\documentclass[12pt]{article}

\input epsf.tex


\usepackage{amsmath}
\usepackage{amsthm}
\usepackage{amsfonts}
\usepackage{amssymb}
\usepackage{graphicx}
\usepackage{latexsym}

\usepackage{amsmath,amsthm,amsfonts,amssymb}

\usepackage{epsfig}



\theoremstyle{plain}
\newtheorem{thm}{Theorem}[section]
\newtheorem{prop}[thm]{Proposition}
\newtheorem{lem}[thm]{Lemma}
\newtheorem{cor}[thm]{Corollary}

\theoremstyle{definition}
\newtheorem{defn}{Definition}
\theoremstyle{remark}
\newtheorem{remark}{Remark}



\topmargin 15pt
\advance \topmargin by -\headheight
\advance \topmargin by -\headsep
\textheight 8.6in
\oddsidemargin 0pt
\evensidemargin \oddsidemargin
\marginparwidth 0.5in
\textwidth 6.5in

\def\sB{{\mathcal B}}
\def\tsB{{{\mathcal B}}}

\def\Block{{\textrm{Block}}}

\def\bphi{{\widetilde \psi}}
\def\tphi{{\tilde \phi}}

\def\sG{{\mathcal G}}

\def\bg{{g_0}}

\def\Gen{{\textrm{Gen}}}

\def\veta{{{\vec \eta}}}
\def\bh{{\bar h}}

\def\sH{{\mathcal H}}

\def\sL{{\mathcal L}}

\def\tM{{ M}}

\def\N{{\mathbb N}}
\def\sN{{\mathcal N}}

\def\Nb{{\textrm{nbhd}}}

\def\sP{{\mathcal P}}

\def\Par{{\textrm{Par}}}

\def\tpi{{\tilde \pi}}

\def\R{{\mathbb R}}

\def\Res{{\textrm{Res}}}

\def\sS{{\mathcal S}}

\def\support{{\textrm{support}}}

\def\chix{{\raise.5ex\hbox{$\chi$}}}

\def\Z{{\mathbb Z}}

\begin{document}
\title{Invariant measures on the space of horofunctions of a word hyperbolic group}
\author{Lewis Bowen}
\begin{abstract}
We introduce a natural equivalence relation on the space $\sH_0$ of horofunctions of a word hyperbolic group that take the value $0$ at the identity. We show that there are only finitely many ergodic measures that are invariant under this relation. This can be viewed as a discrete analog of the Bowen-Marcus theorem. Furthermore, if $\eta$ is such a measure and $G$ acts on a space $(X,\mu)$ by p.m.p. transformations then $\eta \times \mu$ is virtually ergodic with respect to a natural equivalence relation on $\sH_0\times X$. This is comparable to a special case of the Howe-Moore theorem. These results are applied to prove a new ergodic theorem for spherical averages in the case of a word hyperbolic group acting on a finite space. 

\end{abstract}
\maketitle
\noindent
{\bf Keywords}: word hyperbolic, pointwise ergodic, unipotent, Howe-Moore theorem, horofunction, ergodic equivalence relations, equidistribution.\\
{\bf MSC}: 37A20, 37A15, 20F67  \\

\noindent

\section{Introduction}
Let $G$ be a nonelementary word hyperbolic group with symmetric generating set $A$. For $g \in G$ and $n\ge 0$, let $B(g,n)$, $S(g,n)$ denote the ball and sphere of radius $n$ (in the word metric) centered at $g$ respectively. We will write $G \curvearrowright (X,\mu)$ to mean that $(X,\mu)$ is a Borel probability space on which $G$ acts by measure-preserving transformations. This paper proves the following.

\begin{thm}\label{thm:app}
There exists a finite index subgroup $G_0<G$ (depending only on $A$) such that the following holds. Suppose the action $G \curvearrowright (X,\mu)$ is ergodic and that $X$ is {\it finite} (i.e., $X$ can be identified with a finite coset space $G/H$ for some $H<G$ and $\mu$ is the uniform measure). Let $K \subset G$ be any left transversal for $G_0$ in $G$ (so $KG_0=G$ and $|K|=|G/G_0|$). Then for any function $f:X \to \R$ and for any $x \in X$,
\begin{eqnarray*}
\int f d\mu &=& \lim_{n \to \infty} \frac{1}{|K||S(e,n)|} \sum_{g \in S(e,n)}  \sum_{k \in K} f(gkx).
\end{eqnarray*}
\end{thm}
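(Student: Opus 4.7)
The plan is to derive Theorem \ref{thm:app} from the virtual ergodicity result stated in the abstract, by recasting the spherical average as an ergodic average on $\sH_0 \times X$ along the natural equivalence relation.

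First I would encode each sphere element as a horofunction. For $g \in G$ let $h_g \in \sH_0$ denote the normalized distance function $h_g(x) = d(x,g) - d(e,g)$, which vanishes at the identity. This places each $g \in S(e,n)$ as a point of $\sH_0$, and the statement can be reinterpreted as
\[ \lim_{n \to \infty} \frac{1}{|K|\cdot|S(e,n)|} \sum_{g \in S(e,n)} \sum_{k \in K} \tilde f(h_g, gkx) = \int_{\sH_0 \times X} \tilde f\, d(\eta \times \mu), \]
where $\tilde f(h,y) := f(y)$ is independent of the horofunction coordinate and $\eta$ will be an invariant measure on $\sH_0$ assembled from the finitely many ergodic ones.

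Next I would establish sphere equidistribution on $\sH_0$. Let $\nu_n$ be the push-forward of uniform measure on $S(e,n)$ under $g \mapsto h_g$. The claim is that after averaging by $K$, the measures $\nu_n$ converge to an invariant measure $\eta = \sum c_i \eta_i$ under the equivalence relation, where $\eta_1,\ldots,\eta_r$ are the ergodic invariant measures furnished by the paper. The finite-index subgroup $G_0$ enters at this point because it captures a periodicity in the Markov (Cannon) coding of geodesics for $(G,A)$, analogous to the period of an irreducible nonnegative matrix; summation over the left transversal $K$ averages out this periodicity so that the limit exists along the full sequence and not merely along residue classes of $n$.

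Then I would invoke virtual ergodicity of $\eta_i \times \mu$ on $\sH_0 \times X$. Since $X$ is finite and the $G$-action is ergodic, $\mu$ is uniform, and the paper's result decomposes each $\eta_i \times \mu$ into finitely many ergodic components under the equivalence relation. Averaging over the same transversal $K$ on the $X$ coordinate moves between these components, so the combined double sum recovers the full $\eta_i \times \mu$-expectation of $\tilde f$. Because $\tilde f$ depends only on the second coordinate, this expectation is $\int_X f\, d\mu$ for every $i$, and the answer is independent of the convex coefficients $c_i$.

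The main obstacle I expect is the sphere-equidistribution step: showing that the empirical horofunction measures coming from $S(e,n)$ push forward, after averaging by $K$, precisely to the invariant measures on $\sH_0$ classified by the paper. This is the bridge between the combinatorial growth of $G$ and the ergodic-theoretic classification, and it is also where the dependence of $G_0$ on $A$ must arise, since both the Markov structure of sphere growth and the ergodic invariant measures on $\sH_0$ are controlled by the same symbolic coding of $(G,A)$.
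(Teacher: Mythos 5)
Your high-level outline matches the paper's strategy: encode sphere elements as horofunctions $h_g(\cdot)=d(\cdot,g)-d(e,g)$, view the spherical average as integration against empirical measures on $\sH_0 \times X$, and invoke the virtual ergodicity of $\eta \times \mu$. However, your explanation of where $G_0$ and the transversal $K$ come from is incorrect, and you are missing the technical device that makes the argument work. You attribute $G_0$ to the periodicity of the Markov (Cannon) coding — the analogue of the period of an irreducible nonnegative matrix. That periodicity $p$ does appear in the paper (the shift $\alpha$ on $M_R(\sH_0)$ satisfies $\alpha^p\eta=\eta$), but it is already absorbed into the classification of invariant measures and has nothing directly to do with $G_0$ or $K$-averaging; moreover, the sequence $\frac{1}{|K|}\sum_k u_{n,kx}$ is not shown to converge to a single measure, only that every subsequential limit is a product measure, which suffices because $\int f\,d\mu$ does not depend on the $\sH_0$-factor.

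The actual mechanism is different. The paper first reduces to $X=G/N$ with $N\vartriangleleft G$, then introduces a \emph{right} $G$-action $\Phi(g)(h,xN)=(h,xgN)$ on $\Z^G\times G/N$ that commutes with the diagonal left action. Since $\Phi(g)_*$ fixes $\eta\times\mu$ and is weak*-continuous, it permutes the $q\le Q$ ergodic components of $\eta\times\mu$. This produces a homomorphism $G\to \mathrm{Sym}(q)$, and $G_0$ is defined as the intersection of all subgroups of index at most $Q!$, so it lies in the kernel. Hence $\Phi(g)_*$ fixes every ergodic component for $g\in G_0$, and averaging any $R$-invariant $\omega$ over a transversal $K$ for $G_0$ produces a product measure; combined with the fact that all subsequential limits of the $u_n$ lie in $M_R(\sH_0\times X)$, this proves Theorem~\ref{thm:app2} and hence Theorem~\ref{thm:app}. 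Your ``sphere equidistribution'' worry is thus a red herring — the hard part is not equidistribution to a canonical $\eta$, but the commuting-action/permutation argument that guarantees $K$-averaging collapses the empirical measure onto a product. Without identifying $\Phi$ and the role of $Q!$, the proposal cannot locate $G_0$ or explain why summing over $K$ has the desired effect.
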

\begin{cor}\label{cor:app}
Let $G,K$ be as above. Let $({\bar G},\mu)$ be the profinite completion of $G$ (assuming $G$ is residually finite) with Haar probability measure $\mu$. If $f: {\bar G} \to \R$ is continuous and $x \in {\bar G}$ then
\begin{eqnarray*}
\int f d\mu &=& \lim_{n \to \infty} \frac{1}{|K||S(e,n)|} \sum_{g \in S(e,n)}  \sum_{k \in K} f(gkx).
\end{eqnarray*}
\end{cor}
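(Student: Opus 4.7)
The plan is to reduce Corollary \ref{cor:app} to Theorem \ref{thm:app} by uniformly approximating the continuous function $f$ by functions that factor through a finite quotient $G/N$. Since $\bar G$ is a compact totally disconnected group, the clopen normal subgroups of $\bar G$ form a neighborhood basis of the identity, and each such subgroup is the closure $\bar N$ in $\bar G$ of a finite-index normal subgroup $N\le G$; moreover the natural map $\bar G/\bar N \to G/N$ is a $G$-equivariant bijection that carries Haar measure $\mu$ to the uniform measure $\mu_{G/N}$. Given $\epsilon>0$, uniform continuity of $f$ on the compact group $\bar G$ yields such an $N$ with the property that $|f(y)-f(y')|<\epsilon$ whenever $y(y')^{-1}\in\bar N$.

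Next I would produce a companion function on the finite quotient. Writing $\pi\colon \bar G\to G/N$ for the projection, I would set
\[
\tilde f(gN) \;=\; \frac{1}{\mu(\bar N)}\int_{\bar N} f(gn)\,d\mu(n),
\]
so that $|f(z)-\tilde f(\pi(z))|<\epsilon$ for every $z\in\bar G$. Integrating this pointwise bound against $\mu$ and using $\pi_*\mu=\mu_{G/N}$ gives $\bigl|\int f\,d\mu-\int \tilde f\,d\mu_{G/N}\bigr|\le\epsilon$.

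Then I would apply Theorem \ref{thm:app} to the transitive (hence ergodic) action $G\curvearrowright(G/N,\mu_{G/N})$, with test function $\tilde f$ and basepoint $\pi(x)\in G/N$. This yields
\[
\int\tilde f\,d\mu_{G/N} \;=\; \lim_{n\to\infty}\frac{1}{|K|\,|S(e,n)|}\sum_{g\in S(e,n)}\sum_{k\in K}\tilde f(gk\pi(x)).
\]
Since $\pi$ is $G$-equivariant, $\tilde f(gk\pi(x))=\tilde f(\pi(gkx))$, which differs from $f(gkx)$ by less than $\epsilon$. Combining the three $\epsilon$-estimates, the spherical averages of $f$ at $x$ lie within $3\epsilon$ of $\int f\,d\mu$ for all sufficiently large $n$, and letting $\epsilon\to 0$ finishes the argument.

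The substantive input is Theorem \ref{thm:app}; everything else is bookkeeping about the profinite topology. The only step that requires any real care is verifying the profinite-to-finite correspondence used above: that clopen normal subgroups of $\bar G$ form a neighborhood basis of $e$, that $\bar G/\bar N\cong G/N$ as $G$-sets, and that Haar measure pushes forward to the uniform measure under this isomorphism. These are standard consequences of residual finiteness of $G$ and present no genuine obstacle.
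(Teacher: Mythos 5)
Your proposal is correct and takes essentially the same route as the paper: approximate the continuous function $f$ uniformly by functions that factor through a finite quotient $G/N$, apply Theorem \ref{thm:app} to each such approximation, and then pass to the limit with an $\epsilon$-argument. The only cosmetic difference is that the paper obtains the uniform approximation by invoking martingale convergence for the conditional expectations $f_i = \E[f\,|\,\sigma(G/H_i)]$, whereas you obtain it directly from uniform continuity of $f$ on the compact group $\bar G$; since martingale convergence per se gives $L^p$ and a.e. convergence rather than sup-norm convergence, your phrasing is arguably the cleaner justification of the same fact.
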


There are specific cases in which $G_0$ cannot equal $G$. For example, if $G$ is a finitely generated nonabelian free group and $A$ is a free generating set, then consider the action of $G$ on $\Z/2Z$ induced by the homomorphism $G \to \Z/2\Z$, $a \to 1$ for all $a \in A$. $G_0$ cannot be chosen to equal $G$ for this action; consider $f$ to be the indicator function of the set $\{0\}$ to see that the above limit would not converge.

This theorem is implied by the following stronger statement. Let $\Z^G$ be the space of all functions $h:G \to \Z$ with the topology of uniform convergence on finite subsets. $G$ acts on $\Z^G$ in the usual way: $gh:G\to \Z$ is defined by $gh(f)=h(g^{-1}f)$ (for $h\in \Z^G, f, g \in G$). For each $n>0$ define $h_n:G \to \Z$ by $h_{n}(g) = d(g,e)-n$ where $d(\cdot,\cdot)$ denotes distance in the word metric. For $x \in X$, let $u_{n,x}$ be the uniform measure on the collection $\{(gh_n,gx) |\, g\in S(e,n)\}$. We extend this measure to all of $\Z^G \times X$ by setting $u_{n,x}(E)=0$ for all sets $E$ in the complement of this collection.

In general, for a topological space $Z$, let $M(Z)$ denote the space of all Borel probability measures on $Z$ with the weak* topology. Recall this means that a sequence of measures $\{\omega_n\}$ converges to $\omega$ iff for every continuous function $f:Z \to \R$,  $\int f d\omega_n$ converges to $\int f d\omega$. If $Z$ is compact and metrizable then the Banach-Alaoglu theorem implies that $M(Z)$ is compact. Note $u_{n,x} \in M(\Z^G \times X)$ where $X$ has the discrete topology.

\begin{thm}\label{thm:app2}
Let $G_0, K$ be as in the previous theorem. If $G \curvearrowright (X,\mu)$ is ergodic and $X$ is finite then for any $x \in X$, every subsequential weak* limit point of the sequence
$$ \Big\{\frac{1}{|K|}\sum_{k \in K} u_{n,kx}\Big\}$$ is of the form $\eta \times \mu$ for some probability measure $\eta \in M(\Z^G)$.
\end{thm}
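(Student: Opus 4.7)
Let $\nu$ be a weak-$*$ subsequential limit of $\frac{1}{|K|}\sum_{k\in K} u_{n_i,kx}$ along a sequence $n_i \to \infty$. The strategy has three steps: show that $\nu$ is supported on $\sH_0\times X$, show that $\nu$ is invariant under the natural equivalence relation on $\sH_0 \times X$, and then invoke the virtual ergodicity theorem (the Howe--Moore analog alluded to in the abstract) to conclude that $\nu$ has product form. For the first step, observe that for every $g \in S(e,n)$ the function $gh_n$ satisfies $(gh_n)(e) = d(e,g) - n = 0$, so each $u_{n,kx}$ is supported on the closed set $\sH_0 \times X$, where $\sH_0 = \{h\in\Z^G : h(e)=0\}$; hence so is $\nu$.

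For the invariance step, recall that the natural equivalence relation on $\sH_0 \times X$ is generated by the twisted $G$-action $g\cdot(h,y) = (\tilde g h, gy)$, where $\tilde g h(f) := h(g^{-1}f) - h(g^{-1})$ is the cocycle adjustment that preserves $\sH_0$. A direct computation yields $\tilde{g_0}(gh_n) = (g_0 g)\, h_{d(e,\,g_0 g)}$, so $g_0\cdot u_{n,kx}$ is the uniform measure on $\{(g' h_{d(e,g')}, g' k x) : g' \in S(g_0,n)\}$ (after reindexing by $g' = g_0 g$). To deduce $g_0\cdot\nu = \nu$, I would construct an approximate pairing $S(g_0,n) \to S(e,n)$ via the coarse geometry of the hyperbolic group --- for instance, sending each $g'$ to the point of $S(e,n)$ on a geodesic ray from $e$ shadowing $g'$ to infinity. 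Under such a matching the horofunction coordinates agree up to an error that becomes invisible in the weak-$*$ limit, while the $X$-coordinate mismatches are absorbed by the averaging over $K$: the specific choice of $G_0$ and transversal $K$ in the paper should be tailored so that cosets of $G_0$ equidistribute to cancel these discrepancies in the limit.

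Once invariance is established, the proof concludes as follows. Since $\nu$ is $G$-invariant and $G$ acts transitively on the finite set $X$, the $X$-marginal $(\pi_X)_*\nu$ is the uniform measure $\mu$. The virtual ergodicity result then says that the ergodic $G$-invariant measures on $\sH_0 \times X$ having $X$-marginal $\mu$ are exactly of the form $\eta_i \times \mu$, with $\eta_i$ ranging over the finitely many ergodic invariant measures on $\sH_0$. Applying the ergodic decomposition to $\nu$ yields $\nu = \sum_i c_i\, \eta_i \times \mu = \eta \times \mu$ with $\eta := \sum_i c_i \eta_i \in M(\Z^G)$, as required.

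The main obstacle is the invariance step. In a free group (and more generally), the spheres $S(e,n)$ and $S(g_0,n)$ may be disjoint, so the pairing cannot be exact at any finite $n$ and must be established only in the limit. It will rely on both hyperbolic-geometric estimates (fellow-traveler behavior of geodesics, control of Gromov products along spheres) and a delicate verification that the $K$-averaging over $G_0$-cosets exactly compensates the $X$-side mismatch introduced by the pairing. Isolating the correct definition of $G_0$ so that both compensations go through simultaneously is the heart of the matter.
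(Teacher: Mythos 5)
Your proposal contains a genuine gap, and interestingly the part you identify as the main obstacle (the invariance step) is actually the easy part, while the step you dispatch quickly (deducing product form from the ergodic decomposition) is where the proof actually breaks.

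On the invariance step: no ``approximate pairing'' between $S(e,n)$ and $S(g_0,n)$ is needed. Each measure $u_{n,x}$ is \emph{exactly} $R$-invariant, where $R$ is the restriction to $\sH_0\times X$ (or rather $\Z^G_0\times X$) of the orbit relation of the ordinary, untwisted $G$-action on $\Z^G\times X$: if $(gh_n,gx)$ and $(g'h_n,g'x)$ are related by a partial transformation of $R$, both lie in the finite support of $u_{n,x}$ and carry equal mass. So $R$-invariance passes directly to the limit. Note also that $R$-invariance is \emph{not} the same as invariance under the ``cocycle-twisted'' action $\phi(g)h(f)=h(g^{-1}f)-h(g^{-1})$: Lemma~\ref{lem:precise} shows that $R$-invariant $\alpha$-invariant measures are only $\phi$-quasi-invariant, with an explicit Radon--Nikodym derivative. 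Conflating these two notions is already a warning sign.

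The real gap is in your third step. You assert that the ergodic $R$-invariant measures on $\sH_0\times X$ with $X$-marginal $\mu$ are exactly the products $\eta_i\times\mu$, and then decompose $\nu$ accordingly. This is false, and Theorem~\ref{thm:main} says the opposite: $\eta\times\mu$ decomposes into finitely many ergodic pieces $\omega_1,\dots,\omega_q$ which are generally \emph{not} product measures. The free group acting on $\Z/2\Z$ through $a\mapsto 1$ is the counterexample offered in the paper: there $q=2$, yet the free group has (essentially) a unique ergodic $\eta\in M_R(\sH_0)$, so neither $\omega_1$ nor $\omega_2$ can be a product. Consequently $\nu$ need not be a product at all without the $K$-averaging. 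The entire purpose of defining $G_0$ (explicitly: the intersection of all subgroups of index at most $Q!$, where $Q$ is the bound in Theorem~\ref{thm:main}) and averaging over a transversal $K$ is to symmetrize over the permutation action of $G$ on the ergodic components, forcing the average to become a product. You flag this as ``the heart of the matter'' and leave it unresolved, which is a fair self-assessment: it is precisely the missing argument. The paper's route is to reduce to $X=G/N$ with $N$ normal, note that the right-translation action $\Phi(g)$ commutes with the diagonal left action, observe that $\Phi(g)_*$ permutes the ergodic components of $\eta\times\mu$ so that $G_0$ (being in the kernel of the induced map to $S_q$) fixes each component, and conclude that averaging $\Phi(k)_*\omega$ over $k\in K$ reproduces the full $\Phi$-average, which is a product.
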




Theorem \ref{thm:app2} immediately implies the former result. The focus of this paper is on the set of possibilities for $\eta$ and the ergodic decomposition of $\eta \times \mu$ (appropriately defined) in the general case (i.e., $X$ is {\it not} assumed to be finite). To begin, let us consider what the support of $\eta$ could be. It is necessarily contained in the set of all possible limits of sequences of the form $\{g_nh_n\}$ where $g_n \in S(e,n)$. To describe these limits we need more notation.

Let $\Gamma=(G,A)$ be the Cayley graph of $G$. We regard it as a path-metric space by declaring that each edge is isometric to the unit interval. If $h:G \to \Z$ is any function, then $h$ may be extended to all of $\Gamma$ by defining $h(x)=th(v)+(1-t)h(w)$ whenever $x$ is the point on the edge from $v$ to $w$ ($v,w \in G$) such that $d(x,v)=t$. We will not distinguish between $h$ and its extension to $\Gamma$.

If $g_n \in S(e,n)$ and $h_n$ is defined as above then it can be shown (following [CP01, proposition 2.9] ) that every subsequential limit point $h_\infty:G \to \Z$ of the sequence $g_nh_n$ satisfies the following two conditions:
\begin{itemize}
\item $h_\infty$ is {\bf $\epsilon$-convex}, i.e., for all geodesic segments $[x_0,x_1] \subset \Gamma$ and for every $t\in [0,1]$
$$h_\infty(x_t)\le t h_\infty(x_0) + (1-t)h_\infty(x_1)+\epsilon,$$
where $x_t$ is the point on $[x_0,x_1]$ satisfying $|x_0-x_t|=t|x_0-x_1|$ and $\epsilon$ is some positive number. 
\item $h_\infty$ is {\bf distance-like}, i.e., for every $x\in \Gamma$ and every $\lambda \in \R$ with $h_\infty(x)\ge \lambda$
$$h_\infty(x)=\lambda+d(x,h_\infty^{-1}(\lambda)).$$
\end{itemize}
In general, a function $h: \Gamma \to \R$ is an {\bf $\epsilon$-horofunction} on $\Gamma$ if it is $\epsilon$-convex and distance-like. In [CP01, Corollary 4.8] it is proven that any $\epsilon$-horofunction is a $68\delta$-horofunction where $\delta$ is the hyperbolicity constant of $\Gamma$. Let $\sH$ denote the space of all horofunctions with range in the integers. Let $\sH_0 \subset \sH$ denote the compact subspace of horofunctions $h$ satisfying $h(e)=0$. 

 The measure $\eta$ in the above theorem is necessarily supported on $\sH_0$. $\eta$ also has important symmetry properties. To describe these, we recall some definitions from the theory of measured equivalence relations.

Let $Y$ be a Borel space. A Borel equivalence relation $R \subset Y \times Y$ is {\bf discrete} if each of its equivalence classes is countable or finite. A {\bf partial transformation} of $R$ is a Borel bijection $\phi:$ Dom $\phi \to$ Im $\phi $ whose graph is contained in $R$. A measure $\eta$ on $Y$ is {\bf $R$-invariant} if for any partial transformation $\phi$, $\phi_* \eta = \eta$.  We will denote by $M(Y)$ the space of all Borel probability measures on $Y$ and by $M_R(Y)$ the space of all $R$-invariant Borel probability measures.

Given a set $S \subset Y$, the {\bf $R$-saturation} $[S]$ is defined by $[S]:= \{y \in Y | \, (y,s)\in R $ for some $s \in S\}$. $S$ is {\bf $R$-saturated} if $[S]=S$. A measure $\eta$ on $Y$ is {\bf ergodic} if for every $R$-saturated set $S \subset Y$, either $\eta(S)=0$ or $\eta(Y-S)=0$. 

If $G$ is a group acting on $Y$ then the {\bf induced equivalence relation} $R$ on $Y$ is $R=\{(y, gy) \in Y \times Y|\, y \in Y, g \in G\}$. If $Z \subset Y$ then the {\bf restriction of $R$ to $Z$} is the equivalence relation on $Z$ equal to $R \cap Z \times Z$.

Now, the action of $G$ on $\Z^G$ induces an equivalence relation on $\Z^G$ and by restriction, an equivalence relation $R$ on $\sH_0$. The measure $\eta$ in theorem \ref{thm:app2} is in $M_R(\sH_0)$. 

\begin{thm}\label{thm1}
$M_R(\sH_0)$ is nonempty and there are only finitely many ergodic measures in $M_R(\sH_0)$. 
\end{thm}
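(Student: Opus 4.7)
I would establish nonemptiness by an averaging/compactness argument and finiteness by projecting to the Gromov boundary and then controlling the fibers.

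\emph{Nonemptiness.} Consider the uniform probability measure $\mu_n$ on the finite set $\{gh_n : g \in S(e,n)\}$. Each such $gh_n$ satisfies $(gh_n)(e) = h_n(g^{-1}) = d(g^{-1},e) - n = 0$, so $\mu_n$ is supported on $\sH_0$. Because $\sH_0 \subset \Z^G$ is compact metrizable, Banach--Alaoglu yields a weak-$*$ subsequential limit $\eta \in M(\sH_0)$, and the limit is automatically supported on genuine horofunctions by [CP01, Proposition 2.9]. The substance is then to check $R$-invariance of $\eta$. For each $g \in G$, the partial transformation $\phi_g(h) = gh$, defined on $\{h \in \sH_0 : h(g^{-1}) = 0\}$, acts on $\mu_n$ by relabeling the orbit index: its preimage of $\{fh_n : f \in S(e,n)\}$ is supported on $\{fh_n : f \in g^{-1}S(e,n)\}$. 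The relative symmetric difference $|g^{-1}S(e,n) \triangle S(e,n)| / |S(e,n)|$ (after a bounded shift in $n$) tends to zero by Coornaert's growth estimates on spheres in hyperbolic groups, so $\phi_{g,*}\mu_n$ and $\mu_n$ have the same weak-$*$ limit. Passing to the limit gives $\phi_{g,*}\eta = \eta$ for every $g$, i.e., $\eta \in M_R(\sH_0)$.

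\emph{Finiteness, boundary part.} Let $\eta$ be an ergodic element of $M_R(\sH_0)$. I would push $\eta$ forward under the natural map $\pi : \sH_0 \to \partial G$ sending a horofunction to its unique point at infinity (the endpoint of any geodesic ray along which $h$ decreases linearly, which exists and is unique by the distance-like condition combined with $\epsilon$-convexity). Viewing $\sH_0$ as a cross-section for the $G$-action on $\sH$, $R$-invariance of $\eta$ lifts to a $G$-invariant $\sigma$-finite measure on $\sH$, and the pushforward $\pi_*\eta$ is $G$-quasi-invariant on $\partial G$ with Radon--Nikodym cocycle built from the Busemann functions at the critical exponent $\delta$; that is, $\pi_*\eta$ lies in the Patterson--Sullivan conformal class. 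By Coornaert's uniqueness theorem for quasi-conformal densities on word hyperbolic groups, the space of admissible base measures is finite-dimensional with only finitely many extreme rays.

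\emph{Fibers and the role of $G_0$ (main obstacle).} The technical heart of the argument, and what I expect to be the hardest step, is to control the conditional measures on the fibers $\pi^{-1}(\xi) \subset \sH_0$. Each fiber is countable and parametrizes the fine combinatorial data (``altitudes'' and the local shape of level sets) that distinguish horofunctions sharing the same asymptotic direction. The $R$-invariance of $\eta$ constrains the conditional distribution of $\eta$ on $\pi^{-1}(\xi)$ for $\pi_*\eta$-a.e.\ $\xi$, and the plan is to show that these conditionals must come from one of finitely many $G$-invariant ``fiber types.'' The finite index subgroup $G_0 < G$ appearing in Theorem~\ref{thm:app} should arise here as the stabilizer of such a fiber type; finiteness of its index then bounds the number of ergodic lifts of any fixed base measure. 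Combining this fiber rigidity with the Coornaert uniqueness of $\pi_*\eta$ will give the finiteness of ergodic measures in $M_R(\sH_0)$.
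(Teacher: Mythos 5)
There are two genuine gaps. For nonemptiness, you argue that $\phi_{g,*}\mu_n$ and $\mu_n$ have the same weak-$*$ limit because $|g^{-1}S(e,n)\triangle S(e,n)|/|S(e,n)|\to 0$ (after a bounded shift). This is a F{\o}lner-type condition, and it fails: a nonelementary hyperbolic group is nonamenable, so no sequence of spheres (or annuli) is asymptotically invariant. In the free group with a free generating set, for instance, $g^{-1}S(e,n)$ is disjoint from $S(e,n)$ whenever $|g|$ is odd, and for any fixed shift $n\mapsto n\pm m$ the overlap proportion stays bounded away from $1$. The correct (and simpler) observation is that no approximation is needed at all: $\{g h_n : g\in S(e,n)\}$ is \emph{exactly} one $R$-equivalence class inside $\Z^G_0$ (since $g h_n\in\Z^G_0$ iff $g\in S(e,n)$), so the uniform measure $u_n$ is exactly $R$-invariant for every $n$, and any weak-$*$ limit already lies in $M_R(\Z^G_0)$; combining that with the CP01 argument that the limit is supported on horofunctions is the paper's Lemma~\ref{lem:existence}. (You also seem to conflate $R$-invariance with $\phi_{g,*}\eta=\eta$; since $\phi_g$ is only a partial transformation, $R$-invariance asks $\phi_{g,*}(\eta|_{\mathrm{Dom}\,\phi_g})=\eta|_{\mathrm{Im}\,\phi_g}$, not invariance of the whole measure.)

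For finiteness, you assert that $\pi_*\eta$ lies in the Patterson--Sullivan conformal class ``with Radon--Nikodym cocycle built from the Busemann functions at the critical exponent.'' That is precisely the hard part of the theorem, and it is not a formal consequence of $R$-invariance. The pushforward transforms under $G$ via the normalized action $\phi(g)h = gh - (gh)(e)$, and subtracting a constant is not a move inside $R$, so $R$-invariance alone gives no control on $d\phi(g)_*\eta/d\eta$. The paper's machinery in \S\ref{symbolic coding}--\S\ref{sub:qc} --- the block coding, the mass-transport principle, the Patterson--Sullivan comparison of generation growth with sphere growth (Proposition~\ref{prop:generation}), and the Perron--Frobenius argument giving $\alpha^p\eta=\eta$ (Theorem~\ref{thm:eigenvalue}, Corollary~\ref{cor:periodic}) --- exists exactly to compute that cocycle (Lemmas~\ref{lem:precise}, \ref{lem:quasiconformal}); without the periodicity one cannot even form the averaged measures $\nu_i=\frac1p\sum_{j}\alpha^j\eta_i$ used at the end of the proof. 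Finally, the speculation about ``fiber types'' and $G_0$ is neither what the proof does nor what is needed: once the $\pi_*\nu_i$ are mutually equivalent quasiconformal measures, the uniform finite-to-one bound on $\pi$ (Theorem~\ref{thm:CP2}, i.e.\ [CP01, Proposition~5.5]) immediately caps the number of mutually singular ergodic measures. The subgroup $G_0$ plays no role in Theorem~\ref{thm1}; it enters only in Theorems~\ref{thm:app} and~\ref{thm:app2}, where it absorbs a permutation action on the ergodic components of $\eta\times\mu$.
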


This is proven in section 4. For example, if $G$ is a finitely generated free group and $A$ is a free generating set, it can be shown that there is only one $R$-invariant probability measure on $\sH_0$. Indeed, $\sH_0$ can be identified with the boundary $\partial \Gamma$ by the map that associates to $h \in \sH_0$, the unique ``point at infinity'' that equals the limit set of the horosphere $\{g \in G|\, h(g)=0\}$. The unique $R$-invariant probability measure is the Patterson-Sullivan measure on the boundary. I do not know of a single example in which $M_R(\sH_0)$ contains more than one measure.

Suppose now that $G \curvearrowright (X,\mu)$. We do {\it not} assume that $X$ is finite. $G$ acts on $\Z^G \times X$ diagonally. This induces an equivalence relation on $\Z^G \times X$ and by restriction, an equivalence relation on $\sH_0 \times X$. Let $M_R(\sH_0 \times X)$ denote the space of Borel probability measures that are invariant under this relation. The main result of this paper is:

\begin{thm}\label{thm:main}
If $\eta \in M_R(\sH_0)$  and $G$ acts ergodically on $(X,\mu)$ then there exists ergodic measures $\omega_1,...,\omega_q \in M_R(\sH_0 \times X)$ and real numbers $t_i \ge 0$ such that
$$\eta \times \mu = t_1\omega_1 + ... + t_q \omega_q.$$
Moreover, the number $q$ of ergodic components is bounded by a constant $Q$ that depends only on $(G,A)$ and not on $(X,\mu)$.
 \end{thm}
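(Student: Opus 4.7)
The strategy has three stages: reduce to ergodic $\eta$, identify the marginals of the ergodic components, and bound the number of such components using hyperbolic geometry. For the first stage, Theorem \ref{thm1} says that any $\eta \in M_R(\sH_0)$ is a convex combination of at most $P$ ergodic measures for some $P$ depending only on $(G,A)$, so writing $\eta\times\mu$ as the corresponding sum reduces matters to the case of ergodic $\eta$ at the cost of a factor of $P$ in the final bound.

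For the second stage, assuming $\eta$ ergodic, I would apply the ergodic decomposition theorem for the discrete relation $R$ on $(\sH_0 \times X, \eta \times \mu)$ to write $\eta\times\mu = \int \omega_s\, d\rho(s)$ with each $\omega_s$ ergodic and $R$-invariant. Since $R$-invariance of $\omega_s$ descends to $G$-invariance of each of its marginals (and, for a countable group action, $G$-invariance implies $R$-invariance of the marginals), projecting the decomposition to each factor writes $\eta$ and $\mu$ as integrals of invariant probability measures. Ergodicity (extremality) of $\eta$ and $\mu$ together with uniqueness of the ergodic decomposition then forces $(\pi_{\sH_0})_*\omega_s = \eta$ and $(\pi_X)_*\omega_s = \mu$ for $\rho$-a.e.\ $s$, so every ergodic component is a $G$-joining of $(\sH_0,\eta)$ with $(X,\mu)$.

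For the third stage, I must show that only finitely many distinct ergodic joinings appear with positive $\rho$-mass, bounded by a constant depending only on $(G,A)$. The plan is to produce a finite Borel $G$-equivariant partition $\sH_0 = \bigsqcup_{i=1}^Q \sH_0^{(i)}$ modulo $\eta$-null sets, coming from combinatorial-geometric invariants of horofunctions such as apex data (cf.\ \Apex, \altapex, \altdisk), together with the finite quotient suggested by the transversal $K$ in Theorem \ref{thm:app}. The pullback would yield a $G$-invariant partition $\{\sH_0^{(i)} \times X\}$ of $\sH_0 \times X$; each ergodic $\omega_s$ is then concentrated on a single piece, which bounds the number of ergodic components by $Q$.

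The hard step, and where hyperbolicity really enters, is to prove no further splitting occurs inside each piece, i.e.\ that the restriction of $R$ to $\sH_0^{(i)} \times X$ is $(\eta|_{\sH_0^{(i)}})\times\mu$-ergodic. This is the Howe--Moore analogue: for $\eta$-typical $h \in \sH_0^{(i)}$, the orbit $\{gh : g \in G\}$ must equidistribute in $\sH_0^{(i)}$ against every $G$-invariant observable on $X$. I expect to prove this by exploiting $\delta$-hyperbolicity and the distance-like nature of horofunctions: thin-triangle and shadowing estimates should force long orbits to ``align'' with geodesics uniformly, ruling out any additional $R$-invariant splitting coming from the $X$-coordinate. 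Constructing the correct partition and verifying this rigidity is where the bulk of the technical work will lie, and is also where the uniformity of $Q$ in $(X,\mu)$ must be carefully tracked.
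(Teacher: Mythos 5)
Your first two stages are sound in spirit: decomposing $\eta$ into ergodic pieces via Theorem \ref{thm1}, and observing that each ergodic component of $\eta\times\mu$ must be a joining with full marginals, are both correct reductions (the paper itself does something similar, though it reduces to $\alpha$-invariant $\eta$ by averaging $\alpha^i\eta$ rather than to ergodic $\eta$). But your third stage is where the argument breaks down, and it is not a technical gap you could fill by more careful geometry — the proposed mechanism is wrong in principle.

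You suggest producing a finite $G$-equivariant partition $\sH_0=\bigsqcup_{i=1}^Q\sH_0^{(i)}$ and showing that each ergodic $\omega_s$ lives on a single $\sH_0^{(i)}\times X$. This cannot work, because you have just shown in stage 2 that each $\omega_s$ is a joining: its $\sH_0$-marginal is all of $\eta$. So every $\omega_s$ gives full measure to every $\eta$-nonnull subset of $\sH_0\times X$ of the form $E\times X$, and no partition of the $\sH_0$-coordinate can separate the ergodic components. The splitting is genuinely diagonal — in the paper's $\Z/2\Z$ example with a free group, the two components are distinguished by a parity condition that depends jointly on $h$ and $x$. Your plan of forcing ergodicity inside each piece via shadowing/thin-triangle estimates is also chasing the wrong target: no further splitting needs to be ruled out inside a fixed $\sH_0$-piece, because no $\sH_0$-piece separates anything in the first place. (The references to apex data and the macros \verb|\Apex|, \verb|\altapex|, \verb|\altdisk| are red herrings: those are defined in the preamble but never used anywhere in the paper.)

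The paper's actual counting mechanism is of a different nature. The bulk of the work (Sections \ref{sec:part1}--\ref{section:horospherical}) proves Theorem \ref{thm:step1}: any $\lambda\in M_R(\sH_0\times X)$ absolutely continuous to $\eta\times\mu$ is $\alpha^i$-invariant for some bounded $i$. This rests on a key lemma (lemma \ref{lem:key}) whose proof uses coarse-geometric constructions (coarse perpendiculars and isosceles triangles, the automaticity of the geodesic language) — this is where the hyperbolic geometry really enters, not where you placed it. Once the $\omega_i$ are $\alpha$-invariant, the count is obtained by pushing forward to $\partial\Gamma\times X$ through the uniformly finite-to-1 map $\pi$ (theorem \ref{thm:CP2}). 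The measure $\pi_*\eta\times\mu$ there is ergodic by Kaimanovich's double-ergodicity theorem for the Poisson boundary — this is the Howe--Moore analogue — and $\alpha$-invariance makes each $\tpi_*\omega_i$ equivalent to it. Mutually singular $\omega_i$'s carry disjoint sets $E_i$ with $\tpi(E_i)$ of full measure; a common point downstairs has at most $C(G,A)$ preimages, hence $k\le C(G,A)$. The finiteness of the fibers of $\pi$, not a partition of $\sH_0$, is what caps the number of components, and you would need Theorem \ref{thm:step1} (and thus the key lemma) to even begin that argument.
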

By making small modifications to the arguments in this paper, it can be shown that if $G$ is a nonabelian free group and $A$ is a free generating set then the number $q$ of ergodic components in the theorem above is at most equal to $2$. In fact, the action of $G$ on $X=\Z/2\Z$ obtained from the homomorphism $G \to \Z/2\Z$ defined by $a \to 1$ for all $a\in A$, requires that $q=2$. 

 It is interesting to compare this result with the Howe-Moore theorem: if $\sG$ is a semisimple Lie group with finite center, $\sG \curvearrowright (X,\mu)$ and the restriction of this action to any simple noncompact factor of $\sG$ is ergodic then for every subgroup $H<G$ such that $H$ has noncompact closure in $G$, the action of $H$ on $X$ is strongly mixing. In particular, the induced action of any unipotent subgroup is ergodic.

 It is easy to construct ergodic actions of a free group $G$ such that for some infinite subgroup $H<G$, the induced action of $H$ is nonergodic. Hence the straightforward analogue of the Howe-Moore theorem for arbitrary word hyperbolic groups fails. This is not surprising since free groups are far from being semisimple.

In the conclusion section of this paper, we describe a more general framework from which to view these results.

\subsection{History}

Theorem \ref{thm1} can be regarded as a discrete analog of the Bowen-Marcus theorem: there is a unique holonomy-invariant transverse probability measure on the strong unstable foliation of the geodesic flow on a compact manifold with pinched negative curvature [BM77]. Their proof shows that this measure is induced from the well-known Bowen-Margulis measure on the unit tangent bundle which is the measure of maximal entropy of the geodesic flow. An analogue of the geodesic flow for word hyperbolic groups was defined by Gromov and developed by Coornaert and Papadopoulos [CP02]. The present work builds on the related paper [CP01].


Theorem \ref{thm:app} may be regarded as a pointwise ergodic theorem, a mean ergodic theorem or an equidistribution theorem because the three notions coincide when the action space $X$ is finite. The mean ergodic theorem for a free group with respect to a free generating set was first proven by Guivarc'h [Gu69]. Pointwise ergodic theorems for ball and spherical averages for the free group with respect to a free generating set were first proven in [NS94] for all $L^p$ functions with $p>1$. Bufetov gave a very elegant proof which extends to $L\log L$ functions [Bu02] and to all Markov groups satisfying a certain symmetry condition. The only pointwise ergodic theorem in the literature for arbitrary word hyperbolic groups is in [FN98]. There it is proven that if the action of $G$ on $(X,\mu)$ is exponentially mixing then the Cesaro averages of spherical averages of an $L^p$ function ($p>1$) converge pointwise a.e. to the space average.

 In general, there are very few mean or pointwise ergodic theorems known for ball or spherical averages with respect to the {\it word} metric of a {\it discrete} nonamenable group. For example, there are no known mean or pointwise ergodic theorems for ball or spherical averages in the case of the free group $G$ with respect to an {\it arbitrary} symmetric generating set $A$. The continuous case is better understood. Pointwise ergodic theorems for ball averages with respect to a word metric on a connected simple Lie group with finite center are proven in [Ne07] (see also [GN07]). There is an excellent survey article [Ne06] where these theorems (and many other related results) are discussed.

{\bf Acknowledgments}: I would like to thank Amos Nevo for inspiring conversations about pointwise ergodic theorems. I would also like to thank Russ Lyons for introducing me to unimodular networks. These objects are, roughly speaking, another way of formulating graphed measured equivalence relations. I was inspired by [AL07] to think of an analogue of the maximal unipotent subgroup of $SO(n,1)$ for word hyperbolic groups. I'd also like to thank Chris Connell and Lorenzo Sadun for helpful conversations. And I'd like to thank Alex Furman for directing my attention to [Ka03] which is used here in a crucial way. 

\section{Organization}
In \S \ref{sec:prelim} we recall standard definitions regarding word hyperbolic groups. In \S \ref{sec:thm1} we prove theorem \ref{thm1}. That section is outlined separately below. The tools developed in \S \ref{sec:thm1} (especially \S \ref{symbolic coding}) are used in \S \ref{sec:part2} and \S \ref{sec:part1} to prove theorem \ref{thm:main}. There is an important shift operator on $M_R(\sH_0)$ defined in \S \ref{sec:periodicity}. In \S \ref{sec:part1} we prove that every ergodic component of $\eta \times \mu$ is virtually invariant under a related shift operator. The proof relies on a key lemma that is proven separately in \S \ref{section:horospherical}. In \S \ref{sec:part2} we prove of theorem \ref{thm:main} assuming the results of \S \ref{sec:part1}. In \S \ref{sec:app} we prove theorem \ref{thm:app2} and corollary \ref{cor:app}. In the conclusion \S \ref{sec:concl} we present some of the intuitive notions and speculations that led to this paper.

\S \ref{sec:thm1} is the longest section of this paper. We first prove that $M_R(\sH_0)$ is nonempty. In the \S \ref{symbolic coding}, we recall a symbolic coding of $\sH_0$ introduced in [CP01]. This coding is in terms of `blocks'. In \S \ref{mtp}, we discuss the mass-transport principle, which is a tool for computing the values of an $R$-invariant measure. In \S \ref{s1}, we show that any measure $\eta \in M_R(\sH_0)$ is determined by its block densities. In \S \ref{generation}, we use the Patterson-Sullivan theory developed in [Co93] to show that `generation growth' is the roughly the same as the growth of the group. This is used in \S \ref{sec:periodicity}, together with the theory of nonnegative matrices, to conclude that the block densities of a measure $\eta \in M_R(\sH_0)$ form a sequence of eigenvectors of a certain nonnegative matrix. This is then used to show that $\eta$ is virtually invariant under a natural shift-operator. In \S \ref{sub:alpha} this is used to show that $M_R(\sH_0)$ is isomorphic to $M_R(\sH_*)$ where $\sH_*=\{h \in \sH\, | \, h(e)\le 0\}$. In \S \ref{sub:qc}, we show that if $\eta$ is invariant under the shift-operator then its projection to the boundary is quasiconformal. We then use the fact (proven in [Co93]) that quasiconformal measures on the boundary are equivalent to conclude theorem \ref{thm1}.


\section{Word Hyperbolic Groups}\label{sec:prelim}


A detailed discussion of the notion of $\delta$-hyperbolicity and of the associated structures can be found in the seminal work of [Gr87] and in the notes [GdlH90]. Below are listed some of the definitions and properties used later on.

We shall choose the definition based on the {\it Rips condition}: a non-compact complete proper geodesic metric space $\Gamma$ is {\bf $\delta$-hyperbolic} (with $\delta \ge 0$) if each of the sides of any geodesic triangle is contained in the $\delta$-neighborhood of the union of the other two sides (see, for instance, [GdlH90, Proposition 2.21] for a list of other equivalent definitions). The minimal number $\delta$ with this property is the {\bf hyperbolicity constant} of $\Gamma$. A graph is called $\delta$-hyperbolic if the associated $1$-complex with length $1$ edges is $\delta$-hyperbolic. Usually, we shall not be concerned with the precise value of the hyperbolicity constant, and call the above spaces just {\bf hyperbolic}. A discrete group is {\bf word hyperbolic} if one (and hence, any) of its Cayley graphs is hyperbolic.

\subsection{The Hyperbolic Boundary}
Fix a hyperbolic space $\Gamma$ with metric $d$ and hyperbolicity constant $\delta$. Denote by
$$(y|z)_x = \frac{1}{2}[d(x,y)+d(x,z)-d(y,z)], \, x,y,z \in \Gamma$$
the {\bf Gromov product} on $\Gamma$. If $(x_n)$ is a sequence of points in $\Gamma$, then $(x_n)$ {\bf converges at infinity} if $(x_p|x_q)_{x_0}\to\infty$ as $p,q \to \infty$. This does not depend on the choice of $x_0$. Two sequences $(x_n),(y_n)$ are {\bf equivalent} if $(x_n|y_n)_{x_0} \to \infty$ as $n \to \infty$. The boundary of $\Gamma$, denoted by $\partial \Gamma$, is the set of equivalence classes of sequences $(x_n)$ that converge at infinity.

If $\xi \in \partial \Gamma$, then we say that $(x_n)$ converges to $\xi$ if $\xi$ is the equivalence class of $(x_n)$. It is well-known that if $r:[0,\infty)\to \Gamma$ is a geodesic ray, then for every sequence $\{t_n\}$ with $t_n\to\infty$, $r(t_n)$ converges at infinity to some point $\xi$ that depends only on $r$.

\section{$R$-invariant measures}\label{sec:thm1}
From here on, let $G$ be a fixed word hyperbolic group with finite symmetric generating set $A$. Let $\delta$ be an integer that is greater than the hyperbolicity constant of $\Gamma$, the Cayley graph of $G$ with respect to $A$. Here we will prove that $M_R(\sH_0)$ is nonempty. This result and its proof are not used again until section \ref{sec:app}.

\begin{lem}\label{lem:existence}
$M_R(\sH_0)$ is nonempty.
\end{lem}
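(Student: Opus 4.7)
The plan is to construct an element of $M_R(\sH_0)$ as a weak* accumulation point of explicit empirical probability measures. Recall from the introduction the functions $h_n:G\to\Z$ with $h_n(g)=d(g,e)-n$, and observe that for $x \in S(e,n)$ the translate $xh_n$ satisfies $(xh_n)(y)=d(y,x)-n$ at vertices $y\in G$; hence $xh_n$ is Lipschitz-$1$ and vanishes at $e$. Let $\mathcal{L}_0 \subset \Z^G$ denote the compact space of all integer-valued Lipschitz-$1$ functions on $G$ vanishing at $e$, containing $\sH_0$ as a closed subset. Set
$$\mu_n := \frac{1}{|S(e,n)|}\sum_{x\in S(e,n)}\delta_{xh_n} \in M(\mathcal{L}_0),$$
and extract a weak* convergent subsequence $\mu_{n_k}\to \eta \in M(\mathcal{L}_0)$ by Banach--Alaoglu.

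The first step is to show that $\eta$ is supported on $\sH_0$. The result of [CP01, Prop.~2.9] recalled in the introduction says that every accumulation point of a sequence $\{x_n h_n\}$ with $x_n \in S(e,n)$ is an $\epsilon$-horofunction vanishing at $e$, hence lies in $\sH_0$. Therefore, for any open $U\supset \sH_0$ in $\mathcal{L}_0$, one has $\{xh_n : x\in S(e,n)\} \subset U$ for all sufficiently large $n$ (otherwise one could extract an accumulation point in $\mathcal{L}_0\setminus U$), so $\mu_n(U)=1$ eventually and Portmanteau gives $\eta(U)=1$; intersecting over a countable neighborhood base yields $\eta(\sH_0)=1$.

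The second step is to verify $\eta \in M_R(\sH_0)$. Since $R$ is generated by the action of the countable group $G$, any partial transformation of $R$ decomposes into countably many pieces of the form $h\mapsto gh$ restricted to a Borel subset of $A_g := \{h\in\sH_0 : h(g^{-1})=0\}$, so it suffices to check that $\phi_g(h):=gh$ satisfies $(\phi_g)_\ast(\eta|_{A_g}) = \eta|_{gA_g}$ for every $g\in G$, where $gA_g = \{h\in\sH_0 : h(g)=0\}$. Both $A_g$ and $gA_g$ are clopen in $\sH_0$ because the relevant coordinate is locally constant on the Lipschitz-compact space $\sH_0$. The identity already holds at the finite stage: $xh_n \in A_g$ iff $d(gx,e)=n$, and in that case $\phi_g(xh_n)=(gx)h_n$, so as $x$ runs through $S(e,n)\cap g^{-1}S(e,n)$ the image $y=gx$ runs through $gS(e,n)\cap S(e,n) = S(g,n)\cap S(e,n)$, which is precisely the index set of the support of $\mu_n|_{gA_g}$, with equal weights $1/|S(e,n)|$. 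Because $\phi_g$ extends to a homeomorphism of $\Z^G$ and the sets $A_g,gA_g$ are clopen, the equality $(\phi_g)_\ast(\mu_n|_{A_g}) = \mu_n|_{gA_g}$ passes through the weak* limit. The only nontrivial obstacle is the concentration of the limit on $\sH_0$, which is where the horoboundary compactification result of [CP01] is essential; the $R$-invariance then follows immediately from the symmetry $gS(e,n)=S(g,n)$.
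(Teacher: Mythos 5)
Your proof follows the same strategy as the paper's: take a weak* accumulation point of the uniform measures on the $S(e,n)$-translates of $h_n$ inside a compact subset of $\Z^G$, invoke [CP01, Prop.\ 2.9] to place the limit on $\sH_0$, and conclude $R$-invariance from the corresponding identity at the finite stage. The paper is terse here (it simply asserts $u_n \in M_R(\Z^G_0)$ and that $M_R(\Z^G_0)$ is weak*-compact); you fill in the invariance verification explicitly, and your choice of ambient space $\mathcal{L}_0$ of integer Lipschitz-$1$ functions vanishing at $e$ is arguably cleaner than the paper's $\Z^G_0 = \{h:h(e)=0,\ |h(g)|\le d(g,e)\}$, because on $\mathcal{L}_0$ the domain $\{h:h(g^{-1})=0\}$ of $\phi_g$ really is clopen, which is exactly what lets the finite-stage identity pass to the weak* limit.

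Two small repairs are needed. First, the Portmanteau theorem does not give $\eta(U)=1$ from $\mu_n(U)\to 1$ when $U$ is open: for open sets the inequality runs the wrong way, $\eta(U)\le\liminf_n\mu_n(U)$, which is vacuous. Use instead the \emph{closed} $\epsilon$-neighborhoods $\bar U_\epsilon := \{h\in\mathcal{L}_0 : d(h,\sH_0)\le\epsilon\}$: your compactness argument shows $\mu_n(\bar U_\epsilon)=1$ for $n$ large, the closed-set half of Portmanteau gives $\eta(\bar U_\epsilon)\ge\limsup_n\mu_n(\bar U_\epsilon)=1$, and then $\eta(\sH_0)=\lim_{\epsilon\to 0}\eta(\bar U_\epsilon)=1$ by continuity from above since $\sH_0=\bigcap_\epsilon\bar U_\epsilon$. (Equivalently, show $\int f\,d\eta=0$ for every continuous $f\ge 0$ vanishing on $\sH_0$.) Second, you define $A_g$ as a subset of $\sH_0$, but the atoms $xh_n$ of $\mu_n$ need not lie in $\sH_0$, so $\mu_n|_{A_g}$ could be zero; you should state and prove the finite-stage identity for the clopen set $\tilde A_g := \{h\in\mathcal{L}_0 : h(g^{-1})=0\}$, pass to the limit there, and only then restrict to $\sH_0$ using $\eta(\sH_0)=1$. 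With these adjustments the proof is complete.
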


\begin{proof}
Consider the space $\Z^G$ of all functions $h: G\to \Z$ with the uniform topology on compact sets. This space is metrizable but noncompact. So we first identify a nice compact subspace. Let $\Z^G_0 \subset \Z^G$ be the space of functions $h: G\to \Z$ satisfying
\begin{itemize}
\item $h(e)=0$,
\item for all $g \in G$, $|h(g)| \le d(g,e)$.
\end{itemize}
The subspace $\Z^G_0$ is compact and $\sH_0 \subset \Z^G_0$. 

Recall from the introduction the following. $G$ acts on $\Z^G$ in the usual way: $gh(f)=h(g^{-1}f) ~\forall h\in \Z^G, g,f \in G$. This action induces an equivalence relation on $\Z^G$ which restricts to an equivalence relation on $\Z^G_0$. Let $M_R(\Z^G_0)$ denote the space of all Borel probability measures on $\Z^G_0$ that are invariant under all partial transformations of this relation. Because $\Z^G_0$ is compact, $M_R(\Z^G_0)$ is weak* compact.

For each $n>0$ let $h_{n}:G \to \Z$ be defined by $h_{n}(f) = d(f,e)-n$. Observe that if $g \in S(e,n)$, then $gh_n(e)=h_n(g^{-1})=d(g^{-1},e)-n=0$. So $gh_n \in \Z^G_0$. Conversely, if, for some $g \in G$, $gh_n \in \Z^G_0$ then $d(g,e)=d(g^{-1},e)=n$. 

Let $u_n$ be the uniform measure on the collection $\{g h_n |\, g\in S(n)\}$. The above discussion implies that $u_n \in M_R(\Z^G_0)$. By weak* compactness, the sequence $\{u_n\}$ has a subsequential limit point $u_\infty \in M_R(\Z^G_0)$. We claim that $u_\infty \in M_R(\sH_0)$. It suffices to prove that if $\{g_n\}$ is any sequence with $g_n \in S(e,n)$ then every subsequential limit point of the sequence $\{g_nh_n\}$ is an element of $\sH_0$. The proof of this fact is almost identical to the proof in [CP01, proposition 2.9] that every Busemann function is, in fact, a horofunction. We leave the details to the reader.
\end{proof}

\subsection{A symbolic coding of the space of horofunctions}\label{symbolic coding}
In [CP01], an explicit homeomorphism of $\sH_0$ onto a subshift of finite type over the natural numbers was constructed using blocks (which will be defined in this section). Our notation differs from [CP01].

From here on, fix a total ordering of the generating set $A$.

\begin{defn}
For $h\in \sH$ and $g\in G$ let $\Par_h(g) = ga \in G$ where $a \in A$ is the least element of $A$ satisfying $h(ga)=h(g)-1$. Such an element exists by the distance-like property of horofunctions. $\Par_h(g)$ is the {\bf parent} of $g$ with respect to $h$.
\end{defn}

\begin{defn}
Define $\Par:\sH\to \sH$ by $\Par(h)=\Par_h(e)^{-1}h$. $\Par(h)$ is the {\bf parent} of $h$.
\end{defn}

\begin{defn}
Fix integers $H,W>0$. For $h \in \sH$, let
$$\Block(h) = \big\{g \in G|  \, \exists n \in [0, H]\, \textrm{ s.t. } d(\Par^n_h(e),g) \le W \textrm{ and } h(g)=h(\Par^n_h(e))\big\}.$$
We will impose restrictions on the constants $H,W$ after theorem \ref{thm:CPmain} below. 
\end{defn}

\begin{defn}
Let $R_B \subset \sH \times \sH$ be the equivalence relation $(h_1,h_2) \in R_B$ if 
\begin{itemize}
\item $\Block(h_1)=\Block(h_2)$ and
\item there is a constant $C$ such that $h_1(g)=h_2(g)+C$ for all $g \in \Block(h_1)$.
\end{itemize}

\end{defn}

\begin{defn}\label{defn:tsB}
Let $\tsB$ be the set of all $R_B$-equivalence classes (called {\bf blocks}). It is a finite set. By abuse of notation we use $\Block(h)$ to denote the $R_B$-equivalence class of $h$ in $\sH$.

We identify $\tsB$ with the vertex set of a directed graph, also denoted by $\tsB$, as follows. If $a \in A$, $h \in C \in \tsB$, $ah \in B \in \tsB$ and $\Par(h)=ah$ then there is an edge from $B$ to $C$. Observe that if $h \in C$ then there is an $a \in A$ that depends only on $C$ such that $\Par(h)=ah$. Therefore, there is at most one edge from $C$ to $B$. Thus, $\tsB$ does not contain multiple edges.
\end{defn}

For $h \in \sH_0$, let $P(h): \N \to \tsB$ be the reverse-directed path in $\tsB$ given by 
$$P(h)(n)=\Block(\Par^n(h)).$$
Let $\sP$ be the set of all reverse-directed paths $p:\N \to \tsB$. It carries the topology of uniform convergence on finite sets. So it is homeomorphic to a Cantor set.

\begin{thm}[CP01, theorem 8.18]\label{thm:CPmain}
If $W>0$ is sufficiently large and $H>0$ is sufficiently large (how large depends on $W$) then the map $P:\sH_0 \to \sP$ is a homeomorphism.
\end{thm}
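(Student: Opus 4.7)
My plan is to verify the four standard requirements for $P$ to be a homeomorphism, exploiting compactness of $\sH_0$ (already observed as a closed subspace of $\Z^G_0$) and the fact that $\sP$ is Hausdorff, so that continuous bijection automatically upgrades to homeomorphism.

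First, the continuity of $P$ is essentially built into the definitions. For each fixed $n$, the value $P(h)(n)$ is determined by the restriction of $h$ to a bounded region: one needs $n$ applications of $\Par_h$ to locate the basepoint $\Par^n_h(e)$, and then the block depends only on the values of $h$ within distance $W$ of a parent-path of length $H$ starting at that basepoint. Since the $\Par$-function at $g$ depends only on $h$ on the ball $B(g,1)$, the point $\Par^n(h)(e)$ lies in $B(e,n)$ and the relevant window is contained in $B(e,n+H+W)$. Convergence in the topology of uniform convergence on compact sets therefore implies convergence of each coordinate of $P$, and $P$ is continuous.

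The crux is injectivity, which forces the lower bound on $W$. I would argue that the parent-path from $e$ and its $W$-thickening cover all of $G$ in the following sense: from any vertex $g \in G$, repeated application of $\Par_h$ eventually produces a vertex within distance $O(\delta)$ of some $\Par^n_h(e)$, because any two distance-like functions that share a horosphere stay within $O(\delta)$ of each other and their parent rays fellow-travel. Hence, once $W$ exceeds the relevant constant (some explicit multiple of $\delta$ produced by the thin-triangles estimate applied to the $68\delta$-convexity from [CP01, Cor.~4.8]), every $g$ lies in $\Block(\Par^n_h(e))$ for some $n$. The blocks thus determine $h$ up to an additive constant on each block, but the overlap between $\Block(\Par^n(h))$ and $\Block(\Par^{n+1}(h))$ — which is forced to be nonempty by taking $H>W$ generously — pins down the constants relative to each other, and the normalization $h(e)=0$ fixes the global constant.

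For surjectivity, given $p \in \sP$ I would build $h$ inductively by gluing the representative blocks along the reverse-directed path. Each edge $p(n+1)\to p(n)$ carries a canonical generator $a\in A$ (from Definition \ref{defn:tsB}), which dictates how $\Block(\Par^n(h))$ sits inside $\Block(\Par^{n+1}(h))$ after shifting by $a$, and the overlap region forces a unique additive normalization. One then checks that the resulting $h: G \to \Z$ is $\epsilon$-convex and distance-like: distance-likeness follows because each finite-range behaviour is witnessed inside some block, and $\epsilon$-convexity for some $\epsilon$ comparable to $\delta$ is again a block-local condition, so being true on every block implies it globally. Finally, $P(h)=p$ by construction, and continuity of $P^{-1}$ is automatic from compactness of $\sH_0$ and Hausdorffness of $\sP$. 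The main obstacle I foresee is the careful parameter bookkeeping: $W$ must be taken large enough (in terms of $\delta$) for blocks to cover $G$ along parent-paths, and then $H$ must be taken large enough relative to $W$ for consecutive blocks to overlap in a region rich enough to propagate the additive normalization — exactly the regime asserted in the statement.
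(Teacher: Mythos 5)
This statement is cited from [CP01, Theorem~8.18]; the paper you are reading does not prove it but simply invokes it (with the remark that explicit bounds on $H$ and $W$ in terms of $\delta$ are given in [CP01]), so there is no in-paper proof to compare your proposal against. I will therefore assess your sketch on its own merits.

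Your general framework (continuous bijection from a compact space to a Hausdorff space is a homeomorphism) is correct, and your continuity argument is fine: $P(h)(n)$ depends only on $h$ restricted to $B(e,n+H+W)$, so $P$ is continuous in the product topology. Your injectivity outline is also aligned with the geometry used elsewhere in the paper --- in particular it amounts to [CP01, Proposition~4.4] (restated here as Proposition~\ref{prop4.4}), which says that once you know $h$ on a $16\delta$-thick tube around a gradient ray, you know $h$ everywhere; Lemma~\ref{lem:determined} in the paper is a cleaner packaging of this and you could invoke it directly to get injectivity once $W\geq 16\delta$ and $H$ is large enough for consecutive blocks to overlap.

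The genuine gap is in your surjectivity step. You assert that distance-likeness ``follows because each finite-range behaviour is witnessed inside some block'', but distance-likeness is not a finite-range condition: it requires $h(x)=\lambda+d\bigl(x,h^{-1}(\lambda)\bigr)$ for every $\lambda\leq h(x)$, and the level set $h^{-1}(\lambda)$ may be far from $x$ and from any fixed block. To establish this for the glued-together function you need to show that between any point $x$ and its distant level set there exists a gradient-like path descending at unit speed, which in turn forces you to control how the glued blocks fit together geometrically, not merely combinatorially along the abstract path in $\tsB$. Similarly, $\epsilon$-convexity must be checked for geodesics whose endpoints lie in different blocks. In [CP01] these checks occupy most of sections~7--8 and require the careful interplay between the thin-triangle constant, $W$, and $H$ that you flag as ``bookkeeping''; it is not bookkeeping but the substance of the theorem, and the sketch as written does not close that gap.
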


In [CP01], explicit bounds for $H,W$ are given that depend only on the hyperbolicity constant $\delta$. Now fix constants $H,W$ so that $H > W + 16\delta > 32\delta+100$ and such that the above theorem is true for $H,W$. We will show that if we know the block type of some $h \in \sH$, then we know all of the block types of the ``children'' of $h$. This will require some ideas from [CP01] which we recall next. 
\begin{defn}
 If $h \in \sH$ and $r:I \subset \R \to \Gamma$ is a path parametrized by arclength such that $h(r(t))-h(r(t'))= t'-t$ for every $t, t' \in I$, then $r$ is called an {\bf $h$-gradient arc}. If $I=[0,\infty)$ then $r$ is called an {\bf $h$-gradient ray}. Because of the distance-like property of horofunctions, for every $h \in \sH$ and every $g \in G$ there exists an $h$-gradient ray with $h(0)=g$.
\end{defn}

Proposition 3.3 of [CP01] implies that $h$-gradient arcs are geodesics. Thus, if $r$ is a $h$-gradient ray, there exists a unique point $r(\infty)$ on the boundary at infinity such that $r(t)\to r(\infty)$ (as $t\to\infty$) in the natural topology on $\Gamma \cup \partial \Gamma$.

\begin{defn}
 Proposition 4.1 of [CP01] implies that for any two $h$-gradient rays $r_1,r_2$, $r_1(\infty)=r_2(\infty)$. Therefore, we may define $\pi(h)=r(\infty)$ for any $h$-gradient $r$. It is called the {\bf point at infinity for $h$}. 
\end{defn}

The next result is proposition 4.4 of [CP01] adapted to the notation here.

\begin{prop}[CP01, proposition 4.4] \label{prop4.4}
Let $h \in \sH$ and $n\ge 0$. Let $r:[-n,\infty) \to \Gamma$ be a geodesic ray such that $r(\infty)=\pi(h)$. For $t \ge -n$, let
$$R_t=\big\{g \in G \, : \, h(g)=h(r(t))\big\} \cap B\big(r(t),16\delta\big)$$
where $\delta$ is the hyperbolicity constant of $(G,A)$. Then, 
$$h(g)-h(r(t)) = d(g,R_t)$$
for all $g \in G$ and for all $t$ with $t>d(g,r(-n)) -n + 16\delta$. 
\end{prop}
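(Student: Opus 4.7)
The plan is to establish the equality as two inequalities. The upper bound $h(g) - h(r(t)) \le d(g,R_t)$ is essentially automatic: every $g' \in R_t$ satisfies $h(g') = h(r(t))$, and the distance-like property (with $\lambda = h(r(t))$) yields the 1-Lipschitz estimate $h(g) - h(g') \le d(g,g')$, so taking the infimum over $g' \in R_t$ gives the bound. Observe that the hypothesis $t > d(g,r(-n)) - n + 16\delta$ together with 1-Lipschitzness forces $h(g) - h(r(t)) \ge (t+n) - d(g,r(-n)) > 16\delta > 0$, so the quantity $s_0 := h(g) - h(r(t))$ is strictly positive and the distance-like property can legitimately be applied with $\lambda = h(r(t))$.

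For the reverse inequality, the plan is to exhibit a single point $g^\ast \in R_t$ with $d(g, g^\ast) \le s_0$. I would take an $h$-gradient ray $\gamma : [0,\infty) \to \Gamma$ based at $g$; by Proposition 3.3 of [CP01] this is a geodesic, and $\gamma(\infty) = \pi(h)$ by definition of $\pi(h)$. Setting $g^\ast := \gamma(s_0)$ gives $h(g^\ast) = h(g) - s_0 = h(r(t))$ and $d(g, g^\ast) = s_0$, so the entire proof reduces to verifying the geometric claim $g^\ast \in B(r(t), 16\delta)$.

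To see the latter, I would apply the thin-triangle property to the ideal triangle with vertices $g$, $r(-n)$ and $\pi(h) \in \partial\Gamma$, whose sides are the geodesic segment $[g,r(-n)]$, the gradient ray $\gamma$, and the ray $r|_{[-n,\infty)}$. The hypothesis rewrites as $d(r(-n), r(t)) = t + n > d(g,r(-n)) + 16\delta$, which exceeds the length of the finite side $[g,r(-n)]$; this forces $r(t)$ to lie within a small multiple of $\delta$ of some point $\gamma(s_1)$ on $\gamma$, rather than near $[g,r(-n)]$. The 1-Lipschitz property of $h$ then gives $|s_0 - s_1| = |h(\gamma(s_0)) - h(\gamma(s_1))| = O(\delta)$, and the triangle inequality yields $d(g^\ast, r(t)) \le |s_0 - s_1| + d(\gamma(s_1), r(t)) = O(\delta)$, which can be arranged to be at most $16\delta$ with a little room to spare. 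The main technical obstacle is handling the ideal vertex $\pi(h)$, since the standard thin-triangle bound refers to finite triangles; the usual workaround is to truncate by replacing $\pi(h)$ with points $\gamma(T) = r(T')$ at a common very negative $h$-level (which exist because the two rays converge to $\pi(h)$ and asymptotically fellow-travel), apply the ordinary thin-triangle estimate, and pass to the limit as $T, T' \to \infty$. Careful tracking of the hyperbolicity constants throughout is what produces the precise bound $16\delta$ advertised in the proposition.
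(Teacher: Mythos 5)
The paper does not prove this proposition; it is quoted from [CP01, Proposition 4.4] and used as a black box, so there is no in-paper argument to compare against. On its own terms your plan is the natural one---upper bound by the 1-Lipschitz consequence of distance-likeness, lower bound by dropping $g^\ast=\gamma(s_0)$ onto the $h$-gradient ray from $g$ and running a thin-triangle argument---and is very likely the shape of the CP01 proof. But two quantitative holes remain. First, your assertion that ``1-Lipschitzness forces $h(g)-h(r(t))\ge(t+n)-d(g,r(-n))$'' is wrong as written: it silently uses $h(r(-n))-h(r(t))=t+n$, which holds when $r$ is an $h$-gradient ray but is not assumed here ($r$ is only a geodesic with $r(\infty)=\pi(h)$, and 1-Lipschitzness by itself gives only the opposite-direction bound $h(r(-n))-h(r(t))\le t+n$). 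What rescues the step is fellow-traveling: an $h$-gradient ray emanating from $r(-n)$ also converges to $\pi(h)$, so by [CP01, Prop.\ 1.2] the ray $r$ stays within $4\delta$ of it, giving $h(r(-n))-h(r(t))\ge t+n-4\delta$ and hence $s_0 > 12\delta > 0$. Your conclusion survives, but the step as written does not, and it must be rephrased to use this comparison. Second, the entire burden of the lower bound is buried in ``careful tracking of the hyperbolicity constants'': you need the ideal triangle $(g, r(-n), \pi(h))$ to be $\delta''$-thin with $\delta'' < 16\delta$ to rule out $r(t)$ landing near $[g,r(-n)]$, and you need the combined bound $d(\gamma(s_0),r(t)) \le |s_0-s_1| + d(\gamma(s_1),r(t)) \le 2\delta''$ to come out $\le 16\delta$, i.e.\ $\delta'' \le 8\delta$. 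That the ideal-triangle constant of a $\delta$-hyperbolic space satisfies this is plausible but nowhere verified in your write-up, and it is exactly the place where the numerical $16\delta$ in the statement could fail if the constants disagree. Neither gap is conceptual, but both must be closed before this is a proof.
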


\begin{lem}\label{lem:determined} 
Suppose $h_1,h_2 \in \sH$, $h_1(e)=h_2(e)$ and $\Block(h_1)=\Block(h_2)$. Let $r:(-\infty,\infty)\to\Gamma$ be an $h_1$-gradient line with $r(0)=e$. Then for every $g \in G$ and $n\ge 0$, if $d(g,r(-n)) \le n$ then $h_1(g)=h_2(g)$. In particular, $r$ restricted to $(-\infty,H]$ is an $h_2$-gradient and $\Block(r(-n)^{-1}h_1)=\Block(r(-n)^{-1}h_2)$ for all $n\ge 0$. Thus the block type of $h_1$ determines the block type of all of its ``children''.
\end{lem}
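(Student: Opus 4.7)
The plan is to use the block-equality hypothesis together with Proposition 4.4 (CP01) to propagate pointwise agreement of $h_1$ and $h_2$ outward from the block. Since $e\in\Block(h_1)=\Block(h_2)$ (as $e=\Par^0_{h_i}(e)$) and $h_1(e)=h_2(e)$, the additive constant in the $R_B$-relation is forced to be zero, so $h_1\equiv h_2$ on the block. I would then show that $r(t)\in\Block(h_1)$ for every integer $t\in[0,H]$: the ray $r|_{[0,\infty)}$ and the piecewise-linear path through the parent chain $e,\Par_{h_1}(e),\Par^2_{h_1}(e),\ldots$ are both $h_1$-gradient rays starting at $e$, so both converge to $\pi(h_1)$ by CP01 Proposition 4.1, and two geodesic rays in a $\delta$-hyperbolic space with a common initial point and common limit fellow-travel within $2\delta$. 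Hence $d(r(t),\Par^t_{h_1}(e))\le 2\delta\le W$, and since $h_1(r(t))=h_1(\Par^t_{h_1}(e))$, we have $r(t)\in\Block(h_1)$. This forces $h_2(r(t))=h_1(r(t))=h_2(e)-t$, so $r|_{[0,H]}$ is an $h_2$-gradient arc and the parent chains coincide: $\Par^t_{h_1}(e)=\Par^t_{h_2}(e)$ for $t\in[0,H]$.

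Next, fix an integer $T\in(16\delta,H]$, say $T=16\delta+1$ (allowed because $H>32\delta+100$). For $g$ with $d(g,r(-n))\le n$, Proposition 4.4 applied to $h_1$ and the geodesic ray $r|_{[-n,\infty)}$ yields
\[
h_1(g) = h_1(r(T)) + d(g, R^{h_1}_T), \qquad R^{h_1}_T = \{g'\in G : h_1(g')=h_1(r(T))\}\cap B(r(T),16\delta),
\]
since the hypothesis $T>d(g,r(-n))-n+16\delta$ is immediate. To obtain the analogous formula for $h_2$, I would construct a geodesic ray $\tilde r:[-n,\infty)\to\Gamma$ converging to $\pi(h_2)$ with $\tilde r|_{[-n,H]}=r|_{[-n,H]}$, by concatenating $r|_{[-n,H]}$ with any $h_2$-gradient ray issued from $r(H)$. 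The forward piece extends $r|_{[0,H]}$ into a single $h_2$-gradient arc, hence a geodesic by CP01 Proposition 3.3; the backward piece is a segment of the geodesic line $r$; granting that the concatenation is a genuine geodesic at the junction $r(H)$, Proposition 4.4 applied to $h_2$ and $\tilde r$ gives
\[
h_2(g) = h_2(r(T)) + d(g, R^{h_2}_T), \qquad R^{h_2}_T = \{g'\in G : h_2(g')=h_2(r(T))\}\cap B(r(T),16\delta).
\]

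The lemma then follows by matching the two formulas. Their scalar parts agree since $r(T)\in\Block$ gives $h_1(r(T))=h_2(r(T))$. For the sets, any $g'\in R^{h_1}_T$ satisfies $d(g',\Par^T_{h_1}(e))\le 16\delta+2\delta\le W$ and lies at the correct $h_1$-level, so $g'\in\Block(h_1)=\Block(h_2)$; then $h_2(g')=h_1(g')=h_1(r(T))=h_2(r(T))$, i.e.\ $g'\in R^{h_2}_T$. The reverse inclusion is symmetric, using $\Par^T_{h_1}(e)=\Par^T_{h_2}(e)$. Hence $h_1(g)=h_2(g)$. Taking $g=r(-n)$ gives $h_2(r(-n))=h_2(e)+n$, so combined with the first paragraph $r|_{(-\infty,H]}$ is an $h_2$-gradient; the block identity $\Block(r(-n)^{-1}h_1)=\Block(r(-n)^{-1}h_2)$ follows because each such block depends only on the values of $h_i$ in a bounded neighborhood of $r(-n)$, which lies in the region where the main claim has already produced $h_1=h_2$. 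The main obstacle is the verification in paragraph two that $\tilde r$ is a genuine geodesic at its junction: its two pieces are each geodesics, but in a $\delta$-hyperbolic space the concatenation of two geodesics is only automatically a $(1,O(\delta))$-quasi-geodesic, so a careful use of hyperbolicity (and of the fact that $r$ and the $h_2$-gradient extension share a long common initial segment on $[0,H]$) is needed to exclude any shortcut across $r(H)$.
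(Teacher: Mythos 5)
Your approach is essentially the paper's: apply CP01 Proposition 4.4 at a fixed level $t$ just above $16\delta$, and match the formulas for $h_1$ and $h_2$ using the block equality. You have also correctly worked out several details the paper leaves implicit (that $r(t)\in\Block(h_1)$ for $t\in[0,H]$ via fellow-traveling of $h_1$-gradient rays, and that the $R_B$-relation with $h_1(e)=h_2(e)$ forces $h_1\equiv h_2$ on the block). More importantly, you have put your finger on a real subtlety the paper silently glosses over: CP01 Proposition 4.4 is applied to $h_2$, which requires a geodesic ray converging to $\pi(h_2)$, and nothing establishes a priori that $r(\infty)=\pi(h_1)$ equals $\pi(h_2)$. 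Your fix --- using for $h_2$ a ray $\tilde r$ that follows $r$ on $[-n,H]$ and then follows an $h_2$-gradient ray from $r(H)$ --- is the right idea.

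The gap you flag, however, cannot be closed the way you suggest. Hyperbolicity alone cannot force a concatenation of two geodesics to be a genuine geodesic; since $(r(-n)\,|\,\tilde r(t))_e \le \delta$ is the best one gets, the conclusion is only $d(r(-n),\tilde r(t)) \ge n + t - 2\delta$, i.e.\ a $(1,O(\delta))$-quasi-geodesic, which is exactly what you predicted and exactly not enough for Proposition 4.4. The clean way to close the gap is an induction on $n$, using that gradient arcs are geodesics. At stage $n$ one knows from the inductive hypothesis that $r|_{[-(n-1),H]}$ is an $h_2$-gradient; since $r|_{[0,H]}$ extends to an $h_2$-gradient ray $\rho$ from $r(H)$, the concatenation of $r|_{[-(n-1),H]}$ with $\rho$ is itself an $h_2$-gradient ray on $[-(n-1),\infty)$, hence automatically a genuine geodesic ray converging to $\pi(h_2)$ (CP01 Proposition 3.3). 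Applying Proposition 4.4 with this ray and $t=16\delta+2$ at the point $g=r(-n)$ (which satisfies $d(g,r(-(n-1)))=1$) gives $h_2(r(-n))=h_1(r(-n))=h_2(e)+n$, so $r|_{[-n,H]}$ is an $h_2$-gradient, completing the inductive step; and for a general $g$ with $d(g,r(-n))\le n$ one then re-applies Proposition 4.4 with the ray based at $r(-n)$ to conclude $h_1(g)=h_2(g)$. This inductive circularity-breaking step is what is missing from both your argument and, in effect, from the paper's terse proof, so it is worth making explicit.
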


\begin{proof}
Because $\Block(h_1)=\Block(h_2)$ and $H\ge 16\delta +2$, $h_1(r(16\delta+2))=h_2(r(16\delta+2))$. For $i=1,2$ let 
$$R^i = \big\{f \in G|\, d(f,r(16\delta+ 2)) \le 16\delta\textrm{ and } h_i(f)=h_i(r(16\delta+2))\big\}.$$
Since $\Block(h_1)=\Block(h_2)$, $R^1=R^2$. 

Let $n\ge 0$. Let $g \in G$ be such that $d(r(-n),g)\le n$. The previous proposition implies
$$h_1(g)=h_1(r(16\delta+ 1)) + d(g,R^1) = h_2(r(16\delta+ 1)) + d(g,R^2) = h_2(g).$$
In particular, if $g=r(-n)$ then $h_2(g)=h_1(g)$. This shows that $r$ restricted to $[-n,H]$ is an $h_2$-gradient. Since $n$ is arbitrary this completes the proof.
\end{proof}

\subsection{The Mass Transport Principle}\label{mtp}

\begin{prop}[The Mass Transport Principle]\label{prop:mtp}
Suppose $(Y,\mu)$ is a Borel probability space, $R \subset Y\times Y$ is a discrete Borel equivalence relation and $\mu$ is $R$-invariant. Let $F:R \to \R$ be any Borel map. Then
$$\int \Big(\sum_{y_2} F(y_1,y_2) \Big) d\mu(y_1) = \int \Big(\sum_{y_1} F(y_1,y_2) \Big) d\mu(y_2).$$
\end{prop}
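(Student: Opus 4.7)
The plan is to reduce the general identity to the case where $F$ is supported on the graph of a single partial transformation, and then invoke the $R$-invariance of $\mu$. First I would decompose the equivalence relation as a countable disjoint union
$$R = \bigsqcup_{n \in \N} \Gamma(\phi_n)$$
of graphs of partial Borel bijections $\phi_n : D_n \to I_n$. Such a decomposition is a standard consequence of the Lusin--Novikov uniformization theorem applied to the countable-to-one Borel projection $R \to Y$; equivalently, one may invoke the Feldman--Moore theorem, which represents $R$ as the orbit equivalence relation of a countable group of Borel automorphisms, and then disjointify the graphs of its generators. The existence of this decomposition is the one nontrivial ingredient, and I would cite it rather than reprove it.

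Next I would set $F_n(y_1,y_2) := F(y_1,y_2)\,\mathbf{1}_{\Gamma(\phi_n)}(y_1,y_2)$, so that $F = \sum_n F_n$ pointwise on $R$. For each fixed $y_1$, the inner sum on the left-hand side of the claimed identity collapses to $F_n(y_1,\phi_n(y_1))$ when $y_1 \in D_n$ and to $0$ otherwise; symmetrically for the right-hand side. Thus for each $n$ the identity reduces to
$$\int_{D_n} F_n(y_1,\phi_n(y_1))\, d\mu(y_1) \;=\; \int_{I_n} F_n(\phi_n^{-1}(y_2),y_2)\, d\mu(y_2),$$
which is just the change-of-variables formula for $\phi_n$; it holds because the defining property of $R$-invariance supplies $(\phi_n)_*(\mu|_{D_n}) = \mu|_{I_n}$.

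Finally I would sum over $n$. For $F \ge 0$, both sides equal a common element of $[0,\infty]$ by Tonelli and monotone convergence, and no additional hypothesis is needed. For signed $F$, one first applies the nonnegative version to $|F|$ to verify absolute convergence (i.e.\ the implicit integrability condition $\int \sum_{y_2}|F(y_1,y_2)|\, d\mu(y_1) < \infty$), and then applies the nonnegative result separately to $F^+$ and $F^-$ and subtracts. The main obstacle, if one demands full rigor, is producing the disjoint Borel decomposition of $R$; once that is in hand the rest is bookkeeping with change of variables and Fubini.
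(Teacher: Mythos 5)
Your proposal is correct and follows essentially the same route as the paper: decompose $R$ into a countable disjoint union of graphs of partial transformations (via Feldman--Moore / Lusin--Novikov), reduce each summand to the change-of-variables identity supplied by $R$-invariance, and sum. The only difference is that you decompose $R$ itself and are explicit about the Tonelli/monotone-convergence and signed-$F$ bookkeeping, whereas the paper first reduces to $F=\chi_E$ and decomposes the set $E$, leaving the convergence details implicit.
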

This principle was introduced in [Ha97] and developed further in [BLPS99].
\begin{proof}
It suffices to prove the proposition in the special case in which $F$ is the characteristic function of a Borel set $E \subset R$. Since $R$ is a discrete equivalence relation, there exists an at most countable collection $\{\phi_i\}_{i=1}^N$ of partial transformations such that $E$ is the disjoint union of the graphs of the $\phi_i$. Here $N$ is allowed to equal $\infty$. This follows, for example, from the Feldman-Moore theorem [FM77] that every discrete Borel equivalence relation is generated by the action of a countable group. 

It now suffices to prove the result in the special case in which $F=\chi_E$ and $E$ is the graph of $\phi$, a partial transformation of $R$. In this case, the left hand side of the above equation equals $\mu($dom $\phi)$ and the right hand side equals $\mu($rng $\phi)$. Since $\mu$ is $R$-invariant, $\mu($dom $\phi)=  \mu($rng $\phi)$.
\end{proof}

\begin{cor}\label{cor:mtp}
If $f:Y \to Y$ is a finite-to-1 Borel map whose graph is contained in $R$ and $E \subset Y$ is Borel then
$$\mu(E) = \int |f^{-1}(y) \cap E| d\mu(y).$$ 
\end{cor}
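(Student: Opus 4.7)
The plan is to deduce the corollary as a direct application of the Mass Transport Principle (Proposition \ref{prop:mtp}) with a carefully chosen integrand $F$. First I would set
$$E' := \{(y, f(y)) \in Y \times Y \,:\, y \in E\},$$
which is a Borel subset of $R$ since $E$ is Borel and the graph of $f$ is contained in $R$. Then I would define $F : R \to \R$ by $F := \chi_{E'}$, i.e., $F(y_1,y_2) = 1$ precisely when $y_1 \in E$ and $y_2 = f(y_1)$, and zero otherwise.

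Next I would compute the two marginals that appear in the Mass Transport Principle. For a fixed $y_1$, the sum $\sum_{y_2} F(y_1,y_2)$ has at most one nonzero term (namely $y_2 = f(y_1)$), so it equals $\chi_E(y_1)$; integrating gives $\mu(E)$. For a fixed $y_2$, the sum $\sum_{y_1} F(y_1,y_2)$ counts those $y_1 \in E$ with $f(y_1) = y_2$, so it equals $|f^{-1}(y_2) \cap E|$; this is a finite integer by the finite-to-1 assumption on $f$, which ensures the sum is well-defined and makes the integrand on the right-hand side a measurable $\N$-valued function. Applying Proposition \ref{prop:mtp} to $F$ then yields
$$\mu(E) \;=\; \int \chi_E(y_1)\, d\mu(y_1) \;=\; \int \Big(\sum_{y_2} F(y_1,y_2)\Big) d\mu(y_1) \;=\; \int \Big(\sum_{y_1} F(y_1,y_2)\Big) d\mu(y_2) \;=\; \int |f^{-1}(y) \cap E|\, d\mu(y).$$

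There is essentially no serious obstacle here beyond bookkeeping: the finite-to-1 hypothesis is exactly the ingredient needed to guarantee that the inner sum on the right is finite (and hence that Borel measurability passes to the integrand), and the hypothesis that the graph of $f$ lies in $R$ is exactly what allows $F$ to be viewed as a function on $R$ so that Proposition \ref{prop:mtp} applies. If one wished to avoid any concern about summability in the statement of Proposition \ref{prop:mtp}, one could instead apply the principle to $F_N := \chi_{E'} \cdot \chi_{\{|f^{-1}(y_2)| \le N\}}$ and take $N \to \infty$ by monotone convergence, but this refinement is unnecessary since $F$ is already bounded by $1$ and the right-hand sum is everywhere finite by hypothesis.
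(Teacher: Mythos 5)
Your proposal is correct and is essentially identical to the paper's proof: the paper also applies the Mass Transport Principle to the function $F(y_1,y_2)=1$ exactly when $y_1\in E$ and $y_2=f(y_1)$, and reads off the two marginals. Your remarks on measurability and finiteness are harmless extra bookkeeping that the paper leaves implicit.
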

\begin{proof}
Apply the mass transport principle to the function $F$ defined by $F(y_1,y_2)=1$ if $f(y_1)=y_2$ and $y_1 \in E$; $F(y_1,y_2)=0$ otherwise.
\end{proof}

\subsection{Every invariant measure is determined by its block densities}\label{s1}

 For $k \in \Z$, let $\sH_k = \{h \in \sH|\, h(e)=-k\}$. For $I \subset \Z$, let $\sH_I = \{h \in \sH | -h(e) \in I\}$. For any $I \subset \Z$, the equivalence relation on $\sH$ induced by the action of $G$ restricts to an equivalence relation on $\sH_I$. Let $M_R(\sH_I)$ denote the space of relation-invariant Borel probability measures on $\sH_I$. 

For every finite subset $I\subset \Z$  containing $0$, there is a natural {\bf restriction map} $\Res:M_R(\sH_I)\to M_R(\sH_0)$ obtained by restricting $\eta \in M_R(\sH_I)$ to $\sH_0$ and normalizing.

\begin{lem}\label{restriction}
$\Res$ is an isomorphism.
\end{lem}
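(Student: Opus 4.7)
The proof splits into injectivity and surjectivity, both driven by the mass transport principle applied to finite-to-one Borel maps whose graphs lie in $R$.

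First I would verify that $\Res$ is well-defined by showing $\eta(\sH_0)>0$ for any $\eta \in M_R(\sH_I)$. Every horofunction $h \in \sH_I$ attains the value $0$ somewhere on $G$ (by distance-likeness together with unboundedness above), so the sets $D_g = \{h \in \sH_I : h(g^{-1})=0\}$, $g \in G$, cover $\sH_I$ and the partial transformations $\phi_g(h)=gh$ send $D_g$ into $\sH_0$. If $\eta(\sH_0)=0$, then $R$-invariance forces $\eta(D_g)=0$ for every $g$, so by countable subadditivity $\eta(\sH_I)=0$, contradicting $\eta(\sH_I)=1$.

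For injectivity, I would exhibit for each $k \in I$ a finite-to-one Borel map $\psi_k:\sH_k \to \sH_0$ with graph in $R$, and conclude via Corollary \ref{cor:mtp} that $\eta|_{\sH_k}$ is a functional of $\eta|_{\sH_0}$. For $k \le 0$ take $\psi_k=\Par^{|k|}$: its fibers have size at most $|A|^{|k|}$ because the children of any horofunction are indexed by an element of $A$ satisfying the two conditions in the parent definition, and its graph lies in $R$ by construction. For $k>0$ take $\psi_k(h)=g(h)^{-1}h$, where $g(h)$ is the lexicographically minimal element of $h^{-1}(0)\cap B(e,R_k)$ for a radius $R_k=R_k(k,\delta)$ large enough that such a $g$ always exists (using distance-likeness and $\delta$-hyperbolicity); the fiber sizes are then bounded by $|B(e,R_k)|$. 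Extending each $\psi_k$ to $f:\sH_I\to\sH_I$ by the identity off $\sH_k$ and applying the corollary yields
$$\eta(E) = \int_{\sH_0} |\psi_k^{-1}(y) \cap E| \, d\eta(y) \quad \text{for every Borel } E \subset \sH_k,$$
so $\eta|_{\sH_k}$ is determined by $\eta|_{\sH_0}$, and the normalization $\sum_{k \in I}\eta(\sH_k)=1$ fixes $\eta(\sH_0)$ in terms of $\Res(\eta)$.

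For surjectivity, given $\nu \in M_R(\sH_0)$ I would define a candidate
$$\eta(E) = \frac{1}{Z}\sum_{k \in I}\int_{\sH_0}|\psi_k^{-1}(y) \cap E \cap \sH_k| \, d\nu(y),$$
where $Z$ is the finite normalization constant (finite because each $|\psi_k^{-1}(y)|$ is uniformly bounded and $|I|<\infty$), so that $\Res(\eta)=\nu$ by construction. The main obstacle is verifying $R$-invariance of $\eta$: I need that $\eta(E)=\eta(\phi(E))$ for every partial transformation $\phi$ of $R$ on $\sH_I$ and every Borel $E\subset\mathrm{dom}(\phi)$. My strategy would be to unfold both sides with the formula above, exploit that the graphs of the $\psi_k$ lie in $R$ (so $\psi_k$ and $\psi_k\circ\phi$ are both valid finite-to-one $R$-maps), and reduce the identity to an application of the mass transport principle (Proposition \ref{prop:mtp}) for $\nu$ on $\sH_0$; the uniqueness from the injectivity step then guarantees that the outcome does not depend on which finite-to-one representative $\psi_k$ is chosen in the construction.
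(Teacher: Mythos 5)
Your proof takes essentially the same route as the paper's: both rest on exhibiting a finite-to-one Borel map from $\sH_I$ into $\sH_0$ whose graph lies in $R$ and then invoking Corollary \ref{cor:mtp} to obtain the change-of-variables identity $\omega(E)=\int|f^{-1}(h)\cap E|\,d\omega(h)$, which simultaneously gives injectivity and the explicit formula for the inverse. The paper is slightly more economical, using a single uniformly finite-to-one map $f(h)=g_h h$ with $g_h\in B(e,C)$ in place of your per-level maps $\psi_k$, and it too leaves the $R$-invariance of the candidate inverse to the reader; the mass-transport verification you sketch for that step does go through.
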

\begin{proof}
Let $f: \sH_I \to \sH_0$ be a uniformly finite-to-1 Borel map whose graph is contained in the relation on $\sH$. For example, there is a constant $C>0$ such that for any $h \in \sH_I$ there is some $g_h \in B(e,C)$ such that $g_hh \in \sH_0$. We could define $f(h)=g_hh$ for some Borel choice of $g_h$.

If $\omega \in  M_R(\sH_I)$ and $E$ is a Borel subset of $\sH_I $ then corollary \ref{cor:mtp} implies
$$\omega(E) = \int |f^{-1}(h) \cap E| d\omega(h).$$
So, given $\eta \in M_R(\sH_0)$ define a measure $\Psi\eta$ on $\sH_{[-k,k]}$ by
$$\Psi\eta(E) = C^{-1} \int |f^{-1}(h) \cap E| d\eta(h)$$
where $C = \int |f^{-1}(h)| d\eta(h)$ is finite and positive. $\Psi$ is the inverse of $\Res$. 
\end{proof}

\begin{defn}
For $\eta \in M_R(\sH_0)$ and $k>0$, let $\omega_k \in M_R(\sH_{[0,k]})$ be the measure whose normalized restriction to $\sH_0$ equals $\eta$. Let ${\bar \eta}_k \in M_R(\sH_k)$ be the normalized restriction of $\omega_k$ to $\sH_k$. Define
$$\eta_k = \frac{\omega_k(\sH_k)}{\omega_k(\sH_0)}  {\bar \eta}_k.$$
Note that if $l>k$ then $\frac{\omega_l(\sH_k)}{\omega_l(\sH_0)}  = \frac{\omega_k(\sH_k)}{\omega_k(\sH_0)} $. 
\end{defn}

\begin{lem}\label{lem:etak}
For any $\eta \in M_R(\sH_0)$, any $j, k\ge 0$ and any Borel $E \subset \sH_j$, 
$$\eta_j(E) = \int |\Par^{-k}(h)\cap E| d\eta_{j+k}(h).$$
\end{lem}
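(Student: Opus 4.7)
The plan is to deduce the identity from the mass transport principle applied to the map $\Par^k$, combined with bookkeeping relating $\omega_{j+k}$, $\eta_j$, and $\eta_{j+k}$. First I would record three elementary properties of $\Par$: (i) since $\Par(h) = \Par_h(e)^{-1} h$ is a $G$-translate of $h$, the graph of $\Par^k$ sits inside the equivalence relation $R$; (ii) each preimage $\Par^{-1}(h)$ is contained in $\{ah : a \in A\}$, so $\Par^k$ is uniformly finite-to-one with $|\Par^{-k}(h)| \le |A|^k$; and (iii) $\Par(h)(e) = h(\Par_h(e)) = h(e)-1$, so $\Par^k$ maps $\sH_j$ into $\sH_{j+k}$.

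Working inside $\omega_{j+k} \in M_R(\sH_{[0,j+k]})$, I would apply proposition \ref{prop:mtp} to the function $F$ on $R \cap \sH_{[0,j+k]}^2$ defined by $F(h_1,h_2)=1$ when $h_1 \in E$ and $\Par^k(h_1)=h_2$, and $0$ otherwise. The left-hand sum collapses to $\mathbf{1}_E(h_1)$ and integrates to $\omega_{j+k}(E)$, while the right-hand sum equals $|\Par^{-k}(h_2)\cap E|$ and is supported on $h_2 \in \sH_{j+k}$. This produces
\begin{equation*}
\omega_{j+k}(E) \;=\; \int_{\sH_{j+k}} |\Par^{-k}(h) \cap E|\, d\omega_{j+k}(h).
\end{equation*}

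What remains is to rewrite both sides in terms of $\eta_j$ and $\eta_{j+k}$. The key observation is that for $m \le l$ the restriction $\omega_l|_{\sH_{[0,m]}}$ is proportional to $\omega_m$: both are $R$-invariant, and their normalized restrictions to $\sH_0$ both equal $\eta$, so the uniqueness in lemma \ref{restriction} forces $\omega_l|_{\sH_{[0,m]}} = (\omega_l(\sH_0)/\omega_m(\sH_0))\, \omega_m$. Combined with the identity $\omega_m|_{\sH_m} = \omega_m(\sH_0)\,\eta_m$ read off the definition of $\eta_m$, this yields $\omega_{j+k}(E) = \omega_{j+k}(\sH_0)\,\eta_j(E)$ for $E \subset \sH_j$ and $\omega_{j+k}|_{\sH_{j+k}} = \omega_{j+k}(\sH_0)\,\eta_{j+k}$. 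Substituting these into the displayed equation and cancelling the positive factor $\omega_{j+k}(\sH_0)$ gives the lemma. The only mildly delicate step is tracking these normalizations, which is essentially bookkeeping since the uniqueness part of lemma \ref{restriction} pins down the $\omega_l$'s up to a single global scalar.
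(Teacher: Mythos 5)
Your proposal is correct and follows essentially the same route as the paper: apply the mass transport principle inside $M_R(\sH_{[0,j+k]})$ to the map $\Par^k$, then unwind the normalizations using the definitions of $\eta_j$, $\bar\eta_j$, and $\omega_m$ together with the uniqueness in lemma \ref{restriction}. Your direct use of proposition \ref{prop:mtp} with an explicit $F$ is a slightly cleaner way to handle the fact that $\Par^k$ is not a self-map of $\sH_{[0,j+k]}$, but the idea is the same as the paper's appeal to corollary \ref{cor:mtp}.
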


\begin{proof}
Let $\omega \in M_R(\sH_{[0,j+k]})$ be such that the normalized restriction of $\omega$ to $\sH_0$ is $\eta$. Since $\omega$ is relation invariant, it follows from corollary \ref{cor:mtp} that 
$$\omega(E) = \int |\Par^{-k}(h)\cap E| d\omega(h).$$
Since $E \subset \sH_j$,
$$\omega(E) = {\bar \eta}_j(E)\omega(\sH_j) = \eta_j(E)\omega(\sH_0).$$
The integrand equals zero unless $h\in \sH_{j+k}$. Hence 
\begin{eqnarray*}
\eta_j(E) &=& \frac{\omega(E)}{\omega(\sH_0)}\\
&=& \frac{1}{\omega(\sH_0)} \int |\Par^{-k}(h)\cap E| d\omega(h)\\
 &=& \int |\Par^{-k}(h)\cap E| \frac{\omega(\sH_{j+k})}{\omega(\sH_0)} d{\bar \eta}_{j+k}(h)\\
&=& \int |\Par^{-k}(h)\cap E| d\eta_{j+k}(h).
\end{eqnarray*}
\end{proof}

\begin{defn}\label{defn:veta}
  For $\eta \in M_R(\sH_0)$ and $n \in \N$, let $\veta_n$ denote the $\tsB \times 1$ vector with $B$-entry equal to $\eta_n(B)$. 
\end{defn}


\begin{prop}\label{prop:determinism}
Every $\eta \in M_R(\sH_0)$ is determined by the vector sequence $\{\veta_n\}$. I.e., if $\eta, \omega \in M_R(\sH_0)$ and $\veta_n={\vec \omega}_n$ for all $n \ge 0$ then $\eta = \omega$.
\end{prop}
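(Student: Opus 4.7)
The plan is to use the symbolic coding $P:\sH_0\to\sP$ of Theorem \ref{thm:CPmain} to reduce the statement to a computation of $\eta$ on block cylinder sets, and then to apply Lemma \ref{lem:etak} to express each such value as an entry of $\veta_n$.

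Since $P$ is a homeomorphism onto the reverse-directed path space $\sP$, the Borel $\sigma$-algebra on $\sH_0$ is generated by the block cylinders
$$E_{B_0,\ldots,B_n}\ :=\ \bigl\{h\in\sH_0 : \Block(\Par^i(h))=B_i\ \text{for } 0\le i\le n\bigr\},$$
as $(B_0,\ldots,B_n)$ ranges over finite tuples in $\tsB^{n+1}$. Two Borel probability measures that agree on all such cylinders coincide, so it is enough to show $\eta(E_{B_0,\ldots,B_n})$ is determined by $\veta_n$ for each such tuple.

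Applying Lemma \ref{lem:etak} with $j=0$ and $k=n$ gives
$$\eta(E_{B_0,\ldots,B_n})\ =\ \int_{\sH_n}\bigl|\Par^{-n}(h')\cap E_{B_0,\ldots,B_n}\bigr|\,d\eta_n(h').$$
The key claim is that the integrand equals $1$ precisely when $\Block(h')=B_n$ and the sequence $(B_0,\ldots,B_n)$ is admissible in $\tsB$ (meaning each consecutive pair $(B_{i+1},B_i)$ is an edge), and is $0$ otherwise. Granted the claim,
$$\eta(E_{B_0,\ldots,B_n})\ =\ \eta_n(B_n)\ =\ \veta_n(B_n)$$
whenever the sequence is admissible, and is $0$ otherwise, which is manifestly determined by $\veta_n$.

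To verify the claim, fix $h'\in\sH_n$ with $\Block(h')=B_n$. Because $P$ is a homeomorphism onto $\sP$, the set $\Par^{-n}(h')\cap\sH_0$ is in bijection (via $h_0\mapsto P(h_0)$) with the admissible prefixes $(B'_0,\ldots,B'_{n-1})$ whose concatenation with the tail path $k\mapsto\Block(\Par^k(h'))$ lies in $\sP$. Intersecting with the cylinder pins the prefix to $(B_0,\ldots,B_{n-1})$, and the concatenation is admissible iff each transition $(B_{i+1},B_i)$ for $i=0,\ldots,n-1$ is an edge of $\tsB$ (the transition at position $n$ is automatic from $\Block(h')=B_n$). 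Hence the cardinality of the intersection lies in $\{0,1\}$, equal to $1$ exactly in the claimed situation. The main subtlety is justifying this bijection between $\Par^{-n}(h')$ and admissible prefixes; this is essentially the content of Theorem \ref{thm:CPmain}, with Lemma \ref{lem:determined} and the canonical generator $a(C)\in A$ attached to each block $C$ (Definition \ref{defn:tsB}) supplying the underlying rigidity needed to identify descendants of $h'$ block-by-block.
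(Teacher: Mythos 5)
Your proof is correct and reaches the same conclusion via a genuinely different route. The paper works with cylinder sets $Cyl(h,F)$ defined by restricting horofunctions to a finite subset $F\subset G$; it then fixes $k > 16\delta+\max_{f\in F}d(f,e)$, applies Lemma \ref{lem:etak}, and invokes Lemma \ref{lem:determined} only to show that the count $|\Par^{-k}(h)\cap L|$ depends on $h\in\sH_k$ only through $\Block(h)$ — the actual value of the count is never computed. Your version instead works directly with block cylinders $E_{B_0,\dots,B_n}$ (whose $\sigma$-algebra-generating property you correctly derive from Theorem \ref{thm:CPmain}), which makes the count explicitly $0$ or $1$ and gives the cleaner closed form $\eta(E_{B_0,\ldots,B_n})=\veta_n(B_n)$ for admissible tuples. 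This is a slightly stronger assertion than the paper's proof produces at this point, and in fact it subsumes Lemma \ref{lem:matrix} (the identity $\veta_n=M\veta_{n+1}$ follows by summing your formula over admissible prefixes). Both proofs rest on the same two pillars, Lemma \ref{lem:etak} plus the geometric rigidity packaged in Lemma \ref{lem:determined} / Theorem \ref{thm:CPmain}.

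One caveat: the bijection between $\Par^{-n}(h')\cap\sH_0$ and admissible prefixes, which you flag as the "main subtlety," needs a cleaner justification than the gesture you give. Injectivity is immediate from $P$ being injective, but surjectivity requires showing that the unique $h_0\in\sH_0$ with the prescribed block sequence actually satisfies $\Par^n(h_0)=h'$ (not merely that $\Par^n(h_0)$ and $h'$ share a tail of block types). The clean way to see this is to note that block types are invariant under adding constants, so $\Par^n(h_0)+n$ and $h'+n$ are both in $\sH_0$ with identical block sequences; injectivity of $P$ then forces them to coincide, hence $\Par^n(h_0)=h'$. This is closer to the injectivity of $P$ combined with shift-invariance of $\Block$ than to the "canonical generator $a(C)$" mechanism you cite. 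With that repair, the argument is complete.
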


\begin{proof}
For $h \in \sH$ and $F \subset G$, let $h|_F$ denote the restriction of $h$ to $F$. A {\bf cylinder set} $L \subset \sH$ is of the form $L=Cyl(h,F)$ where $F \subset G$ is finite and
$$Cyl(h,F):=\{h'\in\sH\,:\,h'|_F = h|_F\}.$$
Because cylinder sets generate the $\sigma$-algebra of Borel sets, it suffices to show that $\eta(L)=\omega(L)$ for every cylinder set $L$. Fix such a set $L \subset \sH_0$. Lemma \ref{lem:etak} implies
$$\eta(L) = \int |\Par^{-k}(h) \cap L| d\eta_k(h)$$
for any $k\ge 0$. So, fix $k > 16\delta +  \max_{f \in F} d(f,e)$. 

Fix $B \in \sB$. We claim that if $h_1, h_2 \in B \cap\sH_k$ then $|\Par^{-k}(h_1) \cap L|=|\Par^{-k}(h_2) \cap L|$. To see this, suppose $g \in G$ is such that $\Par^k(gh_1)=h_1$. Thus if $r:[0,\infty)\to\Gamma$ is the minimal $h_1$-gradient with $r(0)=g^{-1}$ then $r(k)=e$. By lemma \ref{lem:determined}, $r$ restricted to $[0,k]$ is also a minimal $h_2$-gradient. Thus $\Par^k(gh_2)=h_2$. By the same lemma, $h_1(f)=h_2(f)$ for all $f\in F$. Thus $gh_1 \in L$ iff $gh_2 \in L$. Since $g$ is arbitrary, this implies the claim.

 So we may let $N_k(L,B):=|\Par^{-k}(h)\cap L|$ for any $h \in \sH_k \cap B$. Therefore,
$$\eta(L) = \sum_{B \in \sB} N_k(L,B) \eta_k(B).$$
Since the same is true with $\omega$ replacing $\eta$, the proposition follows.
\end{proof}

\subsection{Generation growth}\label{generation}
Let
$$e(\Gamma)=\limsup_{n \to\infty} \frac{1}{n}\ln \big|S(e,n)\big|.$$
For $h \in \sH$, $S \subset G$ and $n \ge 0$, let
$$\Gen_n(h,S)=\bigcup_{g \in S} \Par_h^{-n}(g)$$
be the {\bf $n$-th generation} of the elements of $S$. The goal of this section is to prove
\begin{prop}\label{prop:generation}
There exist constants $C_1,C_2>0$ such that for any $h\in \sH_0$ if $S=B(e,C_1) \cap \{g\in G |\, h(g)=0\}$ then 
$$C_2^{-1} e^{e(\Gamma)n} \le \big|\Gen_n(h,S)\big| \le C_2 e^{e(\Gamma)n}$$
for all $n\ge 0$.
\end{prop}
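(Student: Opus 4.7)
The plan is to sandwich $|\Gen_n(h,S)|$ between constant multiples of $e^{e(\Gamma)n}$ by combining simple distance estimates with the Patterson--Sullivan theory from [Co93], which supplies the ball estimate $|B(e,m)| \asymp e^{e(\Gamma)m}$ and the shadow lemma $\mu(O(g,r)) \asymp e^{-e(\Gamma)d(e,g)}$ for the Patterson--Sullivan measure $\mu$ on $\partial\Gamma$, where $O(g,r)$ denotes the shadow of the $r$-ball about $g$ as seen from $e$.

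The upper bound is immediate. If $g \in \Gen_n(h,S)$ and $g_0 := \Par_h^n(g) \in S$, then $h(g) = h(g_0) + n = n$ and $d(g,g_0) = n$ because $h$-gradient arcs are geodesics (Prop.~3.3 of [CP01]), so $d(g,e) \le n + C_1$. Coornaert's ball estimate therefore gives $|\Gen_n(h,S)| \le |B(e,n+C_1)| \le C \cdot e^{e(\Gamma)n}$.

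For the lower bound I would establish two geometric facts. First: for every $g \in h^{-1}(n) \cap B(e,n+C_0)$, the endpoint $g_0 := \Par_h^n(g) \in h^{-1}(0)$ satisfies $d(g_0,e) \le C_0 + O(\delta)$. The argument is a thin-triangle analysis of $(e,g,g_0)$: $\epsilon$-convexity forces $h \le 68\delta$ along $[e,g_0]$, while $h$ climbs at unit rate along the gradient arc $[g_0,g]$, so $\delta$-fellow-travelers issuing from $g_0$ can extend only a distance $O(\delta)$; this bounds the Gromov product $(e|g)_{g_0}$ by $O(\delta)$ and, via the triangle identity, yields $d(g_0,e) \le d(e,g) - n + O(\delta) \le C_0 + O(\delta)$. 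Second: $|h^{-1}(n) \cap B(e,n+C_0)| \ge C^{-1} e^{e(\Gamma)n}$ for $C_0$ large enough. For $\xi \in \partial\Gamma$ with $(\xi|\pi(h))_e \le k$, a coarse computation in $\delta$-hyperbolic geometry gives $h(r_\xi(t)) = t - 2k + O(\delta)$ for $t > k$ on a geodesic ray $r_\xi$ from $e$ to $\xi$, so $r_\xi$ crosses $h^{-1}(n)$ at parameter $n + 2k + O(\delta)$; adjusting by $O(1)$ produces a group element $g_\xi \in h^{-1}(n) \cap B(e, n + C_0)$ with $C_0 := 2k + O(\delta)$. The set $\Xi := \{\xi : (\xi|\pi(h))_e \le k\}$ has $\mu(\Xi) \ge 1/2$ for $k$ sufficiently large (its complement is a shrinking visual neighborhood of $\pi(h)$), while the shadow lemma bounds $\mu(O(g_\xi,r)) \le C e^{-e(\Gamma)n}$, so a packing count yields at least $(\mu(\Xi)/C) \cdot e^{e(\Gamma)n}$ distinct elements $g_\xi$. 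Setting $C_1 := C_0 + O(\delta)$ and $S := B(e,C_1) \cap h^{-1}(0)$, the first fact gives $\Gen_n(h,S) \supset h^{-1}(n) \cap B(e,n+C_0)$, and the lower bound follows.

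The main obstacle is ensuring all constants --- in the thin-triangle step, in the horospherical crossing estimate, and in the packing count --- are uniform over $h \in \sH_0$ (equivalently, over $\pi(h) \in \partial\Gamma$). This uniformity is precisely what Coornaert's quasi-conformal/Ahlfors-regular version of Patterson--Sullivan theory provides: the shadow lemma holds with constants independent of the point, and the $\mu$-mass of a visual ball of any fixed radius is bounded away from zero independently of its center.
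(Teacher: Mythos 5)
Your proof is correct, and the underlying strategy matches the paper's: lower-bound $|\Gen_n(h,S)|$ by covering a subset of $\partial\Gamma$ of definite $\mu$-mass with shadows of elements at distance roughly $n$ from $e$, then invoke the shadow lemma. The execution is genuinely different in the details, though. The paper takes the set $O_h(e,C_0)\subset\partial\Gamma$ of endpoints of bi-infinite geodesics to $\pi(h)$ that pass within $C_0$ of $e$, and for each $\xi\in O_h(e,C_0)$ replaces that geodesic by a nearby minimal $h$-gradient line $r'$ from $\xi$ to $\pi(h)$. The point $r'(s_2)$ at which $h$ vanishes lies in $S$, and $r'(s_2+n)\in\Gen_n(h,S)$ \emph{automatically} because $r'$ is a gradient line, with $\xi\in O(r'(s_2+n),t)$. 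That bypasses both of your auxiliary geometric lemmas. Your version instead covers the complement $\Xi$ of a Gromov-product ball around $\pi(h)$, uses a horospherical-crossing estimate to land a vertex $g_\xi\in h^{-1}(n)\cap B(e,n+C_0)$ on the ray $[e,\xi)$, and then needs the separate thin-triangle/convexity argument to certify $h^{-1}(n)\cap B(e,n+C_0)\subset\Gen_n(h,S)$. Both routes work; the gradient-line route is shorter because it produces elements of $\Gen_n(h,S)$ directly rather than through an intermediate set. One thing your write-up handles more carefully than the paper: the paper chooses $C_0$ (hence $C_1$) so that $\mu(O_h(e,C_0))>0$, which a priori depends on $h$, and the needed uniformity over $h\in\sH_0$ is left implicit; you flag it and correctly tie it to the uniform upper regularity of the Patterson--Sullivan measure supplied by [Co93].
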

The proof involves Patterson-Sullivan theory by way of [Co93].
\begin{thm}[Co93, th\'eor\`eme 7.2]\label{thm:growth}
There exists a constant $C\ge 1$ such that for all $n \ge 0$, 
$$C^{-1}\exp(e(\Gamma)n) \le |S(e,n)| \le  C\exp(e(\Gamma)n).$$
\end{thm}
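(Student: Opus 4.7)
The plan is to prove this via Patterson--Sullivan theory on the Cayley graph $\Gamma$. Step one is to construct a probability measure $\mu$ on $\partial\Gamma$ of dimension $e(\Gamma)$ as a weak-$*$ subsequential limit of
$$\mu_s=\frac{1}{Z(s)}\sum_{g\in G}\phi\big(e^{-s\,d(g,e)}\big)\delta_g,\qquad s\searrow e(\Gamma),$$
where $Z(s)$ is the normalization constant and, if necessary, $\phi$ is a Patterson slowly-varying correction forcing divergence of the series at the critical exponent. Because $G$ is nonelementary, the orbit $Ge$ escapes to $\partial\Gamma$, so $\mu$ is supported on the boundary. A standard Fatou-type argument yields a quasi-conformality estimate: there is $C_0\ge 1$ with
$$C_0^{-1}e^{-e(\Gamma)\beta_\xi(g)}\le \frac{dg_*\mu}{d\mu}(\xi)\le C_0\,e^{-e(\Gamma)\beta_\xi(g)}$$
for every $g\in G$ and $\mu$-a.e.\ $\xi\in\partial\Gamma$, where $\beta_\xi$ is the Busemann cocycle.

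The key technical step is the \emph{shadow lemma}: for a sufficiently large $r_0$ (depending only on $\delta$), the shadow $O(g)\subset\partial\Gamma$ of the ball $B(g,r_0)$ seen from $e$ satisfies
$$C_1^{-1}\,e^{-e(\Gamma)\,d(g,e)}\le \mu(O(g))\le C_1\,e^{-e(\Gamma)\,d(g,e)}\qquad\forall g\in G,$$
with $C_1$ depending only on $(G,A)$. For the upper bound one translates by $g^{-1}$: the set $g^{-1}O(g)$ lies inside a fixed visual ball at infinity of bounded radius, hence has $\mu$-mass at most $1$, and the quasi-conformality estimate contributes the factor $e^{-e(\Gamma)d(g,e)}$ because $\beta_\xi(g)\approx -d(g,e)$ on $O(g)$. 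The lower bound requires $g^{-1}O(g)$ to have $\mu$-mass bounded below by a positive constant; this reduces to showing that $\mu$ assigns positive mass to every nondegenerate visual ball of a fixed radius in $\partial\Gamma$, which is where the divergence of the Poincar\'e series at $s=e(\Gamma)$ together with nonelementarity of $G$ are essential. I expect this lower bound to be the main obstacle.

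To finish, combine the shadow lemma with two elementary $\delta$-hyperbolic facts: (i) every $\xi\in\partial\Gamma$ is the endpoint of some geodesic ray from $e$, and such a ray passes within distance $\delta+1$ of some $g_\xi\in S(e,n)$, so $\partial\Gamma=\bigcup_{g\in S(e,n)}O(g)$ as soon as $r_0>\delta+1$; (ii) by a standard thin-triangles argument, any two sphere elements $g,g'\in S(e,n)$ whose shadows meet at a common $\xi$ must lie within a uniform distance of one another, so there is a constant $N$, depending only on $(G,A)$, with $\#\{g\in S(e,n):\xi\in O(g)\}\le N$ for every $\xi\in\partial\Gamma$. Plugging these into the shadow lemma,
$$1=\mu(\partial\Gamma)\le\sum_{g\in S(e,n)}\mu(O(g))\le C_1\,|S(e,n)|\,e^{-e(\Gamma)n}$$
delivers $|S(e,n)|\ge C_1^{-1}e^{e(\Gamma)n}$, while
$$C_1^{-1}\,|S(e,n)|\,e^{-e(\Gamma)n}\le\sum_{g\in S(e,n)}\mu(O(g))\le N\,\mu(\partial\Gamma)=N$$
delivers $|S(e,n)|\le NC_1\,e^{e(\Gamma)n}$, and setting $C=NC_1$ completes the proof.
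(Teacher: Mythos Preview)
The paper does not actually prove this theorem: it is quoted as \textit{th\'eor\`eme 7.2} of [Co93] and used as a black box, with the shadow lemma (also quoted, as [Co93, Proposition 6.1]) appearing separately. Your proposal is essentially a reconstruction of Coornaert's original argument: build a Patterson--Sullivan measure of dimension $e(\Gamma)$, establish the shadow lemma, and then count sphere elements via the covering $\partial\Gamma=\bigcup_{g\in S(e,n)}O(g)$ with bounded multiplicity. That is the standard route and the sketch is sound; the one place to be careful is the lower bound in the shadow lemma, which you correctly flag as the delicate step---Coornaert handles it by showing $\mu$ has no atoms and that the complement of any fixed shadow translate has $\mu$-mass bounded away from $1$, using minimality of the $G$-action on $\partial\Gamma$ for nonelementary $G$.
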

 
We need the concept of a quasiconformal measure on $\partial \Gamma$. See [Co93] for more details. Fix a constant $a>1$. For $\xi \in \partial \Gamma$, let $h_\xi$ be any horofunction with point at infinity equal to $\xi$. 

\begin{defn}\label{def:qc}
Let $D\ge 0$. Let $\eta$ be a measure on $\partial \Gamma$ with $0 < \eta(\partial \Gamma) < \infty$. Then $\eta$ is {\bf $G$-quasiconformal of dimension $D$} if it is $G$-quasiinvariant and $\exists C \ge 1$ such that
$$C^{-1}a^{D(h_\xi(e) - h_\xi(g))} \le \frac{d(g_*\eta)}{d\eta}(\xi) \le C a^{D(         h_\xi(e) - h_\xi(g))} $$
for all $g \in G$. This is well-defined independently of the choice of $h_{\xi}$ by theorem \ref{thm:CP1} below. Here $g_*\eta(E)=\eta(g^{-1}E)$ for all Borel $E$. Our definition differs slightly from the one in [Co93] because we consider $g_*$ rather than $g^*$.   
\end{defn}

\begin{thm}[CP01, corollary 4.9] \label{thm:CP1}
 If $h_1, h_2 \in \sH_0$ then $\pi(h_1)=\pi(h_2)$ if and only if $||h_1-h_2||_\infty \le 64\delta$.
\end{thm}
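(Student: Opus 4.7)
\textbf{Proof plan for Theorem \ref{thm:CP1}.}

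The plan is to handle the two directions separately, using Proposition \ref{prop4.4} as the main technical tool on both sides, along with the classical hyperbolic-geometry fact that two geodesic rays are asymptotic (stay within a bounded distance) if and only if they converge to the same boundary point.

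For the forward direction ($\pi(h_1)=\pi(h_2)\Rightarrow \|h_1-h_2\|_\infty\le 64\delta$), I would begin by choosing an $h_i$-gradient ray $r_i:[0,\infty)\to\Gamma$ with $r_i(0)=e$ for $i=1,2$, so that $r_1(\infty)=r_2(\infty)=\pi(h_1)$. By the standard fact about asymptotic geodesic rays in a $\delta$-hyperbolic space, $d(r_1(t),r_2(t))$ is bounded by a constant of order $\delta$ (in fact $\le 8\delta$ after an initial segment). For any fixed $g\in G$ I would then apply Proposition \ref{prop4.4} to each $h_i$ using the ray $r_i$, obtaining, for all sufficiently large $t$,
\[
h_i(g)=h_i(r_i(t))+d(g,R_t^i)=-t+d(g,R_t^i),
\]
since $h_i(e)=0$ implies $h_i(r_i(t))=-t$. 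Because $r_1(t)$ and $r_2(t)$ lie at bounded distance and the sets $R_t^i$ sit in $16\delta$-balls around $r_i(t)$ on prescribed level sets, the sets $R_t^1$ and $R_t^2$ are Hausdorff-close, giving $|d(g,R_t^1)-d(g,R_t^2)|$ bounded by a constant. Tracking the contributions ($8\delta$ for the ray separation, two factors of $16\delta$ for the ball radii, and slack from the condition on $t$ in Proposition \ref{prop4.4}) yields the stated bound $64\delta$.

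For the converse I would prove the contrapositive: assuming $\pi(h_1)\ne\pi(h_2)$, show $\|h_1-h_2\|_\infty=\infty$. Pick $h_i$-gradient rays $r_i$ from $e$, so $r_i(\infty)=\pi(h_i)$. Because the rays converge to distinct boundary points, the Gromov product $(r_1(s)\,|\,r_2(t))_e$ is bounded above by a constant $M$ (depending only on the pair $\pi(h_1),\pi(h_2)$), which gives
\[
d(r_1(s),r_2(t))\ge s+t-2M.
\]
Fix a large $s$ and apply Proposition \ref{prop4.4} to $h_1$ along $r_1$ at the point $g=r_2(t)$; once $t$ is large enough the proposition's hypothesis on $s$ is met and
\[
h_1(r_2(t))=-s+d\bigl(r_2(t),R_s^1\bigr)\ge -s+\bigl(s+t-2M-16\delta\bigr)=t-2M-16\delta.
\]
On the other hand $h_2(r_2(t))=-t$, so $h_1(r_2(t))-h_2(r_2(t))\ge 2t-2M-16\delta\to\infty$, contradicting boundedness of $h_1-h_2$.

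The main obstacle, and the reason the converse is the more delicate direction, is making rigorous the intuition that ``$r_2(t)$ escapes $h_1$-horoballs'' in the $\epsilon$-horofunction setting where $h_1$ is only $\epsilon$-convex rather than genuinely convex; this is precisely what Proposition \ref{prop4.4} packages for us, so the argument reduces to a Gromov-product estimate. The exact constant $64\delta$ in the forward direction is bookkeeping: one needs the ray separation estimate, the $16\delta$-radii in the definition of $R_t^i$, and the error terms in Proposition \ref{prop4.4} to add up no worse than $64\delta$, and the choice $\epsilon=68\delta$ inherited from [CP01, Corollary 4.8] provides the necessary slack.
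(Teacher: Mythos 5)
The paper does not supply a proof of this statement; it is quoted verbatim as \emph{[CP01, Corollary 4.9]} and used as a black box, so there is no in-house argument to compare against. That said, your sketch is substantially correct, and the ingredients you use (Proposition \ref{prop4.4}, asymptoticity of $h$-gradient rays, and the Gromov-product estimate) are the natural ones.

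Two small points. In the converse direction you have the quantifiers the wrong way around. Applying Proposition \ref{prop4.4} to $h_1$ along $r_1$ with $n=0$ at $g=r_2(t)$ requires the ray parameter $s$ to satisfy $s> d(r_2(t),e)+16\delta = t+16\delta$, so it is $s$ that must be taken large in terms of $t$, not $t$ large for a fixed $s$. The correct ordering is: for each $t$ choose $s>t+16\delta$; then $h_1(r_2(t))=-s+d(r_2(t),R_s^1)\ge -s+(s+t-2M-16\delta)=t-2M-16\delta$, and since $h_2(r_2(t))=-t$ this gives $h_1(r_2(t))-h_2(r_2(t))\ge 2t-2M-16\delta$, which is unbounded as $t\to\infty$. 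With that fix the contrapositive argument is clean. In the forward direction the estimate is right, but the bookkeeping is left vague. Explicitly, since $r_i(t)\in R_t^i$ and $R_t^i\subset B(r_i(t),16\delta)$ one has $d(g,r_i(t))-16\delta\le d(g,R_t^i)\le d(g,r_i(t))$, whence $|h_1(g)-h_2(g)|=|d(g,R_t^1)-d(g,R_t^2)|\le d(r_1(t),r_2(t))+16\delta$ once $t$ is large enough for Proposition \ref{prop4.4} to apply at $g$. Any uniform bound of order $\delta$ on the separation of two geodesic rays from $e$ to the common point $\pi(h_1)=\pi(h_2)$ then closes the argument and lands well below $64\delta$, so the stated constant is comfortably achieved; you should, however, state which asymptoticity bound you are invoking rather than gesturing at ``bookkeeping.''
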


\begin{thm}[Co93, corollaire 7.5]\label{thm:ps}
If $G$ is nonelementary and word hyperbolic and $\eta_1,\eta_2$ are $G$-quasiconformal measures on $\partial \Gamma$ of dimension $D_1, D_2$ respectively, then $D_1=D_2=\frac{e(\Gamma)}{\ln(a)}$ and $\eta_1$ is equivalent to $\eta_2$. In fact, both are equivalent to $D$-dimensional Hausdorff measure on $\partial \Gamma$ with respect to a natural metric.
\end{thm}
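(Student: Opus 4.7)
My plan would be to follow the classical Patterson--Sullivan strategy adapted to the Gromov-hyperbolic setting. The centerpiece I would establish is a Sullivan-type shadow lemma. For $g \in G$ and $r>0$ let $\textrm{Shad}(g,r) \subset \partial \Gamma$ denote the set of $\xi$ such that some geodesic ray from $e$ to $\xi$ passes within distance $r$ of $g$. The lemma I would aim for states that for $r$ sufficiently large in terms of $\delta$, there is a constant $C_r \ge 1$ with
$$C_r^{-1} a^{-D\,d(e,g)} \;\le\; \eta(\textrm{Shad}(g,r)) \;\le\; C_r\, a^{-D\,d(e,g)}$$
for every $G$-quasiconformal measure $\eta$ of dimension $D$ and every $g \in G$. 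Everything else in theorem \ref{thm:ps} will follow from this formally.

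To prove the shadow lemma, I would first show that if $\xi \in \textrm{Shad}(g,r)$ then any representative $h_\xi$ of the horofunction class of $\xi$ satisfies $h_\xi(e) - h_\xi(g) = d(e,g) + O(1)$, where the constant depends only on $\delta$ and $r$. This comes from running an $h_\xi$-gradient ray asymptotic to $\xi$, applying proposition \ref{prop4.4}, and using the Rips thin-triangle condition together with theorem \ref{thm:CP1} to absorb the choice of representative. Next I would note that $g^{-1}\cdot\textrm{Shad}(g,r)$ is, up to a uniformly bounded symmetric difference, a fixed shadow of $e$ seen from $g^{-1}$; its $\eta$-measure is bounded above by $\eta(\partial\Gamma)$ and below by the measure of a fixed ``large'' shadow of $e$, which is positive because finitely many $G$-translates of a big enough shadow cover $\partial \Gamma$ and $\eta$ is a nonzero finite measure (here nonelementarity of $G$ prevents atoms from carrying everything). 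Transporting this back by the quasiconformal Radon--Nikodym bound on $d(g_*\eta)/d\eta$, which is $\asymp a^{D\,d(e,g)}$ on the shadow by the horofunction estimate above, yields the two-sided bound.

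With the shadow lemma in hand, the theorem splits into three short steps. First, the shadows $\{\textrm{Shad}(g,r) : g \in S(e,n)\}$ cover $\partial \Gamma$ with bounded multiplicity by a standard packing argument in hyperbolic geometry, so summing gives $|S(e,n)|\,a^{-Dn} \asymp 1$; combined with theorem \ref{thm:growth} this forces $D\ln(a) = e(\Gamma)$ and hence $D_1 = D_2$. Second, applying the shadow lemma simultaneously to $\eta_1$ and $\eta_2$ yields $\eta_1(\textrm{Shad}(g,r))/\eta_2(\textrm{Shad}(g,r))$ bounded above and below uniformly in $g$; since the shadows of large radius form a Vitali basis for $\partial \Gamma$, a Lebesgue differentiation argument produces a Radon--Nikodym derivative $d\eta_1/d\eta_2$ pinched between two positive constants, so $\eta_1 \sim \eta_2$. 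Third, endowing $\partial \Gamma$ with the visual metric $\rho(\xi,\xi') \asymp a^{-(\xi|\xi')_e}$, one checks that $\textrm{Shad}(g,r)$ is comparable to a $\rho$-ball of radius $\asymp a^{-d(e,g)}$; the shadow lemma then becomes Ahlfors $D$-regularity $\eta(B_\rho(\xi,s)) \asymp s^D$, which forces $\eta$ to be equivalent to $D$-dimensional Hausdorff measure for $\rho$.

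The hard part will be the shadow lemma, and within it the uniform geometric comparison $h_\xi(g) \approx h_\xi(e) - d(e,g)$ for $\xi$ in a shadow of $g$. Horofunctions are only coarsely determined by their point at infinity, so I will need to leverage theorem \ref{thm:CP1} and proposition \ref{prop4.4} to make this comparison uniform in $g$ and in the chosen representative $h_\xi$. Once that geometric input is pushed through, the covering estimate, the dimension count, the Ahlfors regularity, and the equivalence conclusions are routine consequences of quasiconformality and the growth estimate, and all three assertions of theorem \ref{thm:ps} drop out together.
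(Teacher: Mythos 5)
This theorem is not proved in the paper: it is imported verbatim from Coornaert's paper as [Co93, corollaire 7.5], and the shadow lemma you propose to prove as your centerpiece is \emph{also} imported as lemma \ref{lem:shadow} ([Co93, Proposition 6.1]). So there is no in-paper proof to compare against; the paper treats both results as black boxes.

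That said, your reconstruction is the correct route and matches Coornaert's actual argument. The sequence --- shadow lemma $\Rightarrow$ dimension count via a bounded-multiplicity cover of $\partial\Gamma$ by sphere shadows $\Rightarrow$ mutual absolute continuity via Lebesgue differentiation on a Vitali basis of shadows $\Rightarrow$ Ahlfors $D$-regularity in the visual metric $\Rightarrow$ equivalence with Hausdorff measure --- is precisely the Patterson--Sullivan scheme as adapted to Gromov-hyperbolic spaces. Two small clarifications worth making explicit if you were to write this out in full. First, the role of nonelementarity is not really about atoms; it is what guarantees $\partial\Gamma$ is infinite and the boundary action is minimal, which is what makes a big enough shadow of $e$ have positive measure after quasiconformality (and is what lets the conformal dimension $D = e(\Gamma)/\ln a$ be positive and well defined). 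Second, the upper bound in the shadow lemma as stated in [Co93, Proposition 6.1] carries a factor $a^{2Dt}$ depending on the shadow radius $t$; since $t$ is fixed once and for all, this is harmless and gets absorbed into $C_r$, but it is worth being consistent with that form when you invoke the lemma. Also note that, since the paper already cites the shadow lemma, you could shorten your proposal by starting from it directly rather than re-deriving the horofunction estimate $h_\xi(e)-h_\xi(g)=d(e,g)+O(1)$ from proposition \ref{prop4.4} and theorem \ref{thm:CP1} --- that derivation is correct but redundant given what the paper assumes.
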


\begin{defn}
For $g \in G$ and $t \ge 0$ let $O(g,t)$ be the set of all $\xi \in \partial \Gamma$ such that there is a geodesic ray $r:[0,\infty) \to \Gamma$ such that: $r(0)=e$, $r(\infty)=\xi$ and for some $s \ge 0$, $d(r(s),g) \le t$. $O(g,t)$ is the {\bf shadow} of the ball $B(g,t)$ on $\partial \Gamma$.
\end{defn}

\begin{lem}[Co93, Proposition 6.1]\label{lem:shadow}
Let $\mu$ be a quasiconformal measure of dimension $D$ on $\partial \Gamma$. Then there exists constants $C \ge 1$ and $t_0 \ge 0$ such that for all $t>t_0$ and for all $g \in G$, 
$$C^{-1}a^{-|g|D} \le \mu(O(g,t)) \le Ca^{-|g|D + 2Dt}$$
where $|g|=d(g,e)$. 
\end{lem}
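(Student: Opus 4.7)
The plan is to exploit the $G$-quasiconformal property of $\mu$ to push the shadow $O(g,t)$ forward by $g$, turning it into a shadow seen from the far-off point $g^{-1}$, and then separately estimate that transported mass. The key identity, using $(g_*\mu)(E)=\mu(g^{-1}E)$ together with the Radon--Nikodym bound in Definition~\ref{def:qc}, is
\[
\mu\bigl(g^{-1}O(g,t)\bigr)=(g_*\mu)\bigl(O(g,t)\bigr)=\int_{O(g,t)}\frac{d(g_*\mu)}{d\mu}(\xi)\,d\mu(\xi)\asymp a^{D(h_\xi(e)-h_\xi(g))}\,\mu\bigl(O(g,t)\bigr).
\]
So the heart of the matter is a pointwise horofunction estimate on $O(g,t)$ plus a two-sided estimate for $\mu(g^{-1}O(g,t))$.

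First I would handle the horofunction estimate. For $\xi\in O(g,t)$, a geodesic ray $r:[0,\infty)\to\Gamma$ with $r(0)=e$, $r(\infty)=\xi$ satisfies $d(r(s_0),g)\le t$ for some $s_0$, and the triangle inequality forces $|s_0-|g||\le t$. Since $h_\xi$ is $1$-Lipschitz and decreases at unit speed along $r$, we get $h_\xi(g)=h_\xi(e)-|g|+O(t)$, where the implied constant depends only on $\delta$. Substituting into the boxed display yields
\[
\mu(O(g,t))\asymp a^{-D|g|+O(Dt)}\,\mu\bigl(g^{-1}O(g,t)\bigr).
\]
The upper bound is then immediate from $\mu(g^{-1}O(g,t))\le \mu(\partial\Gamma)$, absorbing the $O(Dt)$ into the constant $C$ to obtain $\mu(O(g,t))\le Ca^{-|g|D+2Dt}$.

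For the lower bound I would show that for $t\ge t_0$ sufficiently large, $\mu(g^{-1}O(g,t))\ge c>0$ uniformly in $g$. Note $g^{-1}O(g,t)$ is the set of $\eta\in\partial\Gamma$ for which the geodesic from $g^{-1}$ to $\eta$ enters $B(e,t)$, so its complement is the set of $\eta$ whose geodesic from $g^{-1}$ avoids $B(e,t)$. Applying $\delta$-thinness to the ideal triangle with vertices $e$, $g^{-1}$, $\eta$, any such $\eta$ is forced to have large Gromov product with the direction at infinity determined by $g^{-1}$ from $e$; consequently the complement is contained in a ``thin neighborhood'' of a single boundary point, itself contained in a shadow $O(g',s)$ of bounded radius $s=s(\delta,t_0)$. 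One then rules out concentration of $\mu$ by the standard argument that a finite number of $e$-based shadows cover $\partial\Gamma$ and, by the quasi-invariance of Definition~\ref{def:qc}, all have pairwise comparable measure, so no single such shadow can exhaust more than a uniform fraction $1-c$ of the total mass.

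The main obstacle is the lower bound. The upper bound is a one-line consequence of quasi-conformality plus the horofunction estimate, but the lower bound requires a genuine use of hyperbolicity (to localize the complement near a single boundary direction) and a non-concentration property for quasiconformal measures (to bound the mass of that localized region away from $1$). The hardest technical step is making the localization work uniformly in $g$: the $\delta$-thin triangle argument a priori gives a shadow whose radius depends on $|g|$, so one must iterate or use a compactness argument with the quasi-invariance in Definition~\ref{def:qc} to replace this by a fixed-radius shadow.
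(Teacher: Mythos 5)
The paper does not prove this lemma; it cites it to [Co93, Proposition 6.1], so there is no in-paper proof to compare against. Evaluated on its own terms, your reduction via the quasiconformal density and the horofunction estimate $h_\xi(e)-h_\xi(g)\in[\,|g|-2t-O(\delta),\,|g|\,]$ for $\xi\in O(g,t)$ is correct, and both the resulting upper bound $\mu(O(g,t))\le Ca^{-|g|D+2Dt}$ and the reduction of the lower bound to a uniform estimate $\mu(g^{-1}O(g,t))\ge c>0$ are sound. (Note that the error in the horofunction estimate is one-sided, so the lower bound comes out cleanly as $C^{-1}a^{-|g|D}$ with no spurious $t$-dependence and no need to invoke monotonicity of $t\mapsto O(g,t)$.)

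The genuine gap is in the non-concentration step. You justify $\mu(g^{-1}O(g,t))\ge c$ by asserting that finitely many $e$-based shadows cover $\partial\Gamma$ and are ``pairwise comparable by quasi-invariance,'' but that comparability is not a consequence of Definition~\ref{def:qc} alone: quasiconformality compares $\mu(E)$ with $\mu(gE)$ at the cost of a factor $a^{\pm D\,d(e,g)}$, and two shadows based at $g_1,g_2\in S(e,n)$ differ by a translation of word-length up to $2n$, so the ``comparability constant'' a priori blows up with $n$. In effect, uniform comparability of same-level shadows is equivalent to the very shadow lemma you are proving. What actually closes the argument is this: the complement of $g^{-1}O(g,t)$ consists of those $\eta$ with $(\eta\mid g^{-1})_e>t-O(\delta)$, and a thin-triangle argument places all such $\eta$ in a single shadow $O(g',s)$ with $s=O(\delta)$ and $g'$ on $[e,g^{-1}]$ at distance $\approx t$ from $e$ — in particular $|g'|\approx t$, not $|g|$ (your ``hardest technical step'' about $|g|$-dependence is a misdiagnosis and evaporates). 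Now fix $t=t_0$ once and for all, large relative to $\delta$. Then $g'$ ranges over the finite set $B(e,t_0+O(\delta))$, each $O(g',s)$ is a proper closed subset of $\partial\Gamma$, and because a $G$-quasi-invariant probability measure on the minimal $G$-space $\partial\Gamma$ has full support (Theorem~\ref{thm:ps} even gives equivalence to Hausdorff measure), each such shadow has $\mu$-measure strictly less than $\mu(\partial\Gamma)$. Finiteness then yields a uniform $c>0$. This support-plus-finiteness argument, not pairwise comparability, is what makes the lower bound go through.
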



We can now prove proposition \ref{prop:generation}.
\begin{proof}[Proof of proposition \ref{prop:generation}]
The upper bound follows immediately from theorem \ref{thm:growth}. 

Let $\mu$ be a quasiconformal measure on $\partial \Gamma$. For $h\in\sH_0$ and $C>0$, let $O_h(e,C)$ be the set of all $\xi \in \partial \Gamma$ such that there exists a geodesic $r:(-\infty,\infty)\to\Gamma$ satisfying
$$r(+\infty)=\pi(h),\, d(r(0),e)\le C \textrm{ and } r(-\infty)=\xi.$$
Since $O_h(e,C) \subset O_h(e,C+1)$ and $\cup_{C >0} O_h(e,C) = \partial \Gamma - \{\pi(h)\}$, it follows that for some $C_0>0$, $\mu(O_h(e,C_0))>0$. Let $C_1=2C_0+8\delta$ and $S=B(e,C_1) \cap \{g\in G |\, h(g)=0\}$.

Recall that $S=B(e,C_1)\cap\{g \in G~|~h(g)=0\}$. We claim that 
$$O_h(e,C_0) \subset \bigcup_{g \in \Gen_n(h,S)}O(g,t)$$
for any $t >0$. So let $\xi \in O_h(e,C_0)$. Let $r:(-\infty,\infty)\to \Gamma$ be as above. So $r(+\infty)=\pi(h), r(-\infty)=\xi$ and $d(r(0),e)\le C_0$. Let $r':(-\infty,+\infty)\to \Gamma$ be a minimal $h$-gradient line with $r'(-\infty)=\xi$, $r'(+\infty)=\pi(h)$. Since $r$ and $r'$ have the same endpoints at infinity, they are within a Hausdorff distance of $4\delta$ of each other [CP01, Proposition 1.2]. So there exists an $s_1$ with $d(r'(s_1),r(0)) \le 4\delta$. Thus $d(r'(s_1),e)\le C_0+4\delta$. If $s_2$ is the number with $h(r'(s_2))=0$, it follows that $d(r'(s_2),e)\le C_1$. To see this, note that $C_0+4\delta \ge d(e,r'(s_1)) \ge |h(r'(s_1))|$ by the distance-like property of horofunctions. Since $h(r'(s_2))=0$ this implies that $d(r'(s_1),r'(s_2)) \le 4\delta + C_0$. The triangle inequality now implies $d(r'(s_2),e) \le C_1$ as claimed.

Thus $r'(s_2)\in S$. This shows that $\xi \in O(r'(s_2+n),t)$ for all $n\ge 0$ and all $t\ge 0$. Since $r'(s_2+n) \in \Gen_n(h,S)$ this proves the claim.

Let $t\ge t_0$ where $t_0$ is as in lemma \ref{lem:shadow}. That lemma implies 
\begin{eqnarray*}
|\Gen_n(h,S)| Ca^{-nD + 2Dt} &\ge& \sum_{g \in Gen_n(h,S)} \mu(O(g,t))\\
 &\ge& \mu\Big(\cup_{g \in \Gen_n(h,S)}O(g,t)\Big)\\
&\ge& \mu(O_h(e,C_0)).
\end{eqnarray*}
Thus
$$|\Gen_n(h,S)| \ge Ca^{nD - 2Dt}\mu(O_h(e,C_0)).$$
By theorems \ref{thm:growth} and \ref{thm:ps}, $a^{nD} = e^{e(\Gamma)n}$. This finishes the proof.
\end{proof}

\subsection{Periodicity}\label{sec:periodicity}

\begin{defn}
Let $\tM$ be the adjacency matrix of $\tsB$. So, the $(C,B)$-entry of $\tM$ equals $1$ if there is a directed edge in $\tsB$ from $B$ to $C$. It equals zero otherwise.
\end{defn}
\begin{lem}\label{lem:matrix}
For $n \ge 0$, $\veta_n=\tM\veta_{n+1}$. Here $\veta_n$ is as defined in \S \ref{s1} definition \ref{defn:veta}.
\end{lem}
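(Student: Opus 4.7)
The plan is to apply Lemma \ref{lem:etak} with $j=n$, $k=1$, and $E = B \cap \sH_n$ for each block $B \in \tsB$, which gives
\begin{equation*}
\eta_n(B) = \int_{\sH_{n+1}} |\Par^{-1}(h) \cap B|\, d\eta_{n+1}(h).
\end{equation*}
I would then partition $\sH_{n+1}$ according to block type, so that the integral collapses to a sum $\sum_{C\in\tsB} N(B,C)\, \eta_{n+1}(C)$ provided the integrand depends only on $\Block(h) = C$. The target formula $\vec{\eta}_n = \tM \vec{\eta}_{n+1}$ then reduces to the identification $N(B,C) = \tM_{BC}$.

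The combinatorial core of the proof is computing $|\Par^{-1}(h) \cap B|$. By Definition \ref{defn:tsB}, each block $B$ has a well-defined ``parent generator'' $a(B) \in A$ with $\Par(h') = a(B) h'$ for every $h' \in B$. Hence if $h' \in \Par^{-1}(h) \cap B$, we must have $h = \Par(h') = a(B) h'$, forcing $h' = a(B)^{-1} h$. So $|\Par^{-1}(h) \cap B|$ is either $0$ or $1$; it equals $1$ precisely when the candidate $a(B)^{-1} h$ genuinely has block type $B$. Lemma \ref{lem:determined} then ensures that whether $\Block(a(B)^{-1}h) = B$ depends only on $\Block(h)$, so $N(B,C) \in \{0,1\}$ is well-defined as the indicator that $a(B)^{-1} h$ lies in $B$ for some (equivalently, every) $h \in C \cap \sH_{n+1}$.

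Finally I would match this indicator with $\tM_{BC}$. Unwinding Definition \ref{defn:tsB}, $\tM_{BC}=1$ iff there is a directed edge from $C$ to $B$ in $\tsB$, which by the definition means that some horofunction in $B$ has its parent in $C$ — exactly the condition $N(B,C) = 1$ isolated above. Combining the two identifications yields $\eta_n(B) = \sum_{C} \tM_{BC}\, \eta_{n+1}(C)$, i.e.\ $\vec{\eta}_n = \tM \vec{\eta}_{n+1}$.

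The main obstacle I anticipate is purely notational: keeping straight the parent/child direction under $\Par^{-1}$ and reconciling it with the $(C,B)$-versus-$(B,C)$ indexing convention for the adjacency matrix. Once that bookkeeping is in place, the result is essentially a one-line consequence of the mass-transport identity packaged in Lemma \ref{lem:etak} together with the block-determinism provided by Lemma \ref{lem:determined}.
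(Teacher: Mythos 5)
Your proof is correct and follows the same route as the paper's: both apply Lemma~\ref{lem:etak} with $k=1$ and then identify the integrand $|\Par^{-1}(h)\cap B|$ with the $(B,\Block(h))$-entry of $\tM$, invoking Lemma~\ref{lem:determined} for the block-determinism. The paper compresses this identification into one sentence, and your proposal simply fills in the bookkeeping (each block has a single parent generator, so the count is $0$ or $1$, and by Lemma~\ref{lem:determined} it depends only on $\Block(h)$).
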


\begin{proof}
Let $B, C \in \sB$. By lemma \ref{lem:etak},
\begin{eqnarray*}
\eta_n(C) = \int |\Par^{-1}(h)\cap C| d\eta_{n+1}.
\end{eqnarray*}
Because $\tsB$ has no multiple edges, the right hand side is the sum of $\eta_{n+1}(B)$ for all $B \in \sB$ such that there is a directed edge in $\tsB$ from $B$ to $C$. This uses lemma \ref{lem:determined}. In other words,
$$\eta_n(C) = \sum_{B \in \sB} M_{C,B} \eta_{n+1}(B).$$
\end{proof}
The next lemma follows directly from the definitions.
\begin{lem}\label{lem:4.16}
Let $h \in \sH$ and $S$ be a finite subset of $G$ with $h(s)=0$ for all $s \in S$. Let $v$ be the $\sB \times 1$ vector with $$v(B)=\Big| \big\{ s \in S ~|~ \Block(s^{-1}h)=B \big\}\Big|.$$ 
Then $|\Gen_n(h,S)| = ||M^nv||_1$ where $||\cdot||_1$ denotes the $l^1$-norm.
\end{lem}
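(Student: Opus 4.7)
The plan is to prove the lemma by induction on $n$, after recasting the claim in vector form. Define the $\sB \times 1$ vector $v_n$ by
$$v_n(B) = \big|\{g' \in \Gen_n(h,S) : \Block((g')^{-1}h) = B\}\big|.$$
Since each element of $G$ has a unique $\Par_h$-parent, the sets $\Par_h^{-n}(g)$ ($g \in S$) are pairwise disjoint, so $|\Gen_n(h,S)| = ||v_n||_1$; also $v_0 = v$ by hypothesis. Thus the lemma reduces to establishing the recursion $v_{n+1} = M v_n$ for every $n \ge 0$.

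For the inductive step I would fix $g' \in \Gen_n(h,S)$ and relate its children in $G$ under $\Par_h$ to horofunction-children of $(g')^{-1}h$ under $\Par$. Unwinding the definitions, the map $g'' \mapsto (g'')^{-1}h$ gives a bijection between $\Par_h^{-1}(g')$ and $\Par^{-1}((g')^{-1}h)$: if $\Par_h(g'') = g'$ and one sets $a = (g'')^{-1}g' \in A$, then $a = \Par_{(g'')^{-1}h}(e)$ and hence $\Par((g'')^{-1}h) = (g')^{-1}h$, and every horofunction-child arises this way for a unique $a$. Consequently the block-type census of children of $g'$ in $G$ equals that of horofunction-children of $(g')^{-1}h$.

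To count these horofunction-children by block, I would combine Lemma \ref{lem:determined} (the block of $h_1$ determines the blocks of its children) with the observation in Definition \ref{defn:tsB} that for each block $C$ there is a unique generator $a_C \in A$ with $\Par(h_0) = a_C h_0$ for all $h_0 \in C$. Together these imply that the horofunction-children of any $h' \in B$ are in bijection with the blocks $C$ satisfying $M_{C,B}=1$, the unique child in $C$ being $a_C^{-1}h'$. Summing over $\Gen_n(h,S)$ grouped by block now yields
$$v_{n+1}(C) = \sum_{B \in \sB} v_n(B)\, M_{C,B} = (M v_n)(C),$$
closing the induction and so proving $|\Gen_n(h,S)| = ||v_n||_1 = ||M^n v||_1$. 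The main (minor) conceptual point beyond unwinding definitions is the edge-label uniqueness in Definition \ref{defn:tsB}: this is what guarantees each edge $B \to C$ contributes exactly one child (so the relevant matrix really is the adjacency matrix $M$ rather than a multi-edge variant), matching the author's assertion that the lemma follows directly from the definitions.
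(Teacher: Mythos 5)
Your proof is correct and is essentially the argument the paper intends: the paper offers no written proof (``follows directly from the definitions''), but the key counting step you supply — that for $k$ with $\Block(k)=B$ the $\Par$-children of $k$ split into exactly one per out-neighbor $C$ of $B$, via Lemma \ref{lem:determined} together with the no-multiple-edges observation in Definition \ref{defn:tsB} — is precisely the identity the paper itself invokes in proving Lemma \ref{lem:matrix}.
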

 
\begin{defn}
A nonnegative $\sB \times 1$ vector $v$ {\bf has the same growth rate as the Cayley graph $\Gamma$} if there is a constant $C>0$ such that
\begin{eqnarray*}
C^{-1} e^{e(\Gamma)n} \le ||M^n v||_1 \le C e^{e(\Gamma)n}
\end{eqnarray*}
for all $n\ge 0$. 
\end{defn}

\begin{cor}
There exists a finite collection of nonnegative $\sB \times 1$ vectors $v_1, v_2,...,v_n$, each of which has the same growth rate as $\Gamma$, such that the following holds. For any $\eta \in M_R(\sH_0)$ there exists nonnegative coefficients $t_1,...,t_n$ such that
$$\veta_0 = t_1v_1 + \cdots + t_nv_n$$
where $\veta_0$ is the $\sB \times 1$ vector defined by $\veta_0(B)=\eta(B)$.
\end{cor}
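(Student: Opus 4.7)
The plan is to construct the vectors $v_i$ directly using Proposition \ref{prop:generation} together with the Mass Transport Principle, rather than via the Perron--Frobenius theory of the matrix $\tM$. The key object is, for each $h \in \sH_0$, the nonnegative vector
\[
v_h \in \R^\sB_{\ge 0}, \qquad v_h(C) := \bigl|\{s \in S(h) : \Block(s^{-1} h) = C\}\bigr|,
\]
where $S(h) := B(e, C_1) \cap \{g \in G : h(g) = 0\}$ is the set from Proposition \ref{prop:generation}. By Lemma \ref{lem:4.16}, $\|\tM^n v_h\|_1 = |\Gen_n(h, S(h))|$, and by Proposition \ref{prop:generation} this is comparable to $\rho^n$ (with $\rho := e^{e(\Gamma)}$) uniformly in $h$, so $v_h$ has the same growth rate as $\Gamma$.

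I would then argue that, when $H$ and $W$ are taken large enough compared to $C_1$ and $\delta$ (consistent with Theorem \ref{thm:CPmain}), the vector $v_h$ depends only on the restriction of $h$ to some fixed ball $B(e, K)$. This is a hyperbolicity computation in the spirit of Lemma \ref{lem:determined}: forward gradient tubes from $e$ and from any $s \in B(e, C_1) \cap \{h = 0\}$ merge within bounded distance, so the block of $s^{-1} h$ is controlled by bounded-neighborhood data in $h$ around $e$. Since $h|_{B(e,K)}$ takes only finitely many values, $v_h$ assumes only finitely many values $v^{(1)}, \ldots, v^{(N)}$, each of which has the same growth rate as $\Gamma$.

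For the decomposition I will apply the Mass Transport Principle (Proposition \ref{prop:mtp}) on $(\sH_0, \eta)$ to
\[
F(h, h') := \bigl|\{s \in S(h) : s^{-1} h = h'\}\bigr| \cdot \mathbf{1}_{\Block(h') = B},
\]
for each $B \in \sB$. The sum over $h'$ equals $v_h(B)$; the sum over $h$ for fixed $h'$ with $\Block(h') = B$ counts $s \in B(e, C_1)$ with $h'(s^{-1}) = 0$, which via the substitution $t = s^{-1}$ equals $|S(h')|$, a quantity depending only on $B$ (call it $m_B$) and satisfying $m_B \ge 1$ since $e \in S(h')$. Mass transport therefore gives $\sum_i p_i v^{(i)}(B) = m_B\, \eta(B)$, where $p_i := \eta(\{h : v_h = v^{(i)}\}) \ge 0$. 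Defining $\tilde v^{(i)}(B) := v^{(i)}(B)/m_B$, a nonnegative rescaling by bounded and strictly positive factors that preserves the growth rate, yields $\veta_0 = \sum_i p_i \tilde v^{(i)}$. The corollary then follows with the $v_j$'s taken to be $\tilde v^{(1)}, \ldots, \tilde v^{(N)}$.

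The main technical obstacle is the bounded-neighborhood determination claim. For $m_B$ it suffices that $H, W \ge C_1$, which ensures $S(h) \subseteq \Block(h)$ so that block equivalence forces the horofunctions in a given block to vanish on the same subset of $B(e, C_1)$. Controlling each $\Block(s^{-1}h)$ for $s \in S(h)$ is more delicate and will require verifying that the block parameters can be taken large enough that all translates $s \cdot \Block(s^{-1}h)$ with $s \in B(e, C_1)$ are contained in a common bounded neighborhood of $e$ on which $h$ is pinned down by a bounded-range pattern.
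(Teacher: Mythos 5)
Your approach is the same as the paper's: define a locally computed vector $v_h$, check via Lemma \ref{lem:4.16} and Proposition \ref{prop:generation} that it has the right growth rate, and use the Mass Transport Principle to express $\veta_0$ as an $\eta$-average of the $v_h$. The differences are in the bookkeeping, and one of them creates a gap you flag but do not close. The paper puts the weight $1/m(g^{-1}h)$ \emph{inside} the definition of $v_h$ (i.e.\ $v_h(B) = \sum_g 1/m(g^{-1}h)$ over $g \in B(e,C_1)$ with $h(g)=0$ and $\Block(g^{-1}h)=B$) and uses the transport function $F(h,g^{-1}h)=1/m(g^{-1}h)$, so the return sum $\sum_h F(h,h')$ collapses to $m(h')/m(h')=1$ and one gets $\int v_h(B)\,d\eta(h)=\eta(B)$ directly, with no need for $m$ to be a function of the block. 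You instead use the unweighted count vector, obtain $\int v_h(B)\,d\eta(h)=\int m(h')\,\mathbf{1}[\Block(h')=B]\,d\eta(h')$, and then replace the integrand by a single constant $m_B$. This step is only valid if $m(h')$ depends only on $\Block(h')$, which as you correctly note requires $B(e,C_1)\cap\{h'=0\}\subseteq \Block(h')$, i.e.\ $W\ge C_1$. This is not guaranteed by the paper's choice of constants and must be arranged explicitly (one can enlarge $W,H$ since Theorem \ref{thm:CPmain} holds for all sufficiently large parameters, but then $\sB$, $\tM$, etc.\ change and this should be stated). The cleanest fix is simply to use the paper's weighted version of $F$, which sidesteps the issue entirely.

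Separately, the "main technical obstacle" you describe for finiteness of the set $\{v_h\}$ is a non-issue: each entry $v_h(C)$ is a nonnegative integer bounded by $|B(e,C_1)|$, and $\sB$ is finite, so only finitely many vectors $v_h$ can occur. No bounded-neighborhood determination of $\Block(s^{-1}h)$ by $h|_{B(e,K)}$ is needed for that claim. (The same trivial boundedness argument also handles the weighted version, whose entries lie in a fixed finite set of rationals.) The growth-rate comparison between the weighted and unweighted vectors is fine: the weights lie in $[|B(e,C_1)|^{-1},1]$ and $\tM^n$ is nonnegative, so $\|\tM^n \tilde v^{(i)}\|_1 \asymp \|\tM^n v^{(i)}\|_1$ with uniform constants.
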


\begin{proof}
For $h\in \sH_0$, let $m(h)$ be the number of elements $f \in G$ such that $h(f)=0$ and $d(f,g)\le C_1$ where $C_1$ is as in proposition \ref{prop:generation}. Let $v_h$ be the $\sB\times 1$ vector with $B$-entry given by
$$v_h(B) = \sum_{g} \frac{1}{m(g^{-1}h)}$$ 
where the sum is over all $g \in G$ such that $\Block(g^{-1}h)=B, d(g,e)\le C_1$ and $h(g)=0$. There are finitely many vectors of the form $v_h$. It follows from lemma \ref{lem:4.16} and proposition \ref{prop:generation} that $v_h$ has the same growth rate as $\Gamma$.

Define $F:R \to \R$ by $F(h,g^{-1}h)=\frac{1}{m(g^{-1}h)}$ if $d(g,e)\le C_1$, $h(g)=0$, and $\Block(g^{-1}h)=B$. $F(h,g^{-1}h)=0$ otherwise. The mass-transport principle (proposition \ref{prop:mtp}) applied to $F$ implies
$$\eta(B) = \int_{\sH_0} v_h(B) ~d\eta(h).$$
Since $B$ is arbitrary, this implies the corollary.

\end{proof}

\begin{thm}\label{thm:eigenvalue}
There exists a $p>0$ such that for all $\eta \in M_R(\sH_0)$, $\veta_0$ is an eigenvector of $M^p$ with eigenvalue $e^{e(\Gamma)p}$.
\end{thm}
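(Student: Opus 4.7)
The plan is to invoke Perron--Frobenius theory for the nonnegative matrix $M$, combined with the growth estimates of Proposition~\ref{prop:generation} and the decomposition from the corollary above. Let $\lambda := e^{e(\Gamma)}$. I first argue that the spectral radius $\rho(M)$ equals $\lambda$. For the upper bound $\rho(M)\le\lambda$, note that $(M^n)_{C,B}$ counts length-$n$ directed paths in $\tsB$ from $B$ to $C$, and each such path realizes an $n$-step parent geodesic in some horofunction; by the distance-like property these are genuine geodesics in $\Gamma$, so their total number starting at a bounded region is $\le C\lambda^n$ by Theorem~\ref{thm:growth}. The lower bound $\rho(M)\ge\lambda$ is immediate from the preceding corollary, which exhibits nonnegative $v_i$ with $\|M^nv_i\|_1\asymp\lambda^n$. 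Accordingly, decompose $\R^\sB = E_{=}\oplus E_{<}$ into the sums of generalized eigenspaces for eigenvalues of modulus $\lambda$ (the peripheral spectrum) and of strictly smaller modulus, with associated spectral projections $\pi_{=}$, $\pi_{<}$.

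The main step is to prove $\veta_0\in E_{=}$. Applying $\pi_{<}$ to the identity $\veta_0 = M^n\veta_n$ from Lemma~\ref{lem:matrix} yields $\pi_{<}\veta_0 = M^n\pi_{<}\veta_n$, so $\|\pi_{<}\veta_0\|_1 \le C\,n^r\mu^n\,\|\veta_n\|_1$ for some $\mu<\lambda$ and $r\ge 0$ determined by the Jordan structure of $M$ on $E_{<}$. It therefore suffices to show $\|\veta_n\|_1 \le C'\lambda^{-n}$, for then the right-hand side vanishes as $n\to\infty$. Applying Lemma~\ref{lem:etak} with $j=0$ and $E=\sH_0$ gives $1 = \eta(\sH_0) = \int |\Par^{-n}(h)|\,d\eta_n(h)$; tracing the $\sH$-valued dynamics through $h' = gh$, the cardinality $|\Par^{-n}(h)|$ equals the number of $n$-th generation descendants of $e$ in the parent tree of $h\in\sH_n$. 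The upper bound $|\Par^{-n}(h)|\le C\lambda^n$ follows from Theorem~\ref{thm:growth} (such descendants lie in $S(e,n)$), and the matching lower bound $|\Par^{-n}(h)|\ge c\lambda^n$ on an $\eta_n$-conull set follows from a mild adaptation of Proposition~\ref{prop:generation} in which the cluster $B(e,C_1)\cap h^{-1}(0)$ is replaced by $\{e\}$. Together these yield $\|\veta_n\|_1\asymp\lambda^{-n}$, and hence $\pi_{<}\veta_0 = 0$.

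To rule out Jordan blocks on the peripheral part, I use the uniform bound $\|M^n\veta_0\|_1 \le C(\eta)\lambda^n$ coming from the decomposition $\veta_0 = \sum t_i v_i$: a nonzero component of $\veta_0$ in a peripheral Jordan chain of size $k\ge 2$ would force $\|M^n\veta_0\|_1 \gtrsim n^{k-1}\lambda^n$, contradicting this bound. Thus $\veta_0$ is a sum of true eigenvectors of $M$ for peripheral eigenvalues. By standard Perron--Frobenius structure theory, the peripheral eigenvalues of a nonnegative matrix are precisely $\lambda\zeta$ for $\zeta$ a $d_\sC$-th root of unity, with one $d_\sC$ per dominant irreducible component $\sC$ of $M$. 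Setting $p := \mathrm{lcm}\{d_\sC : \rho_\sC = \lambda\}$ (which depends only on $M$, hence only on $(G,A)$), the matrix $M^p$ acts as multiplication by $\lambda^p$ on every such eigenvector, yielding $M^p\veta_0 = \lambda^p\veta_0$. The main obstacle I foresee is the $\eta_n$-a.e.\ lower bound on $|\Par^{-n}(h)|$: transferring Proposition~\ref{prop:generation} from a cluster near the horospherical $0$-set to the single point $e$ requires some care, while the remainder of the argument is routine bookkeeping with the Jordan form of $M$.
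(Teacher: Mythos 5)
Your overall strategy (identify the spectral radius $\rho(M)=e^{e(\Gamma)}$, kill the sub-peripheral component via the identity $\veta_0 = M^n\veta_n$, rule out peripheral Jordan blocks, take $p$ to be the lcm of the periods of the dominant components) is a legitimate variant of the paper's proof, which instead passes to a power $p$ making $M^p$ block-triangular with primitive diagonal blocks and then takes limits of $M^{kp}\eta'_{kp}/\|M^{kp}\eta'_{kp}\|_1$. But there is a genuine gap precisely at the step you flag as needing ``some care,'' and it is not a mild adaptation.

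You need $\|\veta_n\|_1 \le C'\lambda^{-n}$, and for this you need $|\Par^{-n}(h)|=|\Gen_n(h,\{e\})|\ge c\lambda^n$ on an $\eta_n$-conull set. This does \emph{not} follow from Proposition~\ref{prop:generation} by ``replacing the cluster $B(e,C_1)\cap h^{-1}(0)$ by $\{e\}$.'' The proof of that proposition crucially uses that the shadows of the $n$-th generation of the entire cluster $S$ cover the fixed boundary region $O_h(e,C_0)$ of positive quasiconformal measure; the argument is that any $\xi\in O_h(e,C_0)$ lies on a bi-infinite $h$-gradient that passes through \emph{some} point of $S$. Restricting to gradient lines through the single vertex $e$ gives no analogous coverage: the collection of $\xi$ that are endpoints of $h$-gradient lines through $e$ can be a null set, and so the shadow argument collapses. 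What Proposition~\ref{prop:generation} actually gives is $\sum_{s\in S}|\Gen_n(h,\{s\})|\ge c\lambda^n$, i.e.\ that \emph{some} $s\in S$ has the right growth, not that $e$ does. Put differently, in terms of Lemma~\ref{lem:4.16}, the vector $\delta_{\Block(h)}$ need not have growth rate $e(\Gamma)$; only the averaged vector $v_h$ is guaranteed to.

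The correct way to close this hole is what the paper does implicitly: apply the corollary not only at level $0$ but at level $n$. Using the isomorphism $M_R(\sH_n)\cong M_R(\sH_0)$ of Lemma~\ref{restriction}, the normalized vector $\eta'_n=\veta_n/\|\veta_n\|_1$ again decomposes as a convex combination $\sum_i s_i v_i$ of the same universal family $\{v_i\}$, each satisfying $\|M^nv_i\|_1\ge c\lambda^n$. Then $1=\|M^n\veta_n\|_1=\|\veta_n\|_1\,\|M^n\eta'_n\|_1\ge c\lambda^n\|\veta_n\|_1$, which is exactly the bound you want. With that substitution the rest of your spectral argument (the $\pi_{<}$ estimate, the Jordan-block exclusion via the polynomial-in-$n$ lower bound, and the choice of $p$) goes through and gives a proof genuinely different in presentation from, though morally parallel to, the one in the paper.
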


\begin{proof}
Since $M$ is nonnegative, there exists a $p>0$ so that $M^p$ is (after ordering $\sB$ appropriately) lower block triangular and each diagonal block is either a primitive matrix or a zero matrix. Thus for any nonnegative vector $v$ and any positive integer $k$ either $M^{kp}v$ limits on the zero vector (as $k\to \infty$) or
$$E(v) = \lim_{k \to \infty} \frac{M^{kp}v}{||M^{kp}v||_1}$$
exists and is an eigenvector of $M^p$. 

Let $v_1,\dots,v_n$ be as in the previous corollary. Since each $v_i$ has the same growth rate as $\Gamma$, $E(v_i)$ exists and has eigenvalue $e^{e(\Gamma)p}$. It will be simpler to work with normalized vectors. So, for $n \ge 0$, let $\eta'_n = \frac{\veta_n}{||\veta_n||_1}$. After scaling if necessary, we may assume that $||v_i||_1=1$ for all $i$.

For each $k\ge 0$, there exists nonnegative coefficients $t_{k,1}, t_{k,2},..., t_{k,n}$ so that
$$\eta'_{kp} = t_{k,1}v_1+ t_{k,2} v_2 + \cdots + t_{k,n}v_n.$$
 Since $t_{k,1} + \cdots + t_{k,n} = 1$, $0 \le t_{k,i} \le 1$ for all $k,i$. By lemma \ref{lem:matrix},
$$\veta_0 = \frac{M^{kp}\eta'_{kp}}{||M^{kp}\eta'_{kp}||_1} 
= \sum_{i=1}^n t_{k,i}\frac{M^{kp}v_i}{||M^{kp}v_i||_1} \frac{||M^{kp}v_i||_1}{||M^{kp}\eta'_{kp}||_1}.$$
Since $\eta'_{kp}$ is a convex sum of the vectors $\{v_i\}_{i=1}^n$ it has the same growth rate as $\Gamma$. In fact, there is a constant $C>0$ such that 
$$C^{-1} e^{e(\Gamma)kp} \le ||M^{kp}\eta'_{kp}||_1 \le C e^{e(\Gamma)kp}$$
for all $k \ge 0$ (e.g., take $C$ to be the maximum over all such constants occuring in the related inequalities for $v_1,\dots,v_n$). Thus there exists a subsequence $\{k_j\}$ of $\N$ such that for each $i$, the ratio $\frac{||M^{k_jp}v_i||_1}{||M^{k_jp}\eta'_{k_jp}||_1}$ converges as $j \to \infty$ to some constant $C_i$. Observe that 
\begin{eqnarray*}
\Big|\Big|\veta_0 - \sum_{i=1}^n t_{k_j,i} C_i E(v_i)\Big|\Big|_1 &\le & \sum_{i=1}^n t_{k_j,i} \Big|\Big| \frac{M^{k_jp}v_i}{||M^{k_jp}v_i||_1} \frac{||M^{k_jp}v_i||_1}{||M^{k_jp}\eta'_{k_jp}||_1} - C_i E(v_i) \Big|\Big|_1.
\end{eqnarray*}
The right hand side tends to zero as $j \to \infty$. By passing to another subsequence of $\{k_j\}$ if necessary, we may assume that for each $i$, $t_{k_j,i}$ converges (as $j\to \infty$) to a constant ${\bar t}_i$. Thus we have shown that
\begin{eqnarray*}
\veta_0 = \sum_{i=1}^n {\bar t_{i}} C_i E(v_i).
\end{eqnarray*}
Since each $E(v_i)$ is an eigenvector of $M^p$ with eigenvalue $e^{e(\Gamma)p}$ this proves the theorem.
\end{proof}

\begin{defn}\label{defn:alpha}
Let $\beta: \sH \to \sH$ be the map $\beta(h)=h-1$. So $\beta(\sH_k) =\sH_{k+1}$ for all $k \in \Z$. 

If $I,J \subset \Z$ are finite sets and $I \subset J$ then, as in lemma \ref{restriction}, the restriction map $\Res:M_R(\sH_J) \to M_R(\sH_I)$ is an isomorphism. Therefore, if $I$ and $J$ are {\it any} finite subsets of $\Z$, there is a natural isomorphism from $\kappa: M_R(\sH_I) \to M_R(\sH_J)$ obtained by following the inverse of the normalized restriction map from $M_R(\sH_I) \to M_R(\sH_{I \cup J})$ with the normalized restriction map from $M_R(\sH_{I \cup J}) \to M_R(\sH_J)$. 

Let $\eta \in M_R(\sH_I)$ and let $\kappa(\eta) \in M_R(\sH_{I+1})$ be the corresponding measure. Here $I+1=\{i+1|\, i\in I\}$. Define $\alpha: M_R(\sH_I) \to M_R(\sH_I)$ by $\alpha\eta =\kappa \eta \circ\beta$.
\end{defn}

The next corollary follows immediately from the above theorem and proposition \ref{prop:determinism}.
\begin{cor}\label{cor:periodic}
There exists a $p>0$ such that for every finite set $I \subset \Z$ and every $\eta \in M_R(\sH_I)$, $\alpha^p\eta=\eta$.
\end{cor}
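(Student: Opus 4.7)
My plan is to translate the claim $\alpha^p\eta=\eta$ into the language of block densities (definition \ref{defn:veta}) and deduce it from theorem \ref{thm:eigenvalue} applied not only to $\eta$ but to every iterate $\alpha^k\eta\in M_R(\sH_0)$. The substantive point will be that $\alpha$ shifts the block-density sequence by one index, modulo a normalizing scalar.

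I would first reduce to the case $I=\{0\}$. For any finite $I\subset\Z$, the normalized restriction/extension maps of lemma \ref{restriction} yield a natural isomorphism $\phi\colon M_R(\sH_I)\to M_R(\sH_0)$. A routine check from the definitions shows that $\phi$ intertwines the operators $\alpha$ on the two spaces --- both are built from the same shift $h\mapsto h-1$ and the same uniqueness-of-extension statement --- so it suffices to prove $\alpha^p=\mathrm{id}$ on $M_R(\sH_0)$.

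Next I would compute how $\alpha$ acts on the block-density sequence. The key geometric input is that the shift $\beta(h)=h-1$ preserves $R_B$-equivalence classes: by the constant-shift clause in the definition of $R_B$, the pair $(h,h-1)$ lies in $R_B$ (with constant $C=1$), so $\beta$ is a block-preserving bijection $\sH_k\to\sH_{k+1}$ for every $k$. Unwinding the definition of $\alpha$ through the unique extension $\omega_{n+1}\in M_R(\sH_{[0,n+1]})$ of $\eta$, one obtains that the level-$n$ block-density vector of $\alpha\eta$ equals
\[
\frac{1}{\|\veta_1\|_1}\,\veta_{n+1} \qquad (n \ge 0),
\]
and inductively the level-$n$ block-density vector of $\alpha^k\eta$ equals $\|\veta_k\|_1^{-1}\,\veta_{n+k}$ for every $k,n\ge 0$.

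Finally I would apply theorem \ref{thm:eigenvalue} not only to $\eta$ but to every $\alpha^k\eta\in M_R(\sH_0)$. Because the level-$0$ block-density vector of $\alpha^k\eta$ is a positive scalar multiple of $\veta_k$, the theorem forces $M^p\veta_k = e^{e(\Gamma)p}\veta_k$ for every $k\ge 0$. Combining this eigenvector relation with $\veta_n = M^p\veta_{n+p}$ (lemma \ref{lem:matrix}) gives $\veta_{n+p} = e^{-e(\Gamma)p}\veta_n$ for every $n\ge 0$. Taking $\ell^1$-norms and using $\|\veta_0\|_1=1$ yields $\|\veta_p\|_1 = e^{-e(\Gamma)p}$, so substituting into the shift formula shows that the level-$n$ block-density vector of $\alpha^p\eta$ equals $\veta_n$ for every $n$. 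Proposition \ref{prop:determinism} then gives $\alpha^p\eta=\eta$. The only nontrivial step is the block-density identity displayed above; once the normalizations in $\omega_{n+1}$, $\bar\eta_1$ and $\kappa$ are tracked carefully, it reduces to the block-preservation property of $\beta$.
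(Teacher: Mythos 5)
Your proposal is correct and matches the paper's intended argument; the paper simply states that the corollary ``follows immediately from the above theorem and proposition~\ref{prop:determinism},'' and you supply the details it leaves implicit. The one genuinely nontrivial point you handle well is applying theorem~\ref{thm:eigenvalue} to every $\alpha^k\eta$, not just to $\eta$ itself: from $M^p\veta_0=e^{e(\Gamma)p}\veta_0$ alone one cannot conclude $M^p\veta_k=e^{e(\Gamma)p}\veta_k$ (since $M^k$ need not be injective), so invoking the theorem for each $\alpha^k\eta$ is exactly what closes the gap, and your derivation $\overrightarrow{(\alpha^k\eta)}_n=\|\veta_k\|_1^{-1}\veta_{n+k}$ from the block-preservation of $\beta$ is the correct bookkeeping.
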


\subsection{$M_R(\sH_*)$ is isomorphic to $M_R(\sH_0)$}\label{sub:alpha}
Let $\sH_*=\{h \in \sH|\, h(e) \le 0\}=\sH_{[0,\infty)}$. It is endowed with the equivalence relation induced by the action of $G$ on $\sH$ restricted to $\sH_*$. Let $M_R(\sH_*)$ be the set of all Borel probability measures on $\sH_*$ that are invariant under the partial transformations of this relation. 

\begin{lem}\label{lem:isomorphic}
The restriction map $\Res:M_R(\sH_*) \to M_R(\sH_0)$, obtained by restricting $\eta \in M_R(\sH_*)$ to $\sH_0$ and normalizing, is an isomorphism.
\end{lem}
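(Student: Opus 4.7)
The plan is to construct an explicit inverse map $\Psi:M_R(\sH_0)\to M_R(\sH_*)$ and check that it is a two-sided inverse of $\Res$. The central technical point, handled first, is that for any $\eta\in M_R(\sH_0)$ the sequence of level masses $\eta_n(\sH_n)=||\veta_n||_1$ is summable. A direct unwinding of Definition~\ref{defn:alpha} shows that the block-density vector of $\alpha^r\eta\in M_R(\sH_0)$ at level $0$ is the normalization $\veta_r/||\veta_r||_1$; hence Theorem~\ref{thm:eigenvalue} applied to $\alpha^r\eta$ forces $M^p\veta_r=e^{e(\Gamma)p}\veta_r$ for every $0\le r<p$. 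Combined with the parent identity $\veta_r=M^p\veta_{r+p}$ from Lemma~\ref{lem:matrix} and with the proportionality $\veta_{r+p}/||\veta_{r+p}||_1=\veta_r/||\veta_r||_1$ coming from Corollary~\ref{cor:periodic} applied to $\alpha^r\eta$, this pins down $\veta_{r+p}=e^{-e(\Gamma)p}\veta_r$. So $||\veta_n||_1$ decays geometrically in $n$ and $T(\eta):=\sum_{n\ge 0}\eta_n(\sH_n)$ is finite.

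I then set $\Psi\eta:=T(\eta)^{-1}\sum_{n\ge 0}\eta_n$, a Borel probability measure on $\sH_*$. Any partial transformation $\phi$ of the equivalence relation on $\sH_*$ decomposes as a countable disjoint union of pieces $\phi_{n,m}$ whose domain lies in $\sH_n$ and image in $\sH_m$; because the definitions in Section~\ref{s1} give $\omega_N|_{\sH_k}=\omega_N(\sH_0)\,\eta_k$ for all $k\le N$, the $R$-invariance of the canonical $\omega_N\in M_R(\sH_{[0,N]})$ from Lemma~\ref{restriction} with $N\ge\max(n,m)$ transfers directly to $\Psi\eta(\operatorname{dom}\phi_{n,m})=\Psi\eta(\operatorname{im}\phi_{n,m})$; summing over $(n,m)$ yields $R$-invariance of $\Psi\eta$. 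Restricting $\Psi\eta$ to $\sH_0$ and normalizing recovers $\eta$, so $\Res\circ\Psi=\mathrm{Id}$.

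For the reverse composition I first check that $\Res$ is well defined, i.e.\ that $\tilde\eta(\sH_0)>0$ for every $\tilde\eta\in M_R(\sH_*)$. Since $\tilde\eta(\sH_*)=1$, some $n$ has $\tilde\eta(\sH_{[0,n]})>0$; then $\tilde\eta|_{\sH_{[0,n]}}/\tilde\eta(\sH_{[0,n]})\in M_R(\sH_{[0,n]})$, and the mass transport argument in the proof of Lemma~\ref{restriction} (applied to the uniformly finite-to-one map $f:\sH_{[0,n]}\to\sH_0$ used there) yields a uniform lower bound $\omega(\sH_0)\ge C^{-1}>0$ for any such $\omega$, whence $\tilde\eta(\sH_0)>0$. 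Setting $\eta=\Res(\tilde\eta)$, the normalized restriction $\tilde\eta|_{\sH_{[0,n]}}/\tilde\eta(\sH_{[0,n]})$ must equal $\omega_n$ by Lemma~\ref{restriction}; unwinding $\omega_N|_{\sH_k}=\omega_N(\sH_0)\,\eta_k$ gives $\tilde\eta|_{\sH_k}=\tilde\eta(\sH_0)\,\eta_k$ for every $k$, and summing with $\tilde\eta(\sH_*)=1$ forces $\tilde\eta(\sH_0)=T(\eta)^{-1}$, hence $\tilde\eta=\Psi\eta$. The main obstacle is the summability of $T(\eta)$: it requires applying Theorem~\ref{thm:eigenvalue} to every shift $\alpha^r\eta$ rather than only to $\eta$ itself, and combining this with both the parent relation and the periodicity to extract the contraction ratio $e^{-e(\Gamma)p}<1$.
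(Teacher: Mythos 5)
Your proof is correct and follows the same basic strategy as the paper's: construct the explicit inverse $\eta\mapsto T(\eta)^{-1}\sum_{k\ge 0}\eta_k$, with the crux being summability of $\sum_k\eta_k(\sH_k)=\sum_k\|\veta_k\|_1$. Where you add value is in actually justifying that summability. The paper asserts the geometric bound on $\|\veta_k\|_1$ ``by Theorem~\ref{thm:eigenvalue} and Lemma~\ref{lem:matrix},'' but those two facts alone (which speak only to $\veta_0$ and to the relation $M\veta_{n+1}=\veta_n$) do not immediately control $\veta_k$ for $k>0$, since $M$ need not be injective. Your route — identify $\veta_r/\|\veta_r\|_1$ as the level-$0$ block-density vector of $\alpha^r\eta$, apply Theorem~\ref{thm:eigenvalue} to each $\alpha^r\eta$, and combine with the proportionality forced by Corollary~\ref{cor:periodic} to extract $\veta_{r+p}=e^{-e(\Gamma)p}\veta_r$ — is a clean and complete way to close this gap. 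The verification of $R$-invariance of $\Psi\eta$ by decomposing partial transformations according to level, and the injectivity argument via $\omega_N|_{\sH_k}=\omega_N(\sH_0)\,\eta_k$, are also both correct and spell out what the paper dismisses as ``easy to check.'' One minor remark: $M^p\veta_r=e^{e(\Gamma)p}\veta_r$ actually holds for every $r\ge 0$ (not just $0\le r<p$), though restricting to one period as you do is all that is needed.
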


\begin{proof}
Given $\eta \in M_R(\sH_0)$, define $\eta_k$ as in subsection \ref{s1}. Define $\eta_* \in M_R(\sH_*)$ by
$$\eta_* = \frac{1}{C} \sum_{k \ge 0} \eta_k$$
where $C = \sum_{k\ge 0} \eta_k(\sH_k)$. By theorem \ref{thm:eigenvalue} and lemma \ref{lem:matrix}, it follows that there exists a constant $C_0>0$ such that 
$$C^{-1}_0e^{-e(\Gamma)k} \le \eta_k(\sH_k) =||\veta_k||_1 \le C_0e^{-e(\Gamma)k}$$
for all $k\ge 0$. Therefore $0 < C < \infty$ and $\eta_*$ is a well-defined probability measure. It is easy to check from the definition of $\eta_k$ that $\eta_*$ really is relation-invariant. So the map $\eta \to \eta_*$ is the inverse of the restriction map.
\end{proof}

\begin{defn}
Define $\alpha_*:M_R(\sH_*)\to M_R(\sH_*)$ by $\alpha_* = \Res^{-1}\circ\alpha\circ\Res$. 

Equivalently, for $\eta \in M_R(\sH_*)$ and $E$ a Borel subset of $\sH_*$ define
$$\alpha_* \eta(E) = \frac{\eta(\beta(E))}{\eta(\sH_{[1,\infty)})} = \frac{\beta^{-1}_*\eta(E)}{\eta(\sH_{[1,\infty)})}$$
where $\beta:\sH\to\sH$ is the map $\beta(h)=h-1$. 

Equivalently, for $\eta \in M_R(\sH_*)$ let $\eta^1$ be the normalized restriction of $\eta$ to $\sH_{[1,\infty)}$. Then $\alpha_* \eta = \eta^1 \circ \beta$. Similarly, if $\eta^k$ is the normalized restriction of $\eta$ to $\sH_{[k,\infty)}$ then $\alpha_*^k\eta = \eta^k \circ \beta^k$. So if $E \subset \sH_*$ is Borel then 
$$\alpha_*^k \eta(E) = \frac{\eta(\beta^k(E))}{\eta(\sH_{[k,\infty)})}= \frac{\beta^{-k}_*\eta(E)}{\eta(\sH_{[k,\infty)})}.$$

By abuse of notation, we will write $\alpha=\alpha_*$. 
\end{defn}

\subsection{Quasiconformal measures}\label{sub:qc}
In this section we show that if $\eta \in M_R(\sH_*)$ is $\alpha$-invariant then its projection to the boundary is quasiconformal. We use this to finish the proof of theorem \ref{thm1}.

\begin{defn}\label{def:phi}

For $g \in G$ and $h \in \sH$, define $\phi(g)h \in \sH$ by 
$$\big(\phi(g)h\big)(f)=h(g^{-1}f) - h(g^{-1}) + h(e).$$
Observe that for all $k \in \Z$, $\phi(g):\sH_k \to \sH_k$. In particular, this defines an action of $G$ on $\sH_*$.


\end{defn}

\begin{lem}
If $\eta \in M_R(\sH_*)$, $\alpha\eta=\eta$ and $k\ge 0$ then 
$$\eta(\sH_{[k,\infty)}) = e^{-e(\Gamma)k}.$$
\end{lem}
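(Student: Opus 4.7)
Set $f(k) := \eta(\sH_{[k,\infty)})$. The approach is to use $\alpha$-invariance to show $f$ is geometric in $k$, and then identify the common ratio as $e^{-e(\Gamma)}$ via theorem \ref{thm:eigenvalue}.

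First, by definition \ref{defn:alpha}, $\alpha\eta = \eta$ rewrites as $\eta(\beta E) = f(1)\,\eta(E)$ for every Borel $E \subset \sH_*$. Applied with $E = \sH_{[k,\infty)}$ (using $\beta(\sH_{[k,\infty)}) = \sH_{[k+1,\infty)}$), this gives $f(k+1) = f(1)f(k)$, hence $f(k) = f(1)^k$. It remains to show $f(1) = e^{-e(\Gamma)}$.

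Next, pass through the isomorphism $\Res:M_R(\sH_*)\to M_R(\sH_0)$ of lemma \ref{lem:isomorphic}: the measure $\eta_0 := \Res(\eta)$ satisfies $\alpha\eta_0 = \eta_0$. Since $\Par_h = \Par_{\beta h}$ and the definition of $\Block$ depends only on level differences, $\beta$ preserves block types; in particular $\beta(B\cap\sH_0) = B\cap\sH_1$ for every block $B\in\sB$. Unpacking definition \ref{defn:alpha} and evaluating $\alpha\eta_0 = \eta_0$ on each block then gives $\veta_0(B) = \veta_1(B)/\|\veta_1\|_1$, i.e.\ $\veta_1 = \|\veta_1\|_1\,\veta_0$, where $\veta_n$ denotes the block-density vector of $\eta_0$ from definition \ref{defn:veta}. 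Substituting into the recursion $\veta_0 = M\veta_1$ of lemma \ref{lem:matrix} yields $M\veta_0 = \|\veta_1\|_1^{-1}\veta_0$, so $\veta_0$ is a nonnegative eigenvector of $M$ with positive eigenvalue $\|\veta_1\|_1^{-1}$. Combined with theorem \ref{thm:eigenvalue} ($M^p\veta_0 = e^{e(\Gamma)p}\veta_0$), this forces $\|\veta_1\|_1^{-p} = e^{e(\Gamma)p}$, hence $\|\veta_1\|_1 = e^{-e(\Gamma)}$.

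Finally, the explicit formula $\eta = C^{-1}\sum_k(\eta_0)_k$ from the proof of lemma \ref{lem:isomorphic} gives $\eta(\sH_1)/\eta(\sH_0) = \|\veta_1\|_1$, while the first step shows the same ratio equals $f(1)$. Hence $f(1) = e^{-e(\Gamma)}$ and $\eta(\sH_{[k,\infty)}) = f(1)^k = e^{-e(\Gamma)k}$, as desired. The main obstacle is the translation in the third paragraph from the measure-theoretic invariance $\alpha\eta_0 = \eta_0$ into the vector identity $\veta_1 = \|\veta_1\|_1\veta_0$: this requires carefully unwinding the normalizations built into definition \ref{defn:alpha} and confirming $\beta$-invariance of block types. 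Once this linear-algebraic identity is in hand, the Perron-style content of theorem \ref{thm:eigenvalue} forces the eigenvalue and the remaining steps are routine.
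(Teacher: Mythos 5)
Your proof is correct and uses essentially the same machinery as the paper: lemma \ref{lem:matrix} and theorem \ref{thm:eigenvalue} to pin down the geometric ratio $e^{-e(\Gamma)}$. The paper's version is a bit more direct — it redefines $\veta_k$ as the vector with $B$-entry $\eta(B\cap\sH_k)$ for $\eta\in M_R(\sH_*)$ itself, observes immediately from $\alpha$-invariance and lemma \ref{lem:matrix} that each $\veta_k$ is an eigenvector of $M$ with eigenvalue $e^{e(\Gamma)}$ (theorem \ref{thm:eigenvalue}), reads off $\eta(\sH_k)=e^{e(\Gamma)}\eta(\sH_{k+1})$ from $\|M\veta_{k+1}\|_1=\|\veta_k\|_1$, and sums the geometric series — whereas you first establish $f(k)=f(1)^k$, then detour through $M_R(\sH_0)$ via $\Res$ to identify $f(1)=\|\veta_1\|_1$ and extract the eigenvalue. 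Your careful verification that $\beta$ fixes each $R_B$-class (via $\Par_h=\Par_{\beta h}$) is a point the paper leaves implicit, but the two arguments are mathematically the same route.
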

\begin{proof}
Let $\veta_k$ be the $\sB\times 1$ vector with $B$-entry equal to $\eta(B \cap \sH_k)$. Since $\eta$ is $\alpha$-invariant, lemma \ref{lem:matrix} implies that $\veta_k$ is an eigenvector of $M$. Theorem \ref{thm:eigenvalue} implies that the eigenvalue is $e^{e(\Gamma)}$. Lemma \ref{lem:matrix} also shows that $M\veta_{k+1}=\veta_k$ for all $k\ge 0$. Since $||\veta_k||_1=\eta(\sH_k)$ this implies that
$$\eta(\sH_k)=||\veta_k||_1=||M\veta_{k+1}||_1 =e^{e(\Gamma)}||\veta_{k+1}||=e^{e(\Gamma)}\eta(\sH_{k+1}).$$
Thus,
$$\eta(\sH_k) e^{e(\Gamma)k} = \eta(\sH_0).$$
Therefore
\begin{eqnarray*}
1 = \sum_{k\ge 0} \eta(\sH_k) = \frac{\eta(\sH_0)}{1- e^{-e(\Gamma)}}.
\end{eqnarray*}
Thus $\eta(\sH_0)=1- e^{-e(\Gamma)}$ and $\eta(\sH_k)=\eta(\sH_0)e^{-e(\Gamma)k} = e^{-e(\Gamma)k} - e^{-e(\Gamma)(k+1)}$. So,
$$\eta(\sH_{[k,\infty)}) = \sum_{i=k}^\infty \eta(\sH_i) = e^{-e(\Gamma)k}.$$
\end{proof}

\begin{lem}\label{lem:precise}
Let $\eta \in M_R(\sH_*)$ be $\alpha$-invariant. Then for any $g \in G$,
$$\frac{d\phi(g)_*\eta}{d\eta}(h) = e^{e(\Gamma)(h(e) -h(g)) }.  $$
\end{lem}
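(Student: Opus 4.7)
The plan is to decompose $\phi(g)$ into a $G$-action (handled by $R$-invariance of $\eta$) and a $\beta$-shift (handled by $\alpha$-invariance). A direct calculation from the definition gives $\phi(g)h = \beta^{j(h)}(gh)$, where $j(h) := h(g^{-1}) - h(e) = (gh)(e) - h(e)$. Note that $\phi(g)$ preserves the value at $e$, so it maps each level $\sH_k$ to itself and in particular preserves $\sH_*$.

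First I would extend $\eta$ to a $\sigma$-finite $G$-invariant measure $\tilde\eta$ on all of $\sH$ with $\tilde\eta|_{\sH_*} = \eta$. Such an extension exists because by the distance-like property every $h \in \sH$ has some $f \in G$ with $h(f)=0$, hence is $G$-related to an element of $\sH_0$, and $R$-invariance of $\eta$ guarantees the extension is well-defined. Next, using the previous lemma ($\eta(\sH_{[k,\infty)}) = e^{-e(\Gamma)k}$) and $\alpha$-invariance of $\eta$, one checks that $\eta(\beta^{-1}E) = e^{e(\Gamma)}\eta(E)$ for Borel $E \subset \sH_{[1,\infty)}$. Since $\beta$ commutes with the $G$-action, $\beta_*\tilde\eta$ remains $G$-invariant, and comparing with $\tilde\eta$ on $\sH_1$ determines the scaling factor uniformly, yielding $\beta_*\tilde\eta = e^{e(\Gamma)}\tilde\eta$.

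For the main computation, partition $\sH = \bigsqcup_{j\in\Z} A_j$ with $A_j := \{h \in \sH : h(g^{-1}) - h(e) = j\}$. For a continuous test function $F$, rewrite $\int F\circ \phi(g)\, d\tilde\eta = \sum_j \int_{A_j} F(\beta^j gh)\, d\tilde\eta(h)$, then apply $G$-invariance of $\tilde\eta$ (change variables $h \to g^{-1}h'$) to obtain $\sum_j \int_{gA_j} F(\beta^j h')\, d\tilde\eta(h')$, and finally apply the $\beta$-scaling (change variables $h' \to \beta^{-j}h''$) to get $\sum_j e^{e(\Gamma)j}\int_{\phi(g)A_j} F\, d\tilde\eta$. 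The sets $\phi(g)A_j = \beta^j gA_j$ partition $\sH$ since $\phi(g)$ is a bijection of $\sH$ with inverse $\phi(g^{-1})$. A direct computation verifies that if $h'' = \phi(g)h$ with $h\in A_j$, then $h''(e) = h(e)$ and $h''(g) = 2h(e) - h(g^{-1}) = h(e) - j$, so $j = h''(e) - h''(g)$. Thus $d\phi(g)_*\tilde\eta/d\tilde\eta\,(h'') = e^{e(\Gamma)(h''(e) - h''(g))}$, and restricting to $\sH_*$ (where $\phi(g)$ stays and $\tilde\eta$ equals $\eta$) gives the claim.

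The main obstacle is the extension step: although $\beta_*\tilde\eta = e^{e(\Gamma)}\tilde\eta$ is cleanly determined on $\sH_*$, one must verify uniformly that the same scaling factor propagates to the levels $\sH_k$ with $k<0$. This relies on the commutation of $\beta$ with the $G$-action and the consistency of $G$-orbit representatives between adjacent levels. An alternative is to work directly in $\sH_*$ using a case analysis on the sign of $j$ and on whether $gh$ lies in $\sH_*$, but that approach is combinatorially messier and duplicates what the extension handles uniformly.
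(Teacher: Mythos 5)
Your proposal is correct, and it takes a genuinely different route from the paper's. You decompose $\phi(g)h = \beta^{j}(gh)$ with $j = h(g^{-1})-h(e)$, lift $\eta$ to a $\sigma$-finite $G$-invariant measure $\tilde\eta$ on all of $\sH$, prove the eigenvalue equation $\beta_*\tilde\eta = e^{e(\Gamma)}\tilde\eta$, and then do a single change-of-variables computation. The paper instead stays entirely inside $\sH_*$: it fixes $i,j$, restricts to $\sH_i \cap g\sH_j$, and runs a two-case argument ($2i\ge j$ vs.\ $2i\le j$) so that all the intermediate sets remain in $\sH_*$ (or in an explicitly-constructed $\sH_{[-k,\infty)}$ where the restriction isomorphism applies). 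Your version is more conceptual and avoids the case split; the paper's version is more elementary and never leaves the space on which $\eta$ already lives.

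The one step you should spell out is the existence and uniqueness of the $\sigma$-finite $R$-invariant extension $\tilde\eta$ of $\eta$ from the cross-section $\sH_*$ to all of $\sH$, together with the fact that agreement of two such measures on a single level $\sH_k$ forces equality everywhere. These do hold: for any Borel $\theta:\sH\to\sH_*$ with $\theta(h)\in[h]_R$ and $\theta|_{\sH_*}=\mathrm{id}$, one can define $\tilde\eta(E)=\int_{\sH_*}|\theta^{-1}(h_0)\cap E|\,d\eta(h_0)$, check via the mass-transport principle (Proposition \ref{prop:mtp}, extended to $\sigma$-finite measures) that this is independent of $\theta$ and $R$-invariant, and note that the map $h\mapsto gh$ is a full-domain partial transformation, so $R$-invariance already gives $g_*\tilde\eta=\tilde\eta$. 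Uniqueness comes out of the same MTP identity. Since $\beta$ commutes with the $G$-action, $\beta_*\tilde\eta$ is again $G$-invariant, and your computation that $\beta_*\tilde\eta$ agrees with $e^{e(\Gamma)}\tilde\eta$ on $\sH_1$ (a cross-section) then pins down the global identity. With that filled in, the rest of your argument — the partition by $A_j$, the two changes of variables, and the identification $j = h''(e)-h''(g)$ on $\phi(g)A_j$ — is a valid and arguably cleaner derivation of the same Radon–Nikodym formula.
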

\begin{proof}
If $h \in \sH_i \cap g\sH_j$ then $h(e)=-i$ and $h(g)=-j$. Thus it suffices to prove that if $f: \sH \to \R$ is any Borel function and $i,j \in \Z$ then
$$\int_{\sH_i \cap g\sH_j} f(h) \,d\phi(g)_*\eta(h) = e^{-e(\Gamma)(i-j)} \int_{\sH_i \cap g\sH_j} f(h) \,d\eta(h).$$
This is trivial if $i< 0$. So assume $i \ge 0$. Rewrite the left hand side of the above equation as follows.
\begin{eqnarray*}
\int_{\sH_i \cap g\sH_j} f(h) \,d\phi(g)_*\eta(h) &=& \int_{\phi(g)^{-1}[\sH_i \cap g\sH_j]} f(\phi(g)h) \, d\eta(h)\\
&=& \int_{\beta^{i-j}g^{-1}[\sH_i \cap g\sH_j]} f(\phi(g)h) \,d\eta(h)\\
&=& \int_{\beta^{i-j}g^{-1}[\sH_i \cap g\sH_j]} f(\beta^{j-i} g h) \,d\eta(h)\\
&=& \int_{g^{-1}\sH_i \cap \sH_j} f(gh) \,d\beta^{j-i}_*\eta(h).
\end{eqnarray*}
The third equation occurs since if $h \in \beta^{i-j}g^{-1}[\sH_i \cap g\sH_j] = g^{-1}\sH_{2i-j} \cap \sH_i$ then $\phi(g)h = \beta^{j-i}gh$.

{\bf Case 1}. Suppose $2i\ge j$. Then for any $h\in g^{-1}\sH_i \cap \sH_j$, both $h$ and $gh$ are in $\sH_{[j-i,\infty)}$. Note $\beta^{j-i}_* \eta \in M_R(\sH_{[j-i,\infty)})$. So, relation-invariance implies that 
\begin{eqnarray*}
\int_{g^{-1}\sH_i \cap \sH_j} f(gh) \,d\beta^{j-i}_*\eta(h) &=& \int_{\sH_i \cap g\sH_j} f(h) \,d\beta^{j-i}_*\eta(h).
\end{eqnarray*}
By definition, $\beta^{j-i}_*\eta(E) = \alpha^{i-j}\eta(E) \eta(\sH_{[i-j,\infty)})$ for any Borel $E \subset \sH_*$ (see \S \ref{sub:alpha} for the definition). Thus the above equals 
$$ \eta(\sH_{[i-j,\infty)})\int_{\sH_i \cap g\sH_j} f(h) \,d\alpha^{i-j}\eta.$$
Since $\eta$ is $\alpha$-invariant, the previous lemma now implies
$$\int_{\sH_i \cap g\sH_j} f(h) \,d\phi(g)_*\eta(h) = e^{-e(\Gamma)(i-j)} \int_{\sH_i \cap g\sH_j} f(h) \,d\eta(h).$$
This finishes the lemma in case $2i\ge j$.

{\bf Case 2}. Suppose $2i \le j$. Let $k = j-i \ge 0$. Let $\eta^k \in M_R(\sH_{[-k,\infty)})$ be the measure whose restriction to $M_R(\sH_{[0,\infty)})$ is $\eta$. This uniquely defines $\eta^k$ by an argument similar to the proof of lemma \ref{restriction}. 

By definition, if $E \subset \sH_{[0,\infty)}$ then
$$\alpha^{-k}\eta(E) = \eta^k(\beta^{-k}E).$$
If $E \subset \sH_{[k,\infty)}$ then $\beta^{-k}(E) \subset \sH_*$. Therefore, $\eta^k(\beta^{-k}(E)) =\eta(\beta^{-k}(E))\eta^k(\sH_*)=\beta^k_*\eta(E)\eta^k(\sH_*)$. By an argument similar to the one in the previous lemma, $\eta^k(\sH_*) =  e^{-e(\Gamma)k}$. Thus for $h \in \sH_{[k,\infty)}$,
\begin{eqnarray*}
d(\beta_*^{j-i}\eta )(h) &=& d(\beta^k_*\eta)(h)\\
& =& \frac{d (\beta^k_* \eta^k ) (h)}{\eta^k(\sH_*)}\\
&=& \frac{d\alpha^{-k}\eta(h) }{\eta^k(\sH_*)} = e^{-e(\Gamma)(i-j)}d\eta(h).
\end{eqnarray*}
Now $2i \le j$ implies $g^{-1}\sH_i \cap \sH_j \subset \sH_{[j-i,\infty)} = \sH_{[k,\infty)}$. So 
\begin{eqnarray*}
\int_{\sH_i \cap g\sH_j} f \, d\phi(g)_*\eta &=& \int_{g^{-1}\sH_i \cap \sH_j} f(gh) \,d(\beta^{j-i}_*\eta)(h)\\
&=&e^{-e(\Gamma)(i-j)}\int_{g^{-1}\sH_i \cap \sH_j} f(gh)\, d\eta(h).
\end{eqnarray*}
Since $j\ge 2i \ge i \ge 0$, for all $h \in g^{-1}\sH_i \cap \sH_j$ we have that $h$ and $gh$ are in $\sH_*$. So $g$ restricted to $g^{-1}\sH_i \cap \sH_j$ is a partial transformation. Since $\eta$ is relation-invariant, this implies that the above equals
$$e^{-e(\Gamma)(i-j)}\int_{\sH_i \cap g\sH_j} f(h)\, d\eta(h).$$
This finishes the case $2i\le j$ and hence, the lemma.
\end{proof}

\begin{lem}
The above lemma remains true if $\eta \in M_R(\sH_0)$ and is $\alpha$-invariant.
\end{lem}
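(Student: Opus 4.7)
My plan is to reduce this lemma to the previous one via the isomorphism $\Res: M_R(\sH_*) \to M_R(\sH_0)$ established in Lemma \ref{lem:isomorphic}. Given $\eta \in M_R(\sH_0)$ that is $\alpha$-invariant, let $\tilde\eta = \Res^{-1}(\eta) \in M_R(\sH_*)$ be the corresponding measure. By the definition of $\alpha_*$ (which we are also denoting $\alpha$) via conjugation by $\Res$, the measure $\tilde\eta$ is $\alpha$-invariant in $M_R(\sH_*)$. Hence Lemma \ref{lem:precise} applies to $\tilde\eta$ and gives
$$\frac{d\phi(g)_*\tilde\eta}{d\tilde\eta}(h) = e^{e(\Gamma)(h(e) - h(g))}$$
for $\tilde\eta$-a.e.\ $h \in \sH_*$.

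The key observation linking the two statements is that $\phi(g)$ preserves each level set $\sH_k$; in particular it preserves $\sH_0$. Indeed, if $h(e)=0$ then $(\phi(g)h)(e) = h(g^{-1}) - h(g^{-1}) + h(e) = 0$. Consequently, for any Borel $E \subset \sH_0$ we have $\phi(g)^{-1}E \subset \sH_0$, and since $\eta$ is the normalized restriction of $\tilde\eta$ to $\sH_0$,
$$\phi(g)_*\eta(E) = \eta(\phi(g)^{-1}E) = \frac{\tilde\eta(\phi(g)^{-1}E)}{\tilde\eta(\sH_0)} = \frac{\phi(g)_*\tilde\eta(E)}{\tilde\eta(\sH_0)}.$$

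Therefore, on $\sH_0$, both $\phi(g)_*\eta$ and $\eta$ are obtained from $\phi(g)_*\tilde\eta$ and $\tilde\eta$ by the same normalization constant $\tilde\eta(\sH_0)^{-1}$. The two constants cancel in the Radon-Nikodym derivative, yielding, for $\eta$-a.e.\ $h \in \sH_0$,
$$\frac{d\phi(g)_*\eta}{d\eta}(h) = \frac{d\phi(g)_*\tilde\eta}{d\tilde\eta}(h) = e^{e(\Gamma)(h(e) - h(g))},$$
as required. I do not expect a genuine obstacle here; the only point that requires care is checking that the isomorphism $\Res$ intertwines the two versions of $\alpha$, but this is built into the definition of $\alpha_*$ in \S\ref{sub:alpha}, and that $\phi(g)$ preserves the level $\sH_0$, which is immediate from Definition \ref{def:phi}.
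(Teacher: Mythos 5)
Your proof is correct and is essentially the same as the paper's (which simply says: let $\eta_*\in M_R(\sH_*)$ have normalized restriction $\eta$ to $\sH_0$ and apply the previous lemma). You have simply supplied the implicit details — that $\Res$ intertwines the two $\alpha$'s and that $\phi(g)$ preserves $\sH_0$ so the normalization constants cancel — which are exactly the points the paper leaves to the reader.
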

\begin{proof}
Let $\eta_* \in M_R(\sH_*)$ be such that the normalized restriction of $\eta_*$ to $\sH_0$ equals $\eta$. The previous lemma applied to $\eta_*$ implies this lemma. 
\end{proof}

\begin{lem}\label{lem:quasiconformal}
If $\eta \in M_R(\sH_0)$ is $\alpha$-invariant then $\pi_*\eta$ is quasiconformal. 
\end{lem}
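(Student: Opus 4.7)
The plan is to transfer the Radon–Nikodym computation of Lemma \ref{lem:precise} from $\sH_0$ down to $\partial\Gamma$ via the map $\pi$, and then quote Theorem \ref{thm:CP1} to control the resulting conditional densities.

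First I would check that pushing $\phi(g)_*\eta$ forward by $\pi$ recovers the $G$-translate of $\pi_*\eta$. Since $\phi(g)h$ and $gh$ differ by the additive constant $h(e)-h(g^{-1})$, they have the same $h$-gradient rays and therefore the same point at infinity. Combined with the fact that $\pi(gh)=g\pi(h)$ for the natural $G$-action on $\sH$, this gives
\[
\pi_*(\phi(g)_*\eta)\;=\;g_*(\pi_*\eta),
\]
which shows at least that $\pi_*\eta$ is $G$-quasi-invariant, and identifies $d(g_*\pi_*\eta)/d(\pi_*\eta)$ as the conditional expectation of $d\phi(g)_*\eta/d\eta$ along the fibers of $\pi$.

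Next I would apply Lemma \ref{lem:precise} (extended to $M_R(\sH_0)$ via the preceding lemma). For $h\in\sH_0$ we have $h(e)=0$, so, taking the base of exponentiation to be $a=e$, the formula becomes
\[
\frac{d\phi(g)_*\eta}{d\eta}(h)\;=\;e^{-e(\Gamma)\,h(g)}.
\]
Now I invoke Theorem \ref{thm:CP1}: if $h,h_\xi\in\sH_0$ both satisfy $\pi(h)=\pi(h_\xi)=\xi$, then $\|h-h_\xi\|_\infty\le 64\delta$, so $|h(g)-h_\xi(g)|\le 64\delta$. Consequently
\[
e^{-64\delta\,e(\Gamma)}\,e^{-e(\Gamma)\,h_\xi(g)}\;\le\;\frac{d\phi(g)_*\eta}{d\eta}(h)\;\le\;e^{\,64\delta\,e(\Gamma)}\,e^{-e(\Gamma)\,h_\xi(g)}.
\]
The same two-sided bound therefore survives averaging over the fiber $\pi^{-1}(\xi)$, and yields
\[
C^{-1}\,e^{\,e(\Gamma)(h_\xi(e)-h_\xi(g))}\;\le\;\frac{d(g_*\pi_*\eta)}{d\pi_*\eta}(\xi)\;\le\;C\,e^{\,e(\Gamma)(h_\xi(e)-h_\xi(g))},
\]
with $C=e^{64\delta\,e(\Gamma)}$ and $h_\xi(e)=0$. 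Choosing $a=e$ and $D=e(\Gamma)$ in Definition \ref{def:qc}, this is exactly the quasiconformality condition.

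The main step to be careful with is the passage from the bound on $d\phi(g)_*\eta/d\eta$ (which lives on $\sH_0$) to the bound on $d(g_*\pi_*\eta)/d\pi_*\eta$ (which lives on $\partial\Gamma$): it uses a disintegration of $\eta$ over $\pi$ and the fact that the bounds hold pointwise for every $h\in\pi^{-1}(\xi)\cap\sH_0$, so that they survive conditional expectation. The identification $\pi_*\phi(g)_*=g_*\pi_*$ and the uniform bound from Theorem \ref{thm:CP1} are the two ingredients that make this work; everything else is bookkeeping.
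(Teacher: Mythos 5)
Your proof is correct and takes essentially the same route as the paper, which compresses the argument into "follows immediately from the previous lemma and Definition \ref{def:qc}." Your write-up simply makes explicit the two things the paper leaves implicit: the identity $\pi_*(\phi(g)_*\eta)=g_*(\pi_*\eta)$, and the disintegration-over-$\pi$/conditional-expectation step by which the pointwise density bound on $\sH_0$ (uniformly controlled on $\pi$-fibers via Theorem \ref{thm:CP1}) descends to a bound on $d(g_*\pi_*\eta)/d\pi_*\eta$ on $\partial\Gamma$.
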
 

\begin{proof}
This follows immediately from the previous lemma and definition \ref{def:qc}.
\end{proof}

To prove theorem \ref{thm1} we will need:

\begin{thm}[CP01, proposition 5.5]\label{thm:CP2}
 The map $\pi: \sH_0 \to \partial \Gamma$ is uniformly finite-to-1. That is, there exists a constant $C = C(G,A)$ such that for every $\xi \in \partial \Gamma$, $|\pi^{-1}(\xi)| \le C$.
\end{thm}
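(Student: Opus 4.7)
The plan is to combine two results from the excerpt: Theorem \ref{thm:CP1}, which shows that two elements of a fiber of $\pi$ differ in sup norm by at most $64\delta$, and Proposition \ref{prop4.4}, which reconstructs a horofunction on a large ball from a bounded amount of data along a geodesic ray to its point at infinity. A direct counting argument then yields a uniform fiber bound.

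First I would fix $\xi \in \partial \Gamma$ and choose, once and for all, a discrete geodesic ray $r:[0,\infty) \to \Gamma$ with $r(0)=e$ and $r(\infty)=\xi$; such an $r$ exists in any proper hyperbolic space. The central observation is that this single ray $r$ can serve \emph{every} horofunction $h \in \pi^{-1}(\xi)$, because Proposition \ref{prop4.4} only requires $r(\infty) = \pi(h)$, not that $r$ be an $h$-gradient ray. For each $h \in \pi^{-1}(\xi)$ and each nonnegative integer $t$, set
$$R^h_t := \big\{g \in G : h(g) = h(r(t))\big\} \cap B\big(r(t),16\delta\big).$$
By Proposition \ref{prop4.4} (applied with $n=0$), the pair $\Phi_t(h) := (h(r(t)), R^h_t)$ recovers $h$ on $B(e,\, t-16\delta)$ via $h(g) = h(r(t)) + d(g, R^h_t)$.

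Given pairwise distinct $h_1,\dots,h_N \in \pi^{-1}(\xi)$, I would pick $R > 0$ large enough that the restrictions $h_i|_{B(e,R)}$ are pairwise distinct and then fix an integer $t > R + 16\delta$; the pairs $\Phi_t(h_1), \dots, \Phi_t(h_N)$ must then be pairwise distinct as well. Theorem \ref{thm:CP1} forces $|h_i(r(t)) - h_1(r(t))| \le 64\delta$, so the first coordinate of $\Phi_t$ takes at most $128\delta + 1$ integer values, while left-invariance of the word metric gives $|B(r(t),16\delta) \cap G| = |B(e,16\delta)|$, so the second coordinate lies in a set of at most $2^{|B(e,16\delta)|}$ subsets. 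Hence
$$N \;\le\; (128\delta + 1)\cdot 2^{|B(e,16\delta)|} \;=:\; C(G,A),$$
a constant independent of $\xi$ and of $N$, and letting $N$ vary yields $|\pi^{-1}(\xi)| \le C(G,A)$ uniformly in $\xi$.

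The step that most deserves attention is verifying that a single ray $r$ (chosen from $\xi$, not from any particular $h$) is legitimate input to Proposition \ref{prop4.4} for every $h$ in the fiber; this is really just parsing of the proposition's hypotheses. Once that is noticed, the rest is routine. Notably, the argument avoids the symbolic coding of \S \ref{symbolic coding} and Patterson-Sullivan theory, relying only on the sup-norm bound on fibers and the gradient-ray reconstruction formula.
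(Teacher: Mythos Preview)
The paper does not prove this statement: it is quoted verbatim as [CP01, proposition 5.5] and used as a black box, so there is no ``paper's own proof'' to compare against. Your argument is therefore supplying a proof where the paper gives only a citation.

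Your proof is correct. The key point you isolate---that Proposition~\ref{prop4.4} applies to any geodesic ray landing at $\pi(h)$, not just $h$-gradient rays---is exactly what makes a single ray $r$ serve the entire fiber, and the rest is a clean pigeonhole: the data $\Phi_t(h) = (h(r(t)), R^h_t)$ determines $h$ on $B(e, t-16\delta) \supset B(e,R)$, the first coordinate ranges over at most $128\delta+1$ integers by Theorem~\ref{thm:CP1}, and the second over at most $2^{|B(e,16\delta)|}$ subsets by left-invariance. Since the resulting bound is independent of $t$, the dependence of $t$ on the particular finite collection $h_1,\dots,h_N$ is harmless. One small point worth making explicit in a final write-up is that $\delta$ is taken to be an integer (as the paper stipulates at the start of \S\ref{sec:thm1}), so that the count $128\delta+1$ of integer values in $[h_1(r(t))-64\delta,\,h_1(r(t))+64\delta]$ is literally correct; and that $r(t)\in G$ for integer $t$ because $r(0)=e$ and edges have unit length, so $|B(r(t),16\delta)\cap G|=|B(e,16\delta)|$ on the nose.

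This is essentially the argument in [CP01] as well, though the original is phrased slightly differently. Your version has the virtue of being entirely self-contained given the two results (Theorem~\ref{thm:CP1} and Proposition~\ref{prop4.4}) already imported into the present paper.
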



\begin{proof}[Proof of theorem \ref{thm1}]

Let $\eta_1,...,\eta_n$ be any collection of distinct ergodic measures in $M_R(\sH_0)$. It suffices to show that there is a constant $C >0$ (depending only on $(G,A)$) such that $n \le C$. For each $i$, let 
$$\nu_i = \frac{1}{p} \sum_{j=0}^{p-1} \alpha^j \eta_i.$$
Observe that at least $n/p$ of the measures $\nu_1,...,\nu_n$ are distinct. So, after renumbering if necessary, we may assume that there is a number $m \ge n/p$ such that $\nu_1,...,\nu_m$ are distinct. By corollary \ref{cor:periodic}, each $\nu_i$ is $\alpha$-invariant. The construction implies that they are pairwise mutually singular. So there exists Borel sets $E_1,\dots,E_m \subset \sH_0$ such that $\nu_i(E_j) = \delta^i_j$ for all $1\le i,j \le m$, where $\delta^i_j$ is the Dirac-$\delta$ symbol.

By lemma \ref{lem:quasiconformal} each $\pi_*(\nu_i)$ is quasiconformal. By theorem \ref{thm:ps}, the pushforward measures $\pi_*\nu_i$ on $\partial \Gamma$ are all equivalent. Therefore, $\pi_*\nu_i(\pi(E_j))=1$ for all $i,j$. This implies that there is a point $\xi \in \partial \Gamma$ such that the inverse image $\pi^{-1}(\xi)$ has nontrivial intersection with all of the sets $E_1,...,E_m$. By theorem \ref{thm:CP2}, the number of preimages of $\xi$ is bounded by some constant $C>0$ that depends only on $(G,A)$. Thus $n/p\le m \le C$ which implies $n \le pC$.
\end{proof}

\section{Proof of Theorem \ref{thm:main}}\label{sec:part2}
Let $(X,\mu)$ be a standard Borel probability space on which $G$ acts by measure-preserving Borel transformations. $G$ acts diagonally on the product $\sH \times X$. This action induces an equivalence relation on $\sH\times X$ and, by restriction, on $\sH_I \times X$ for any $I \subset \Z$ ($\sH_I$ is defined in subsection \ref{s1}). Let $M_R(\sH_I\times X)$ denote the space of Borel probability measures on $\sH_I\times X$ that are invariant under this relation.

As in subsection \ref{s1}, the map $\Res:M_R(\sH_{[-k,k]} \times X) \to M_R(\sH_0\times X)$ obtained by restricting $\omega \in M_R(\sH_{[-k,k]} \times X)$ to $\sH_0\times X$ and normalizing is an isomorphism. We can now define a map $\alpha:M_R(\sH_0\times X) \to M_R(\sH_0\times X)$ in a manner analogous to definition \ref{defn:alpha}. That is, let $\kappa \omega \in M_R(\sH_1 \times X)$ be the measure obtained from $\omega \in M_R(\sH_0\times X)$ by following the inverse of the restriction map from $M_R(\sH_0\times X)\to M_R(\sH_{[0,1]} \times X)$ with the restriction map from  $M_R(\sH_{[0,1]} \times X) \to  M_R(\sH_1\times X)$. Then for any Borel $E\subset \sH_0\times X$,
$$\alpha\omega(E) := \kappa \omega(\beta \times 1_X(E))$$
where $1_X: X \to X$ denotes the identity map and $\beta:\sH\to\sH$ is the map $\beta(h)=h-1$. In a similar manner, we can define $\alpha: M_R(\sH_* \times X) \to M_R(\sH_* \times X)$. 

In the next section we will prove:
\begin{thm}\label{thm:step1}
There exists a $q>0$ such that the following holds. Let $\eta \in M_R(\sH_0)$. If $\lambda \in M_R(\sH_0\times X)$ is absolutely continuous to $\eta \times \mu$ then $\alpha^i\lambda=\lambda$ for some $0< i \le q$.
\end{thm}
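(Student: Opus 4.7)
The plan is to transplant the proof of Corollary \ref{cor:periodic} onto the product $\sH_0 \times X$. The hypothesis $\lambda\ll\eta\times\mu$ plays the role that lets the finite-dimensional Perron--Frobenius input of \S\ref{sec:periodicity} survive when scalar block densities are replaced by $X$-measure-valued block densities; without it the conclusion fails, as the $\Z/2\Z$-quotient of the free group following Theorem \ref{thm:main} shows.

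Concretely, first extend $\lambda$ to $\tilde\lambda\in M_R(\sH_*\times X)$ by the verbatim analogue of Lemma \ref{lem:isomorphic}; absolute continuity transfers, so $\tilde\lambda\ll\tilde\eta\times\mu$. For each $n\ge 0$ and $B\in\sB$ form the finite positive $X$-measure $\lambda_n^B(E):=\tilde\lambda((B\cap\sH_n)\times E)$, assembled into the $\sB$-indexed vector $\vec\lambda_n$. Applying the mass-transport principle to the product parent map $\Par(h,x)=(a(C)^{-1}h,a(C)^{-1}x)$, $C=\Block(h)$ (where $a(C)\in A$ is the generator attached to $C$ via Definition \ref{defn:tsB}), each edge $B\to C$ of $\tsB$ supplies exactly one preimage (by Lemma \ref{lem:determined}), giving the product analogue of Lemma \ref{lem:matrix}:
$$\lambda_n^C \;=\; \sum_{B\in\sB} M_{C,B}\,a(C)_{*}\lambda_{n+1}^B.$$
Writing $f_n^B:=d\lambda_n^B/d\mu$ (well-defined by $\lambda_n^B\ll\mu$) and using $G$-invariance of $\mu$, this becomes the pointwise recurrence $f_n^C(x)=\sum_B M_{C,B}\,f_{n+1}^B(a(C)^{-1}x)$. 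Now I would rerun the proof of Theorem \ref{thm:eigenvalue} on the Banach lattice $\bigoplus_{B\in\sB} L^1(X,\mu)$ with the positive operator $\mathbb{M}$ encoding the above recurrence in place of the matrix $M$. Absolute continuity forces $\|\vec f_n\|_{L^1}=\tilde\lambda(\sH_n\times X)$ to decay at exactly the rate $e^{-e(\Gamma)n}$, so $\vec f_0$ has the $\Gamma$-growth rate under $\mathbb{M}^{kp}$; the extremal decomposition (analog of the corollary preceding Theorem \ref{thm:eigenvalue}, obtained by integrating suitable measure-valued analogues of the vectors $v_h$ against $\lambda$) followed by the same subsequential Cesaro argument then places $\vec f_0$ in the $e^{e(\Gamma)p}$-eigenspace of $\mathbb{M}^p$, for the same $p$ as in Theorem \ref{thm:eigenvalue}. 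Combined with the straightforward product analog of Proposition \ref{prop:determinism}---$\lambda$ is determined by $\vec f_0$ via cylinders $Cyl(h,F)\times E$ and Lemma \ref{lem:determined}---this eigenvector identity is equivalent to $\alpha^p\lambda=\lambda$, giving $q=p$.

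The main obstacle is this Perron--Frobenius step in infinite dimensions. In Theorem \ref{thm:eigenvalue}, finite-dimensionality of $\R^\sB$ is used decisively when extracting a convergent subsequence from the scalar coefficients $t_{k,i}$ and when upgrading convergence of the individual normalized generators $M^{kp}v_i/\|M^{kp}v_i\|_1$ to convergence of their combination. Replacing $\R^\sB$ by $\bigoplus_B L^1(X,\mu)$ introduces the cocycle $a(C_0)a(C_1)\cdots a(C_{k-1})$ acting on $X$ along each descending parent chain, and lifting norm convergence to $L^1$-convergence of the densities $f_n^B$ requires uniform control of how this cocycle distributes mass on $X$. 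I expect this is precisely the content of the horospherical lemma the author defers to \S\ref{section:horospherical}: a refinement of Proposition \ref{prop:generation} describing the asymptotic distribution on $(X,\mu)$ of the group elements realizing parent chains, sufficient to promote the limiting Cesaro averages in the Perron--Frobenius argument to genuine convergence in the measure-valued setting.
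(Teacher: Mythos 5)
Your proposal takes a genuinely different route from the paper, and the route has a gap that you yourself flag but do not close. The paper does \emph{not} try to run the Perron--Frobenius machinery of \S\ref{sec:periodicity} over the Banach lattice $\bigoplus_B L^1(X,\mu)$. Instead it argues by contradiction using a concrete positivity statement --- the Key Lemma \ref{lem:key}, proved in \S\ref{section:horospherical} --- which says that for $\alpha$-invariant $\eta$ there are recurrent blocks $B,C$, an element $\bg\in G$, and two \emph{distinct} integers $s\ne t$ with $\eta(\bg\sH_s\cap B\cap\bg C)>0$ and $\eta(\bg\sH_t\cap B\cap\bg C)>0$. Combined with Lemma \ref{lem:singular} (which upgrades ``$\alpha^i\lambda\ne\lambda$ for all $i\le q$'' to pairwise mutual singularity of $\lambda,\alpha\lambda,\dots,\alpha^q\lambda$), the paper approximates the singularity sets by $r$-determined sets, builds the partial transformation $\bphi_r$ realizing the jump encoded by $\bg$, and shows via Corollary \ref{cor:key} that a definite fraction of $E_0$ must be sent by $\bphi_r$ into levels $s$ and $t$ simultaneously, which is incompatible with $E_0$ being a.s.\ sent into a single level. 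The constant $q$ in the theorem is this $Q$ (the larger of $s,t$), not the period $p$ of Theorem \ref{thm:eigenvalue}, as your proposal suggests.

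Two concrete problems with your plan. First, the extraction step: in Theorem \ref{thm:eigenvalue} the decomposition $\veta_0=\sum t_i v_i$ uses finitely many extremal vectors $v_1,\dots,v_n$ (one per ``local configuration'' of $h$ near $e$), and the proof lives off compactness of the finite-dimensional simplex when passing to subsequences of $(t_{k,i})_i$. Over $\bigoplus_B L^1(X,\mu)$ there is no analogous finite extremal family --- the candidate extremal objects are indexed by $(h,x)$ --- and the $L^1$ unit ball is not weakly sequentially compact, so neither the subsequence extraction nor the identification of a limit as an honest eigenvector of a positive operator $\mathbb M$ goes through without substantial new input. You identify this obstacle and defer it to the unseen ``horospherical lemma,'' but that is a misdiagnosis: \S\ref{section:horospherical} contains no equidistribution-on-$X$ statement. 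It is entirely about $\Gamma$ and $\sH$ (existence of horofunctions with prescribed value gaps via coarse isosceles triangles, Proposition \ref{horocycle theorem}, and a pigeonhole), and it makes no reference to $X$ or $\mu$. Second, even if the spectral identity $\mathbb M^p\vec f_0=e^{e(\Gamma)p}\vec f_0$ were established, you would still need to check that the product analogue of the determinacy Proposition \ref{prop:determinism} lets you convert that identity into $\alpha^p\lambda=\lambda$; this requires tracking which \emph{group elements}, not merely which blocks, appear in $\Par^{-k}$-preimages, since the $X$-component moves by those elements. This is arguably fixable, but it is an additional argument you wave at rather than supply. The net effect is that your proposal reproduces the paper's \emph{reductions} (pass to $\sH_*$, assume $\alpha\eta=\eta$, use block densities and the mass-transport principle) but replaces the paper's actual mechanism --- an explicit partial transformation producing two incompatible level jumps --- with a spectral argument whose crucial step is left open.
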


\begin{lem}
Let $\nu$ be a $G$-quasiconformal measure on $\partial \Gamma =\partial G$. Let $G$ act on a standard Borel probability space $(X,\mu)$ by measure-preserving transformations. If the action of $G$ on $(X,\mu)$ is ergodic then the diagonal action of $G$ on $(\partial G \times X, \nu \times \mu)$ is also ergodic.
\end{lem}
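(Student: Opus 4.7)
The plan is to derive the desired ergodicity from double ergodicity of the $G$-action on $(\partial G \times \partial G, \nu\times\nu)$ by a standard Hilbert-space argument. By Theorem \ref{thm:ps}, any two quasiconformal measures on $\partial G$ of the correct dimension are equivalent, so $\nu$ is equivalent to a Patterson--Sullivan measure; Kaimanovich [Ka03] (referenced in the acknowledgments) establishes that for such a measure the $G$-action on $(\partial G \times \partial G, \nu\times\nu)$ is ergodic. Since ergodicity of an action on a product space is a measure-class invariant, this double ergodicity transfers to $\nu$ itself, and in particular yields single ergodicity of $G\curvearrowright(\partial G,\nu)$.

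Given a bounded $G$-invariant measurable $f$ on $\partial G \times X$, I would form $F\colon\partial G\to L^2(X,\mu)$ by $F(\xi)(x)=f(\xi,x)$. Because $\mu$ is $G$-invariant, the $L^2$-representation $T_g\phi=\phi\circ g^{-1}$ is unitary, and $G$-invariance of $f$ translates into equivariance $F(g\xi)=T_gF(\xi)$. The mean $c(\xi)=\int F(\xi)\,d\mu$ is then a $G$-invariant function on $\partial G$, hence a.e.\ equal to some constant $\bar c$ by single ergodicity, and subtracting off $\bar c$ gives an equivariant $F_0\colon\partial G\to L^2_0(X)$.

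Unitarity of $T_g$ makes $(\xi,\eta)\mapsto\langle F_0(\xi),F_0(\eta)\rangle$ $G$-invariant on $\partial G\times\partial G$, so by double ergodicity this inner product is a.e.\ equal to some constant $k$; likewise $\|F_0(\xi)\|^2$ is $G$-invariant on $\partial G$ and a.e.\ equal to some $k'$. Setting $\bar F_0=\int F_0\,d\nu$ and $\tilde F_0=F_0-\bar F_0$, a short calculation gives $\|\tilde F_0(\xi)\|^2=k'-k$ a.e.\ and $\langle\tilde F_0(\xi),\tilde F_0(\eta)\rangle=0$ for $(\nu\times\nu)$-a.e.\ $(\xi,\eta)$. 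Because $\nu$ is nonatomic (standard for quasiconformal measures on the boundary of a nonelementary word hyperbolic group), the diagonal is $(\nu\times\nu)$-null, so for every $\phi\in L^2(\partial G,\nu)$,
\[
\Bigl\|\int\phi(\xi)\tilde F_0(\xi)\,d\nu(\xi)\Bigr\|^2 = \int\!\!\int \phi(\xi)\overline{\phi(\eta)}\,\langle\tilde F_0(\xi),\tilde F_0(\eta)\rangle\,d\nu(\xi)\,d\nu(\eta) = 0.
\]
Varying $\phi$ forces $\tilde F_0\equiv 0$, so $F_0$ is a.e.\ equal to the constant vector $\bar F_0$. Equivariance then says $T_g\bar F_0=\bar F_0$ for every $g$, and ergodicity of $G\curvearrowright(X,\mu)$ (equivalently, absence of $G$-invariant vectors in $L^2_0(X)$) yields $\bar F_0=0$. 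Thus $f\equiv\bar c$, proving ergodicity.

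The main obstacle I anticipate is step 1: matching the quasiconformal setting of this paper with the Poisson-boundary/harmonic-measure setting in which Kaimanovich proves double ergodicity. Theorem \ref{thm:ps} reduces the problem to double ergodicity for a single specific Patterson--Sullivan measure, which must then be identified (up to equivalence) with the stationary measure of a suitable random walk on $G$; this identification is classical for word hyperbolic groups. The Hilbert-space bookkeeping in the remaining steps is routine.
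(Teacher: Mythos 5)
Your proof is correct, and it takes a genuinely different (though closely related) route from the paper. The paper gets the conclusion in one stroke by invoking Kaimanovich's theorem on \emph{ergodicity with coefficients} of the boundary action [Ka03, Theorem 3]: since $L^2(X,\mu)$ is a separable coefficient $G$-module, any $G$-equivariant map $\partial G \to L^2(X)$ is essentially constant, and one is done. Your version instead cites [Ka03] only for \emph{plain} double ergodicity of $(\partial G \times \partial G, \nu\times\nu)$ and then manually bootstraps that to the Hilbert-coefficient conclusion via the inner-product trick: the equivariance of $F_0$ into a unitary $G$-module makes $\langle F_0(\xi),F_0(\eta)\rangle$ a $G$-invariant function on $\partial G\times\partial G$, so it is a.e.\ constant, and the orthogonality/Bochner-integral calculation forces $F_0$ to be a.e.\ equal to a $G$-fixed vector, which vanishes in $L^2_0(X)$. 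Your approach therefore uses a formally weaker input (plain double ergodicity rather than double ergodicity with coefficients) at the cost of reproving by hand the Hilbert-coefficient special case of Kaimanovich's theorem; since [Ka03] already supplies exactly what the paper needs, the paper's route is more economical, but yours is more self-contained and makes the mechanism behind ``with coefficients'' transparent. In both approaches one must identify the quasiconformal measure class on $\partial G$ with the Poisson-boundary measure class (or otherwise check that the double-ergodicity statement applies to the quasiconformal setting); the paper handles this by citing [Ka00], and you correctly flag the same step. Two small points worth noting in your write-up: the nonatomicity of $\nu$, which you use to discard the diagonal when computing $\|\bar F_0\|^2$, deserves a sentence (it follows from $\nu$ being equivalent to Hausdorff measure on $\partial G$ for a nonelementary group, via Theorem \ref{thm:ps}); and the passage from $\int\phi\,\tilde F_0\,d\nu=0$ for all $\phi$ to $\tilde F_0=0$ a.e.\ relies on separability of $L^2(X)$, which should be stated since it is precisely the ``separable'' hypothesis that makes the Bochner-integral argument legitimate.
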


\begin{proof}
It suffices to show that if $F:\partial G \times X \to [0,1]$ is any $G$-invariant measurable function then $F$ is constant on a conull set. For $b \in \partial G$, let $F_b: X \to [0,1]$ be the function $F_b(x)=F(b,x)$. Because $F$ is $G$-invariant, the map $b \to F_b$ from $\partial G$ into $L^2(X)$ is $G$-equivariant where the action of $G$ on $L^2(X)$ is $(g,f) \to f \circ g^{-1}$. 

The space $L^2(X)$ is a separable coefficient $G$-module. It follows from [Ka03, theorem 3] that the map $b \to F_b$ must be constant on a conull subset of $\partial G$. Actually, that result applies to the Poisson boundary of $G$ rather than the Gromov boundary. However, it is well-known that the two boundaries coincide (see e.g., [Ka00]). Therefore, up to measure zero, $F$ depends only on its second argument, i.e., $F(b,x)=f(x)$ for some function $f:X \to [0,1]$. Because $F$ is $G$-invariant, $f$ must be $G$-invariant as well. Since the action $G \curvearrowright (X,\mu)$ is ergodic, it follows that $f$ must be constant on a conull set. Hence $F$ is constant on a conull set.
\end{proof}

\begin{remark}
An earlier version of [Ka03, theorem 3] was proven in [BM02, theorem 6] (see also [Mo01, theorem 11.1.3]).
\end{remark}



\begin{proof}[Proof of theorem \ref{thm:main}]
We may assume that $\eta$ is $\alpha$-invariant. To see this, let $\nu=\frac{1}{p} \sum_{i=0}^{p-1} \alpha^i\eta$. By corollary \ref{cor:periodic}, $\nu$ is $\alpha$-invariant. Since $\eta \times \mu$ is absolutely continuous to $\nu \times \mu$, if the theorem is true for $\nu$ then it must be true for $\eta$. Thus after replacing $\eta$ by $\nu$ if necessary, we may assume that $\eta$ is $\alpha$-invariant.

 Suppose that $$\eta \times \mu = \sum_{i=1}^k t_i \omega_i$$
where $t_i \ge 0$, $\omega_i \in M_R(\sH_0\times X)$ and the measures $\omega_1,...,\omega_k$ are pairwise mutually singular. We do not assume that $\omega_1,...,\omega_k$ are ergodic. It suffices to prove that $k$ is bounded by a universal constant. By employing theorem \ref{thm:step1}, we may assume that each $\omega_i$ is $\alpha$-invariant.

For $h\in \sH$, let $\bh \in \sH_0$ be defined by $\bh(g)=h(g)-h(e)$. For $g \in G, h \in\sH_0,$ and $x\in X$, let $\tphi(g)(h,x)=(\overline{gh}, gx)$. Let $\tpi : \sH_0 \times X \to \partial \Gamma \times X$ be the map $\tpi(h,x)=(\pi( h),x)$. $\tpi$ is $G$-equivariant in the sense that $\tpi(\tphi(g)(h,x))=g \tpi(h,x)$ where $G$ acts on $\partial \Gamma \times X$ diagonally.

By lemma \ref{lem:quasiconformal}, $\pi_*(\eta)$ is quasiconformal. By the previous lemma, $\pi_*(\eta)\times \mu$ is ergodic. Since each $\omega_i$ is $\alpha$-invariant, each $\omega_i$ is quasiinvariant under the $\tphi$-action of $G$. So $\tpi_*(\omega_i)$ is $G$-quasiinvariant. Because $\tpi_*(\omega_i)$ is absolutely continuous to $\tpi_*(\eta \times \mu) = \pi_*(\eta)\times \mu$ which is ergodic, this implies that $\tpi_*(\omega_i)$ is equivalent to $\pi_*(\eta)\times \mu$.

Since the measures $\omega_1,...,\omega_k$ are pairwise mutually singular, there exists pairwise disjoint Borel sets $E_1,...,E_k$ such that $\omega_i(E_i) = 1$ for all $i$. Because $\tpi_*(\omega_i)$ is equivalent to $\pi_*(\eta)\times\mu$, it follows that 
$$\pi_*\eta\times\mu\Big(\bigcap_{i=1}^k \tpi(E_i)\Big)=1.$$
So there exists a point $(b,x) \in \bigcap_{i=1}^k \tpi(E_i)$. Since $\tpi:\sH_0 \times X \to \partial \Gamma \times X$ is uniformly finite-to-1 (by theorem \ref{thm:CP2}) and since the sets $E_i$ are pairwise disjoint, this implies that $k$ is bounded by a constant depending only on $(G,A)$.
\end{proof}

\section{Components of $\eta \times \mu$ are virtually $\alpha$-invariant}\label{sec:part1}

A block $B \in \sB$ is called {\bf recurrent} if there is a directed cycle containing it. Here we are considering $\sB$ as a directed graph (definition \ref{defn:tsB}). Let $\sB_r\subset \sB$ denote the set of recurrent blocks. In section \ref{section:horospherical} we prove the following. 
\begin{lem}\label{lem:key}[Key Lemma]
For any $\eta \in M_R(\sH_0)$ such that $\alpha \eta =\eta$ there exist recurrent blocks $B, C \in \sB_r$, $\bg \in G$ and nonnegative integers $s\ne t$ such that
$$\eta\big(\bg\sH_{s} \cap B \cap \bg C\big)\eta\big(\bg\sH_{t} \cap B \cap \bg C\big)>0.$$
\end{lem}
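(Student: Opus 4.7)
The plan is a proof by contradiction. Suppose, toward a contradiction, that for every pair of recurrent blocks $B,C\in\sB_r$ and every $\bg\in G$ there is at most one nonnegative integer $s$ with $\eta(\bg\sH_s\cap B\cap \bg C)>0$. Call this the \emph{rigidity hypothesis}. Under this assumption, for $\eta$-a.e.\ horofunction $h$ and every $\bg$ with $\Block(h)\in\sB_r$ and $\Block(\bg^{-1}h)\in\sB_r$, the value $h(\bg)$ is a deterministic function of the triple $(\Block(h),\Block(\bg^{-1}h),\bg)$.

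First I would reduce to ergodic $\eta$, since positivity of $\eta$ on the relevant intersection would follow from positivity for some ergodic component. Next I would verify that $\eta$-a.e.\ $h$ satisfies $\Block(h)\in\sB_r$: $\alpha$-invariance combined with Theorem \ref{thm:eigenvalue} forces the block-vectors $\veta_k$ to be eigenvectors whose support cannot lie in transient blocks, since transient blocks generate only finitely long parent descents and so cannot sustain the exponential scaling $\eta(\sH_k)=\eta(\sH_0)e^{-e(\Gamma)k}$. The $\phi$-quasiinvariance of $\eta$ under $G$ (Lemma \ref{lem:precise}) then extends this pointwise: for $\eta$-a.e.\ $h$ and every $\bg\in G$ one has $\Block(\bg^{-1}h)\in\sB_r$.

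The heart of the argument is to produce, in the support of $\eta$, two horofunctions $h_1\ne h_2$ with the same block $B$ at $e$ and the same block $C$ at some common $\bg$, but distinct values $h_1(\bg)\ne h_2(\bg)$, both nonpositive. I would start from Lemma \ref{lem:determined}: if $\Block(h_1)=\Block(h_2)=B$, then $h_1$ and $h_2$ agree on the expanding backward cone $\bigcup_{n\ge 0}B(r(-n),n)$ around the initial segment of the $h_1$-gradient, but their forward gradients need only coincide up to step $H$ and may diverge thereafter. This leaves room for $h_1(\bg)\ne h_2(\bg)$ when $\bg$ lies on the forward ray past step $H$ (where $h_1(\bg)\le 0$ is automatic) or in a ``wedge'' between the two points at infinity $\pi(h_i)$. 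Proposition \ref{prop:generation} gives exponential growth $\sim e^{e(\Gamma)n}$ of candidate $\bg$ with $h_i(\bg)=-n$, while the local block $\Block(\bg^{-1}h_i)$ takes values in the finite set $\sB_r$; a double pigeonhole (applied via the mass transport principle, Proposition \ref{prop:mtp}) yields a single $\bg$ realizing $\Block(\bg^{-1}h_1)=\Block(\bg^{-1}h_2)=C$ together with $h_1(\bg)\ne h_2(\bg)$. Setting $s=-h_1(\bg)$ and $t=-h_2(\bg)$ gives two distinct nonnegative integers witnessing the desired configuration, contradicting the rigidity hypothesis.

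The main obstacle is the simultaneous pigeonhole in the last step: I must match $\Block(\bg^{-1}h_1)$ and $\Block(\bg^{-1}h_2)$ at the \emph{same} $\bg$ while preserving $h_1(\bg)\ne h_2(\bg)$ with both nonpositive. The exponential growth in Proposition \ref{prop:generation} dominates any polynomial in $|\sB_r|^2$ for large $n$, so the counts are sufficient in principle, but organizing the argument so that a single $\bg$ witnesses both the block match and the level difference is delicate. I expect the natural route is to apply Proposition \ref{prop:mtp} to the indicator function of pairs $(h,h')$ sharing the required block-configuration on the restriction of the equivalence relation $R$ to the product of block cylinders, and to track the contribution of each level separately using the eigenvector structure of $\veta_k$.
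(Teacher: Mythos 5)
Your proof takes a genuinely different route from the paper's, and the central step has a gap that I do not see how to close. You set out to find two horofunctions $h_1\neq h_2$ in $\support(\eta)$ with $\Block(h_1)=\Block(h_2)=B$, a \emph{single} group element $\bg$ with $\Block(\bg^{-1}h_1)=\Block(\bg^{-1}h_2)=C$, and $h_1(\bg)\neq h_2(\bg)$ both nonpositive. You propose to obtain $\bg$ by a pigeonhole over the exponentially many elements of a level set of $h_1$ (via Proposition \ref{prop:generation}). But the pigeonhole you describe would produce two group elements carrying the \emph{same pair} of blocks, not a single element at which the blocks of $h_1$ and $h_2$ \emph{match each other}. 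Neither Proposition \ref{prop:generation} nor the mass transport principle forces the two block-valued maps $\bg\mapsto \Block(\bg^{-1}h_1)$ and $\bg\mapsto \Block(\bg^{-1}h_2)$ to agree anywhere on the region where $h_1(\bg)\le 0$, $h_2(\bg)\le 0$ and $h_1(\bg)\neq h_2(\bg)$; once $\pi(h_1)\neq\pi(h_2)$, the two gradient geometries separate, and it is far from clear that such a $\bg$ exists for a given pair $h_1,h_2$. You flag this as ``delicate,'' but it is the missing idea, not a technicality.

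The paper's proof avoids having to coordinate a single pair $(h_1,h_2)$. It never realizes both levels $s$ and $t$ with related horofunctions. Its geometric input is Proposition \ref{horocycle theorem}: for each integer $s$, the set $\{g : \exists h\in\sH,\ h(g)=h(e)+s\}$ covers $G$ up to right-translation by a fixed finite set $R$ and a finite exception set $F(s)$, proved via coarse perpendiculars, coarse isosceles triangles and the Busemann cocycle. Combining this with quasiconformality of $\pi_*\eta$ (Lemma \ref{lem:quasiconformal}), which forces $\support(\eta)$ to project onto all of $\partial\Gamma$, and a short local correction into recurrent blocks, the paper produces finite sets $L,R$ (independent of $q$) with $L\,G_\eta(q)\,R\cup F(q)=G$ where $G_\eta(q)=\bigcup_{B,C}\{g:\eta(g\sH_q\cap B\cap gC)>0\}$. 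It then fixes \emph{one} $f\in G$ outside $\bigcup_{q=0}^{N}F(q)$ for $N>|L||R||\sB_r|^2$, records for each $q\in[0,N]$ a tuple $(l_q,r_q,B_q,C_q)$ with $l_qfr_q\in G_\eta(q,B_q,C_q)$, and pigeonholes over these finitely many configurations to get $s\neq t$ with the same tuple, hence the same $\bg=l_sfr_s$. The horofunctions witnessing positivity at levels $s$ and $t$ may be completely unrelated; this is why the covering statement, and not a divergence argument for a fixed pair, is the right tool. Your sketch has no analogue of this covering statement.

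A secondary point: the justification offered for ``$\eta$-a.e.\ $h$ has $\Block(h)\in\sB_r$'' — that transient blocks ``generate only finitely long parent descents'' — is not correct as stated, since a transient block can receive arbitrarily long incoming paths from a recurrent class. Whether the conclusion holds requires a different argument. The paper sidesteps this too: it only uses that $\Block(\Par^n(h))$ is recurrent for $n\ge C_r$ (Definition \ref{def:sBr}), and when it needs a nearby element with recurrent block it constructs one explicitly rather than invoking an almost-everywhere statement.
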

In this section, we prove theorem \ref{thm:step1} assuming the above lemma. From here on, suppose that $\eta \in M_R(\sH_0)$ is fixed. Of course, if theorem \ref{thm:step1} is true for the measure $\nu=\frac{1}{p} \sum_{i=0}^{p-1} \alpha^p \eta$ in place of $\eta$, then it must be true for $\eta$ too. By corollary \ref{cor:periodic}, $\nu$ is $\alpha$-invariant. Therefore, we may assume, after replacing $\eta$ by $\nu$ if necessary, that $\eta$ is $\alpha$-invariant.

\begin{lem}\label{lem:singular}
Suppose that there is a measure $\lambda_0 \in M_R(\sH_0\times X)$ that is absolutely continuous to $\eta \times \mu$ and a number $q >0$ such that $\alpha^i \lambda_0 \ne \lambda_0$ for any $0< i \le q$. Then there exists a measure $\lambda \in M_R(\sH_0\times X)$ that is absolutely continuous to $\eta \times \mu$ such that $\lambda, \alpha \lambda, ..., \alpha^{q}\lambda$ are pairwise mutually singular.
\end{lem}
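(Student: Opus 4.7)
The plan is to work in the $\sH_*$-picture via the natural extension of Lemma \ref{lem:isomorphic} to $M_R(\sH_0\times X)\cong M_R(\sH_*\times X)$. Writing $\lambda_0 = f_0\cdot(\eta_*\times\mu)$ for an $R$-invariant density $f_0\ge 0$, a direct computation using $\alpha$-invariance of $\eta_*\times\mu$ shows that $\alpha$ sends an absolutely continuous measure $f\cdot(\eta_*\times\mu)$ to $c_f\cdot(f\circ\beta)\cdot(\eta_*\times\mu)$ for a positive normalizing constant $c_f$. Consequently the density of $\alpha^i\lambda_0$ is proportional to $f_0\circ\beta^i$, so the hypothesis $\alpha^i\lambda_0\ne\lambda_0$ for $0<i\le q$ translates to $f_0,f_0\circ\beta,\ldots,f_0\circ\beta^q$ being pairwise non-proportional $R$-invariant $L^1$ functions.

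The goal is to construct an $R$-saturated Borel set $S\subset\sH_*\times X$ of positive $(\eta_*\times\mu)$-measure such that $\beta^k(S)\cap S=\emptyset$ for every $k=1,\ldots,q$. Granting such an $S$, set $g_0 := \chi_S\cdot(f_0\circ\beta^q)$ and $\lambda := g_0\cdot(\eta_*\times\mu)/\|g_0\|_1$. Then $g_0$ is $R$-invariant, so $\lambda$ lies in $M_R(\sH_*\times X)$ and is absolutely continuous to $\eta_*\times\mu$. The density of $\alpha^i\lambda$ is proportional to $g_0\circ\beta^i$ and is supported on $\beta^{-i}(S)$ (using that $f_0(\beta^q y)>0$ on $S$). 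The pairwise disjointness of $\beta^{-i}(S)$ for $i=0,\ldots,q$, implied by $\beta^k(S)\cap S=\emptyset$ for $1\le k\le q$, then gives the pairwise mutual singularity of $\lambda,\alpha\lambda,\ldots,\alpha^q\lambda$.

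To construct $S$ I use a two-sided windowed argmax. Define $F:\sH_*\times X\to\mathbb{R}^{2q+1}$ by $F(y):=(f_0(\beta^j y))_{j=0}^{2q}$. Since $\beta$ commutes with the $G$-action and $f_0$ is $R$-invariant, $F$ is $R$-invariant, so $S := F^{-1}(A)$ is $R$-saturated for any Borel $A$. Take
\[
A := \{v\in\mathbb{R}^{2q+1}:v_q>v_j\text{ for all }j\ne q\},
\]
the ``strict maximum at the center'' set. The symmetry of this window secures the desired disjointness: if $y\in S$ and $\beta^k y\in S$ for some $1\le k\le q$, then applying the strict-max condition at $y$ with $j=q+k\in[0,2q]$ gives $f_0(\beta^q y)>f_0(\beta^{q+k}y)$, while the same condition at $\beta^k y$ with $j=q-k\in[0,2q]$ gives $f_0(\beta^{q+k}y)>f_0(\beta^q y)$, a contradiction.

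The hard part will be verifying that $(\eta_*\times\mu)(S)>0$. This is where the pairwise non-proportionality of $f_0,f_0\circ\beta,\ldots,f_0\circ\beta^q$ must be leveraged, together with the $\alpha$-periodicity $\alpha^p=\mathrm{id}$ of Corollary \ref{cor:periodic}, which makes the sequence $\{f_0\circ\beta^j\}_{j\in\Z}$ essentially periodic of period dividing $p$ (up to a fixed scalar). The expected route is a measurable pigeonhole: one of the ``strict-max-at-index-$i$'' sets $S_i:=\{y:f_0(\beta^i y)>f_0(\beta^j y)\,\forall j\ne i,\ j\in[0,2q]\}$ must have positive $(\eta_*\times\mu)$-measure; after possibly enlarging the window from $[0,2q]$ to $[0,2N]$ for some $N\ge q$, the periodicity and non-proportionality allow a relabeling/translation that brings the positive-mass argmax index to the center. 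Completing this step is the delicate core of the argument.
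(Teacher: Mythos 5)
Your reduction to finding an $R$-saturated set $S$ of positive $(\eta_*\times\mu)$-measure with $\beta^k S\cap S=\emptyset$ for $1\le k\le q$, and the symmetric-window strict-argmax argument for the disjointness, are both sound as far as they go. But the positivity of $(\eta_*\times\mu)(S)$, which you flag as the ``delicate core,'' is a genuine gap and not a routine verification. Nothing in the hypothesis rules out ties on positive-measure sets: for instance $f_0\circ\beta\ge f_0$ a.e.\ with equality on a set of positive measure is perfectly compatible with $f_0\circ\beta$ not being proportional to $f_0$, and in such a situation the strict-max-at-center set can be empty for every window size. The proposed rescue via periodicity does not work either. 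Corollary \ref{cor:periodic} gives $\alpha^p=\mathrm{id}$ on $M_R(\sH_I)$, not on $M_R(\sH_0\times X)$, so it does not give $f_0\circ\beta^p\propto f_0$; asserting that would amount to asserting $\alpha^p\lambda_0=\lambda_0$, which is essentially the conclusion Theorem \ref{thm:step1} is working toward, so it cannot be used at this stage.

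The paper avoids all of these pointwise difficulties by arguing one level up, on the space of ergodic components. Using the Krein--Milman theorem it writes $\eta\times\mu=\int\omega\,d\nu(\omega)$ over $M^e_R(\sH_0\times X)$, observes that the hypothesis forces the set $Y_0$ of ergodic $\omega$ with $\alpha^i\omega\ne\omega$ for all $0<i\le q$ to have positive $\nu$-measure, and then inductively extracts $Y_q\subset Y_0$ with $\nu(Y_q)>0$ such that $Y_q,\alpha Y_q,\ldots,\alpha^q Y_q$ are pairwise disjoint. The inductive step is a standard exhaustion argument: if no positive-$\nu$-measure $Y_{i+1}\subset Y_i$ with the required disjointness existed, then $\alpha^{i+1}$ would act as the identity on $Y_i$, contradicting $Y_i\cap\bar Y=\emptyset$ where $\bar Y$ is the set of ergodic measures that are $\alpha^i$-fixed for some $0<i\le q$. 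Setting $\lambda=\nu(Y_q)^{-1}\int_{Y_q}\omega\,d\nu(\omega)$, the disjointness of the $\alpha^j Y_q$ converts directly into mutual singularity of $\lambda,\alpha\lambda,\ldots,\alpha^q\lambda$. In short, passing to the decomposition measure $\nu$ makes $\alpha$ into an honest transformation of a standard probability space, where one can peel off a wandering set by the usual ``split off a non-return piece or else the map is the identity'' dichotomy; at the level of Radon--Nikodym densities ties block the analogous pointwise argument, and it is not clear how to repair your approach without effectively re-deriving the ergodic-decomposition picture.
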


\begin{proof}
By the Krein-Milman theorem, there exists a probability measure $\nu$ on $M_R^e(\sH_0\times X)$, the space of ergodic measures in $M_R(\sH_0\times X)$, such that
$$\eta\times \mu = \int \omega \, d\nu(\omega).$$

Let ${\bar Y} \subset M^e_R(\sH_0\times X)$ be the set of $\omega \in M^e_R(\sH_0\times X)$ such that $\alpha^i \omega = \omega$ for some $i$ with $0< i \le q$. The hypothesis implies $\nu({\bar Y}) < 1$. Let $Y_0 = M_R(\sH_0 \times X) - {\bar Y}$.

Suppose that for some $i$ with $q > i \ge 0$, a set $Y_i$ has been defined so that $Y_i \subset M^e_R(\sH_0\times X) - {\bar Y}$, $\nu(Y_i)>0$ and $Y_i, \alpha(Y_1),...,\alpha^i(Y_i)$ are pairwise disjoint. We claim that $Y_i$ contains a nonnegligible subset $Y_{i+1} \subset Y_i$ such that $Y_{i+1}, \alpha(Y_{i+1}),...,\alpha^{i+1}(Y_{i+1})$ are pairwise disjoint. If this were not true, then it follows that for every measurable $Z \subset Y_i$ with $\nu(Z)>0$, $Z \cap \alpha^{i+1}Z \ne \emptyset$. Applying this to $Z-\alpha^{i+1}(Z)$ we see that $\nu(Z \Delta \alpha^{i+1}Z)=0$ for every $Z \subset Y_i$ with $\nu(Z)>0$. But this implies that $\alpha^{i+1}$ is the identity map on $Y_i$, contradicting that $i < q$ and $Y_i \cap {\bar Y} =\emptyset$. 

Thus there exists a set $Y_q \subset M_R^e(\sH_0\times X)$ so that the sets $Y_q,\alpha(Y_q),...,\alpha^q(Y_q)$ are pairwise disjoint and $\nu(Y_q)>0$. Let
$$\lambda = \frac{1}{\nu(Y_q)} \int_{Y_q} \omega \, d\nu(\omega).$$
Then $\lambda$ satisfies the conclusions.  
\end{proof}

Let $Q$ be the maximum value of $s$ or $t$ that occurs in the key lemma. Suppose, for a contradiction, that there exists a measure $\lambda \in M_R(\sH_0\times X)$ such that $\lambda$ is absolutely continuous to $\eta \times \mu$ and $\alpha^i\lambda \ne \lambda$ for any $1\le i \le Q$. By lemma \ref{lem:singular}, we can assume that $\lambda, \alpha\lambda,...,\alpha^Q\lambda$ are mutually singular. This implies that there exists pairwise disjoint sets $E_i \subset \sH_0\times X$ such that $(\alpha^i\lambda) (E_j) =\delta^i_j$ where $\delta^i_j$ is the Dirac $\delta$-symbol. It follows that for $(\eta \times \mu)$-a.e. $(h,x) \in E_0$, if $g \in G$ and $gh \in \sH_i$ for some $i$ with $0 \le i \le Q$ then $(\beta^{-i} \times 1_X)(gh,gx) \in E_i$ where $\beta(h)=h-1$ is as defined in \S \ref{sec:periodicity}.

It will be necessary to approximate each $E_i$ by a ``finitely determined'' set. This is explained next.

\begin{defn}
For $h \in \sH_0$ and $r\ge 0$, let
$$[h]_r = \big\{h' \in \sH_0~\big|~ \Block(\Par^n(h'))=\Block(\Par^n(h)) \, \forall 0\le n \le r \big\}$$
be the {\bf cylinder set around $h$ of order $r$}.
\end{defn}

\begin{defn}
For $r>0$, a Borel set $E \subset \sH_0$ is {\bf $r$-determined} if for all $h\in E$ and for all $h' \in \sH_0$ such that 
$$\Block(\Par^n(h))=\Block(\Par^n(h'))$$
for all $ 0\le n \le r$, $h'\in E$. Equivalently, $E$ is $r$-determined if it is a union of cylinder sets of order $r$. We will say that a Borel set $E \subset \sH_0 \times X$ is {\bf $r$-determined} if it is a union of sets of the form $E' \times Y$ where $E' \subset \sH_0$ is $r$-determined and $Y \subset X$.
\end{defn}

\begin{lem}\label{lem:approx}
Let $\lambda \in M_R(\sH_0\times X)$. If $E$ is any Borel set in $\sH_0\times X$ and $\epsilon>0$ then there exists a set $E' \subset \sH_0\times X$ that is $r$-determined (for some $r$) such that $\lambda(E'\Delta E)<\epsilon$. 
\end{lem}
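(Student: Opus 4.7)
The plan is to view the family of $r$-determined sets (as $r$ varies) as a Boolean algebra generating the product Borel $\sigma$-algebra on $\sH_0\times X$, and then to invoke the classical approximation theorem for finite measures on a generating algebra. No step in the argument requires real work beyond this observation.

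By the symbolic coding of Theorem \ref{thm:CPmain}, the map $P:\sH_0\to\sP$ is a homeomorphism, so each cylinder $[h]_r$ is clopen in $\sH_0$ and, as $r\to\infty$, the cylinders $[h]_r$ shrink to singletons and hence form a neighborhood basis of $\sH_0$. Since $\sB$ is finite there are only finitely many $r$-cylinders for each fixed $r$, so the $r$-determined subsets of $\sH_0$ form a finite algebra, and these finite algebras refine as $r$ grows. The $r$-determined subsets of $\sH_0\times X$ form a sub-$\sigma$-algebra for each fixed $r$: every such set may be written uniquely as $\bigcup_C C\times Y_C$, with $C$ ranging over the finitely many order-$r$ cylinders and each $Y_C\subseteq X$ Borel, and closure under complements and countable unions is immediate from this form. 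Let $\sA$ denote the union over $r$ of these sub-$\sigma$-algebras; then $\sA$ is a Boolean algebra, and since the rectangles $C\times Y$ with $C$ a cylinder generate the product topology on $\sH_0\times X$, $\sA$ generates the full product Borel $\sigma$-algebra.

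The conclusion then follows from the standard fact that a finite Borel measure $\lambda$ on a measurable space $(Y,\sF)$ can be approximated in $\lambda$-measure, to arbitrary accuracy, by elements of any Boolean algebra generating $\sF$ (see e.g.\ Halmos, \emph{Measure Theory}, \S13 Theorem E). Applied to our $\lambda$ and $\sA$, this produces, for any Borel $E\subseteq\sH_0\times X$ and any $\epsilon>0$, a set $E'\in\sA$ with $\lambda(E\Delta E')<\epsilon$; by construction such an $E'$ is $r$-determined for some $r$, as required. The only conceptual ingredient is thus the clopen basis of cylinder sets indexed by $r$ provided by the symbolic coding of \S\ref{symbolic coding}; once that is in hand, the lemma is a routine application of standard measure theory.
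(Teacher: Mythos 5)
Your proof is correct, and it reaches the same destination as the paper's by a slightly different route. The paper gives $X$ a compatible compact topology, invokes regularity of Borel measures on the compact metrizable space $\sH_0\times X$ to approximate $E$ from outside by an open set $O$, and then uses (implicitly) that $O$ is a countable union of rectangles $[h]_r\times U$ with $U\subset X$ open, from which a finite subunion gives the desired $r$-determined approximant. You instead observe that the $r$-determined subsets of $\sH_0\times X$ form, for each fixed $r$, a sub-$\sigma$-algebra of the form $\{\bigcup_{C}C\times Y_C:\ Y_C\subset X\ \text{Borel}\}$, that these refine as $r$ grows, and that their union is a Boolean algebra generating the product Borel $\sigma$-algebra; the standard approximation theorem for a finite measure over a generating algebra then finishes the argument. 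Both arguments hinge on the same structural fact supplied by Theorem \ref{thm:CPmain}, namely that the order-$r$ cylinders form a countable clopen basis of $\sH_0$. Your version has the small advantage of not needing to equip $X$ with a compact topology and of making the finite-union step explicit rather than implicit; the paper's version is marginally shorter because it offloads work to topological regularity. Either is acceptable, and no gap is present in your argument.
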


\begin{proof}
Give $X$ a compact topology compatible with its Borel structure. Since $\lambda$ is a Borel measure, it is regular. So for every $\epsilon>0$ there exists an open set $O \subset \sH_0 \times X$ such that $E \subset O$ and $\lambda(O-E)<\epsilon$.

By theorem \ref{thm:CPmain}, the map which associates to $h \in \sH_0$ the sequence $n \to \Block(\Par^n(h))$ is a homeomorphism onto a subshift of finite type over $\N$. Therefore every open set of $\sH_0$ is a union of cylinder sets. Thus open subsets of $\sH_0\times X$ are unions of sets of the form $[h]_r \times U$ where $U$ is open in $X$. This implies the lemma.
\end{proof}

Later we will use the above lemma to approximate each set $E_i$ with an $r$-determined set $E'_i$. Now let $B,C \in \sB_r$, $\bg \in G$ and $s,t \in \Z$ be as in the key lemma. For each $r>0$ we will define a partial transformation $\psi_r:$ Dom$(\psi_r) \to $Im$(\psi_r)$ where Dom$(\psi_r) \subset \Par^{-r}(B \cap \bg C)$ and Im$(\psi_r) \subset \Par^{-r}(\bg^{-1} B \cap  C)$. Roughly speaking, the idea of the proof is to obtain a contradiction by studying the sets $E'_i \cap \bphi^{-1}_r(E'_j)$ and their $(\eta \times \mu)$-values where $\bphi_r$ is a natural extension of $\psi_r$ to Dom$(\psi_r) \times X$.

\begin{defn}\label{defn:psi}
First define
$$K_r:=\big\{g \in G~\big|~ gh=Par^r(h) \textrm{ for some } h \in \Par^{-r}(B \cap \bg C) \big\}.$$
$$L_r:=\big\{g \in G~\big|~ gh=Par^r(h) \textrm{ for some } h \in  \Par^{-r}(\bg^{-1} B \cap  C) \big\}.$$
The key lemma implies $B \cap \bg C$ is nonempty. Thus $\bg^{-1}(B \cap \bg C)=\bg^{-1} B \cap  C$ is nonempty, too. Since $B$ and $C$ are recurrent, $\Par^{-r}(B\cap \bg C)$ and $\Par^{-r}(\bg^{-1}B \cap C)$ are nonempty. So $K_r$ and $L_r$ are nonempty.

Let $f_r:$ dom$(f_r) \to $ rng$(f_r)$ be a bijection with dom$(f_r) \subset K_r$ and rng$(f_r) \subset L_r$. Let 
$$\sN^r=\big\{ h \in \Par^{-r}(B \cap \bg C) \cap \sH_0~|~ gh=\Par^r(h) \textrm{ for some } g \in dom(f_r)\big\}.$$
Define $\psi_r: \sN^r \to \sH$ by $\psi_r(h)= f_r(  g_h )^{-1}\bg^{-1}\Par^r(h)$ where $g_h$ is such that $g_hh = \Par^r(h)$. The only properties of $\psi_r$ that we will use are contained in the next lemma.
\end{defn}
\begin{lem}
\begin{enumerate}
\item $\Par^r(\psi_r(h)) = \bg^{-1}\Par^r(h) \in \bg^{-1} B \cap C$.
\item $\psi_r$ is injective. Thus it is a partial transformation of $\sH$.
\item If $h \in \sN^r \cap \sH_0$, then $\psi_r(h) \in \sH_i$ for some $i$ with $|i| \le d(\bg ,e)$. 
\end{enumerate}
\end{lem}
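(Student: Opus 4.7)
The plan is to verify the three properties in order, observing that (2) and (3) follow almost formally from (1), so the main content lies in (1). For (1), the containment $\bg^{-1}\Par^r(h) \in \bg^{-1}B \cap C$ is immediate: since $h \in \sN^r \subset \Par^{-r}(B \cap \bg C)$, we have $\Par^r(h) \in B \cap \bg C$, and left-multiplication by $\bg^{-1}$ gives $\bg^{-1}\Par^r(h) \in \bg^{-1}B \cap C$. For the harder equality $\Par^r(\psi_r(h)) = \bg^{-1}\Par^r(h)$, I would use the transformation formula for iterated parents under the $G$-action: $\Par_{ak}^r(e) = a\Par_k^r(a^{-1})$, hence $\Par^r(ak) = \Par_k^r(a^{-1})^{-1} k$. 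Applied with $a = f_r(g_h)^{-1}$ and $k = \bg^{-1}\Par^r(h)$, the desired equality reduces to showing $\Par_{\bg^{-1}\Par^r(h)}^r(f_r(g_h)) = e$. By the definition of $L_r$, there exists a witness $h^* \in \Par^{-r}(\bg^{-1}B \cap C)$ with $\Par_{h^*}^r(e) = f_r(g_h)^{-1}$; shifting by $f_r(g_h)$ gives $\Par_{\Par^r(h^*)}^r(f_r(g_h)) = e$. Since $\Par^r(h^*)$ and $\bg^{-1}\Par^r(h)$ both lie in the block $C$, iterated applications of Lemma \ref{lem:determined} (together with the fact from Definition \ref{defn:tsB} that the directed graph $\tsB$ uniquely determines the forward block sequence) show their gradient trees match in a neighborhood large enough to transport this $r$-step identity.

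For (2), suppose $\psi_r(h_1) = \psi_r(h_2)$. Applying $\Par^r$ and invoking (1) gives $\bg^{-1}\Par^r(h_1) = \bg^{-1}\Par^r(h_2)$, hence $\Par^r(h_1) = \Par^r(h_2)$. Comparing the leading group factors in the defining formula $\psi_r(h) = f_r(g_h)^{-1}\bg^{-1}\Par^r(h)$ then forces $f_r(g_{h_1}) = f_r(g_{h_2})$, so $g_{h_1} = g_{h_2}$ by injectivity of $f_r$, and finally $h_1 = g_{h_1}^{-1}\Par^r(h_1) = g_{h_2}^{-1}\Par^r(h_2) = h_2$. The graph of $\psi_r$ is contained in $R$ because $\psi_r(h) = (f_r(g_h)^{-1}\bg^{-1}g_h) \cdot h$ is obtained from $h$ by the $G$-action.

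For (3), part (1) gives $\Par^r(\psi_r(h))(e) = \bg^{-1}\Par^r(h)(e) = \Par^r(h)(\bg)$. On the other hand, for any horofunction $k$ one has $\Par^r(k)(e) = k(e) - r$, so $\Par^r(\psi_r(h))(e) = \psi_r(h)(e) - r$. Combining, $\psi_r(h)(e) = r + \Par^r(h)(\bg)$. Since $h \in \sH_0$ forces $\Par^r(h)(e) = -r$ and horofunctions are $1$-Lipschitz (a direct consequence of the distance-like property), one gets $|\Par^r(h)(\bg) + r| \le d(\bg, e)$, giving $|\psi_r(h)(e)| \le d(\bg, e)$ as required.

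The main obstacle is the key claim in (1), namely $\Par_{\bg^{-1}\Par^r(h)}^r(f_r(g_h)) = e$. The definition of $L_r$ only supplies some witness $h^*$, and the conclusion one needs is about the specific horofunction $\bg^{-1}\Par^r(h)$. Making this rigorous requires carefully invoking Lemma \ref{lem:determined} iteratively, using the forward-determined block sequence supplied by $\tsB$ and the choice $H > W + 16\delta$, to transfer the $r$-fold gradient structure from $\Par^r(h^*)$ across to the other element of the same block $C$.
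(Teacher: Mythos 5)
Your proposal follows the same route as the paper's proof: parts (2) and (3) are argued identically, and for part (1) you reduce, exactly as the paper does, to the identity $\Par^r_{\bg^{-1}\Par^r(h)}(f_r(g_h))=e$, produce the witness $h^*$ from the definition of $L_r$ to establish $\Par^r_{\Par^r(h^*)}(f_r(g_h))=e$, and then invoke Lemma \ref{lem:determined} to transfer the $r$-fold parent relation across the common block $C$. Your explicit use of the equivariance identity $\Par^r_{ak}(ag)=a\Par^r_k(g)$ makes the reduction cleaner than the paper's terse presentation, and you correctly flag the Lemma \ref{lem:determined} transfer as the only step that carries real weight, which matches where the paper leans on that lemma.
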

\begin{proof}
{\bf 1.} First note that $\forall h \in g_0^{-1}B \cap C, ~ \forall g \in L_r$, $\Par^r(g^{-1}h)=h$. To see this, note that by definition of $L_r$, there is some $h_0 \in \Par^{-r}(g_0^{-1}B \cap C)$ such that $gh_0=\Par^r(h_0)$. Thus, if $h=\Par^r(h_0)$ then $g^{-1}h=h_0$ so $\Par^r(g^{-1}h)=\Par^r(h_0)=h$. The general case now follows from lemma \ref{lem:determined}. 

Now if $h\in \sN^r$ then $g_0^{-1}\Par^r(h)\in g_0^{-1}B \cap C$ and $f_r(g_h^{-1}) \in L_r$. So $\Par^r(f_r(g_h)^{-1}g_0^{-1}\Par^r(h))=g_0^{-1}\Par^r(h)$.

{\bf 2.} Suppose for some $h_1,h_2 \in \sN^r$ that $\psi_r(h_1)=\psi_r(h_2)$. Then $\Par^r(\psi_r(h_1))=\Par^r(\psi_r(h_2))$. By part (1) above this implies $\Par^r(h_1)=\Par^r(h_2)$. But for $i=1,2$, $\psi_r(h_i)=f_r(g_{h_i})^{-1}g_0^{-1}\Par^r(h_i)$. So this implies that $f_r(g_{h_1})=f_r(g_{h_2})$. But $f_r$ is a bijection. So $g_{h_1}=g_{h_2}$. By definition, $h_1=g_{h_1}^{-1}\Par^r(h_1)=g_{h_2}^{-1}\Par^r(h_2)=h_2$.

{\bf 3.} To see this, observe that $\Par^r(h) \in \sH_r$. The distance-like property of horofunctions implies $\bg^{-1}\Par^r(h) \in \sH_{r+i}$ for some $i$ with $|i| \le d(\bg,e)$. Since $\Par^r(\psi_r(h) ) = \bg^{-1}\Par^r(h)$, it follows that $\psi_r(h) \in \sH_i$.
\end{proof}

\begin{defn}
Let $\sN^r_j \subset \sN^r$ be the set of those $h \in \sN^r$ such that $\psi_r(h) \in \sH_j$. The collection $\{\sN^r_j\}_{j\in\Z}$ partitions $\sN^r$. 
\end{defn}

The only part of the proof of theorem \ref{thm:step1} in which we use the key lemma is in the next corollary.

\begin{cor}\label{cor:key}
There exists a $c>0$ such that the following holds. Let $E \subset \sH_0\times X$ be $r$-determined for some $r>0$. If $\eta \times \mu\big(E \cap (\sN^r \times X)\big)>0$ then 
$$\frac{\eta \times \mu\big(E \cap (\sN^r_s \times X)\big)}{\eta\times \mu\big(E \cap (\sN^r \times X)\big)} >c~\textrm{ and }~
\frac{\eta \times \mu\big(E \cap (\sN^r_t\times X)\big)}{\eta\times \mu\big(E \cap (\sN^r\times X)\big)} >c$$
where $s\ne t$ are the integers in lemma \ref{lem:key}. Note $0 \le s,t \le Q$ by definition of $Q$.
\end{cor}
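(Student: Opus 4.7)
To prove Corollary~\ref{cor:key}, my plan is to reduce the stated ratio atom-by-atom in the $r$-determined decomposition of $E$, and match each piece against the positivity guaranteed by the key lemma via the $R$-invariance of $\eta$. Write $E\cap(\sN^r\times X) = \bigsqcup_\alpha ([h_\alpha]_r\cap\sN^r)\times X_\alpha$ for cylinder sets $[h_\alpha]_r$ of order $r$ meeting $\sN^r$ and Borel $X_\alpha\subset X$. By Lemma~\ref{lem:determined}, on any such cylinder the element $k_\alpha:=g_{h_\alpha}\in\operatorname{dom}(f_r)$, the terminal block $B=\Block(\Par^r(h_\alpha))$, and the condition $\Par^r(h)\in\bg C$ are constant, so in fact $[h_\alpha]_r\subset\sN^r$. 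Since $\Par^r(h)=k_\alpha h$ identically on $[h_\alpha]_r$, the subpartition by $\sN^r_j$ becomes
\begin{equation*}
[h_\alpha]_r\cap\sN^r_j \;=\; [h_\alpha]_r\cap p_\alpha\sH_j,\qquad p_\alpha := k_\alpha^{-1}\bg f_r(k_\alpha)\in G.
\end{equation*}
Since the $X_\alpha$-weights are shared between numerator and denominator, the stated ratio is a convex combination of the cylinder-conditional ratios $\eta([h_\alpha]_r\cap p_\alpha\sH_j)/\eta([h_\alpha]_r)$, so it suffices to bound each of them below uniformly in $\alpha,r$ for $j\in\{s,t\}$.

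For the uniform lower bound, I would transport each cylinder-conditional ratio back to the form supplied by the key lemma. First, replace $\eta$ by its $\alpha$-invariant average $\tfrac1p\sum_{i=0}^{p-1}\alpha^i\eta$ (Corollary~\ref{cor:periodic}); this leaves the hypotheses of the key lemma in force. Next, the $G$-element $\tau_\alpha:=f_r(k_\alpha)^{-1}\bg^{-1}k_\alpha$ is precisely the one implementing $\psi_r$ on $[h_\alpha]_r$, so it maps the cylinder into $\Par^{-r}(\bg^{-1}B\cap C)$. Working in a sufficiently large extension $\omega\in M_R(\sH_I)$ of $\eta$ (via Lemma~\ref{restriction}), the $R$-invariance of $\omega$ yields
\begin{equation*}
\omega([h_\alpha]_r\cap p_\alpha\sH_j)\;=\;\omega\bigl(\tau_\alpha([h_\alpha]_r\cap p_\alpha\sH_j)\bigr),
\end{equation*}
and a further $G$-translation by $\bg$ sends the resulting image into $B\cap\bg C\subset\sH_0$ while carrying $p_\alpha\sH_j$ onto $\bg\sH_j$. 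Controlling the Radon--Nikodym factor between the various $\sH_j$-fibers using Lemma~\ref{lem:precise} (the factor is a power of $e^{\pm e(\Gamma)}$ with exponent bounded by $|j|\le Q$, hence by $(G,A)$ alone), one concludes that the cylinder-conditional ratio is bounded below by a universal positive multiple of
\begin{equation*}
\frac{\eta(B\cap\bg C\cap\bg\sH_j)}{\eta(B\cap\bg C)},
\end{equation*}
which is strictly positive for $j\in\{s,t\}$ by Lemma~\ref{lem:key}. Taking $c$ to be the minimum of the two positive numbers finishes the proof.

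The main obstacle is to verify that the composite $R$-partial transformation above genuinely sends $[h_\alpha]_r\cap p_\alpha\sH_j$ into a subset of $B\cap\bg C\cap\bg\sH_j$ with uniformly bounded Radon--Nikodym derivative, independent of $r$ and $\alpha$. Block structure is preserved by the relevant $G$-translations thanks to Lemma~\ref{lem:determined} (which guarantees the parent chain and its block decoration are intertwined by the $G$-action in the way one needs), and the $\alpha$-invariance via Lemma~\ref{lem:precise} fixes the change-of-measure to a constant determined only by the two levels involved. Since those levels satisfy $|j|\le Q$ and the finite list of $(B,C,\bg,s,t)$ comes from the key lemma, the overall constant $c$ depends only on $(G,A)$.
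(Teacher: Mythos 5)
Your reduction to a single cylinder $[h_\alpha]_r$ is fine, and the observation that $g_h$ (and hence $k_\alpha$ and the implementing element $\tau_\alpha$) is constant on a cylinder of order $r$ is correct. But the assertion that ``the condition $\Par^r(h)\in\bg C$ [is] constant'' on $[h_\alpha]_r$, and hence that $[h_\alpha]_r\subset\sN^r$, is false, and the rest of the argument depends on it. Membership in $\sN^r$ requires $\Par^r(h)\in B\cap\bg C$. While $\Block(\Par^r(h))=B$ is indeed constant on the cylinder, the second condition $\Par^r(h)\in\bg C$ is $\Block\big(\bg^{-1}\Par^r(h)\big)=C$, and this depends on the values of $\Par^r(h)$ in a $W$-neighborhood of the $\Par$-chain starting from $\bg$, which lies at horofunction levels strictly below $-r$ (since $\Par^r(h)(\bg)\le -r-\min(s,t)$ when $h\in\sN^r_s\cup\sN^r_t$). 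Those values are not pinned down by an $r$-cylinder, and Lemma~\ref{lem:determined} does not give them: that lemma propagates block data along the parent chain through $e$ and in the backward horoball of the gradient line through $e$, not in a bounded neighborhood of an arbitrary element $\bg$. Consequently $[h_\alpha]_r\cap\sN^r_j$ is $[h_\alpha]_r\cap\sN^r\cap p_\alpha\sH_j$, not $[h_\alpha]_r\cap p_\alpha\sH_j$; this breaks both your convex-combination decomposition of the ratio and the subsequent transport step.

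The paper never claims $F\subset\sN^r$. Instead it observes that $\Par^r$ is injective on the cylinder $F=[h]_r$ (by Theorem~\ref{thm:CPmain}) and carries $F$ onto $\sH_r\cap B$, carries $F\cap\sN^r$ onto $\sH_r\cap B\cap\bg C$, and carries $F\cap\sN^r_j$ onto $\sH_r\cap B\cap\bg C\cap\bg\sH_{r+j}$. Because $\Par^r$ is a partial transformation of the relation, $\Res^{-1}\eta$-invariance transfers the conditional ratio to a ratio of $\Res^{-1}\eta$-measures of sets in $\sH_r$, and then $\alpha^r\eta=\eta$ yields
\begin{equation*}
\frac{\eta(F\cap\sN^r_s)}{\eta(F\cap\sN^r)}=\frac{\eta(B\cap\bg C\cap\bg\sH_s)}{\eta(B\cap\bg C)},
\end{equation*}
to which the key lemma applies directly. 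In particular no Radon--Nikodym bound from Lemma~\ref{lem:precise} is needed, and one avoids the unclear step of composing $\tau_\alpha$ with a left translation by $\bg$ (note $\Par^r$ does not commute with left translation, so $\bg\,\psi_r(h)$ does not land in $B\cap\bg C$ as claimed). If you want to salvage your approach, replace $[h_\alpha]_r$ by $[h_\alpha]_r\cap\sN^r$ throughout and use $\Par^r$ rather than $\tau_\alpha$ as the transporting partial transformation; that essentially reproduces the paper's proof.
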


\begin{proof}
Observe that by integrating over $x \in X$, it suffices to prove that if $F \subset \sH_0$ is $r$-determined then 
$$\frac{\eta (F \cap \sN^r_s )}{\eta(F \cap \sN^r )} >c\textrm{
and }
\frac{\eta (F \cap \sN^r_t )}{\eta(F \cap \sN^r )} >c.$$

Because $F$ is $r$-determined, it is a (finite) disjoint union of cylinder sets of order $r$. Thus we may assume that $F=[h]_r$ for some $h \in \sH_0$.

Since $[h]_r \cap \sN^r \ne \emptyset$, $\Par^r(F)= \sH_r \cap B$. Also, $\Par^r(F \cap \sN^r)=\sH_r \cap B \cap \bg C$. If $h \in \sN^r_j$ for some $j \in \Z$ then $\Par^r(h) \in \bg\sH_{r+j}$. This is because $\bg^{-1}\Par^r(h) = \Par^r(\psi_r(h)) \in \sH_{j+r}$. So $\Par^r(F \cap \sN^r_s) = \sH_r \cap B \cap \bg C \cap \bg\sH_{s+r}$. Since $F$ is a cylinder set, the map $\Par^r: F \to \sH_r \cap B$ is injective.

Recall that from lemma \ref{lem:isomorphic} that $M_R(\sH_*)$ and $M_R(\sH_0)$ are canonically isomorphic under a map $\Res^{-1}: M_R(\sH_0) \to M_R(\sH_*)$ which is the inverse to the normalized restriction map. So,
\begin{eqnarray*}
\frac{\eta (F \cap \sN^r_s )}{\eta(F \cap \sN^r )} &=& \frac{\Res^{-1} \eta (F \cap \sN^r_s )}{\Res^{-1} \eta(F \cap \sN^r )}\\
 &=& \frac{\Res^{-1} \eta \big(\Par^r(F \cap \sN^r_s) \big)}{\Res^{-1} \eta\big(\Par^r(F \cap \sN^r) \big)}\\
&=& \frac{\Res^{-1} \eta(\sH_r \cap B \cap \bg C \cap \bg\sH_{s+r})}{\Res^{-1} \eta(\sH_r \cap B \cap \bg C)}\\
&=& \frac{\alpha^r\Res^{-1} \eta(\sH_0 \cap B \cap \bg C \cap \bg\sH_s)}{\alpha^r \Res^{-1} \eta(\sH_0 \cap B \cap \bg C)}\\
&=& \frac{\alpha^r\eta( B \cap \bg C \cap \bg\sH_s)}{\alpha^r  \eta( B \cap \bg C)}.
\end{eqnarray*}
A similar statement holds for $t$ in place of $s$. The result now follow from lemma \ref{lem:key} applied to $\alpha^r \eta=\eta$.
\end{proof}

\begin{defn}
For $h \in \sH$ let $\bh \in \sH_0$ be the horofunction $\bh(g)=h(g)-h(e)$. For $(h,x) \in \sH \times X$, let $\overline{(h,x)} = (\bh, x)$. 
\end{defn} 

\begin{defn}
Define $\bphi_r: \sN^r \times X \to \sH_0 \times X$ as follows. For $h \in \sN^r$, let $k_h \in G$ be such that $\psi_r(h)=k_hh$. Then define $\bphi_r(h,x)=(\overline{k_hh}, k_hx)$. 
\end{defn}

\begin{lem}\label{lem:bphi}
There exists a constant $C_0>0$ (that does not depend on $r$) such that for all $(h,x) \in \sN^r \times X$,
$$C_0^{-1} \le \frac{d\bphi_{r*}(\eta\times \mu)}{d(\eta\times \mu)}(h,x) \le C_0.$$
\end{lem}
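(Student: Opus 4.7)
The plan is to exploit the fact that $\bphi_r$ is locally (on level sets of $h\mapsto k_h$) just the combination of the $\phi$-action of a fixed element $g \in G$ on the first coordinate and the ordinary $G$-action on the second coordinate. Since $\mu$ is $G$-invariant, the $X$-factor contributes trivially, and the $\sH_0$-factor is controlled by Lemma \ref{lem:precise}. Recall we may assume $\eta$ is $\alpha$-invariant (as done at the start of the section), so that Lemma \ref{lem:precise} and its extension to $M_R(\sH_0)$ apply.

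First, I would partition $\sN^r = \bigsqcup_{g \in G} A_g$ with $A_g = \{h \in \sN^r : k_h = g\}$. These are Borel, and by part~2 of the lemma following Definition \ref{defn:psi}, $\psi_r$ is injective, so $\bphi_r$ is a countable disjoint union of partial bijections, each acting on $A_g \times X$ by $(h,x) \mapsto (\phi(g)h,\, gx)$. Since $\mu$ is $G$-invariant, pushforward by the $g$-action is identity on the $X$-factor, and the Radon-Nikodym derivative on the image reduces to the first-coordinate derivative $d\phi(g)_*\eta / d\eta$ evaluated at $\phi(g)h$.

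Next, I would invoke Lemma \ref{lem:precise} (in the $\alpha$-invariant $M_R(\sH_0)$ version established just after its proof) to write
\[
\frac{d\phi(g)_*\eta}{d\eta}(h') \;=\; e^{e(\Gamma)(h'(e) - h'(g))} \;=\; e^{-e(\Gamma)\, h'(g)}
\]
for $h' \in \sH_0$. Setting $h' = \phi(k_h)h$ and unwinding Definition \ref{def:phi},
\[
h'(k_h) \;=\; h(e) - h(k_h^{-1}) + h(e) \;=\; -h(k_h^{-1}),
\]
using $h(e) = 0$. But $h(k_h^{-1}) = (k_h h)(e) = \psi_r(h)(e)$, and by part~3 of the lemma following Definition \ref{defn:psi}, $\psi_r(h) \in \sH_i$ for some $|i| \le d(\bg,e)$. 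Hence $|h(k_h^{-1})| \le d(\bg,e)$, and the Radon-Nikodym derivative lies in the interval $[e^{-e(\Gamma)d(\bg,e)},\, e^{e(\Gamma)d(\bg,e)}]$. Setting $C_0 = e^{e(\Gamma)d(\bg,e)}$ yields the desired bound, which crucially depends only on $\bg$ and not on $r$.

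The only mildly delicate point, which is really bookkeeping rather than a substantive obstacle, is that the Radon-Nikodym derivative $d\bphi_{r*}(\eta\times\mu)/d(\eta\times\mu)$ is naturally defined on the image of $\bphi_r$, whereas the statement evaluates at $(h,x)$ in the domain; since $\bphi_r$ is a partial bijection, the bound transports between domain and image via pullback, so the same uniform constant $C_0$ controls both directions. The only real content is the cancellation in $h'(k_h) = -h(k_h^{-1})$ and the uniform bound $|h(k_h^{-1})| \le d(\bg,e)$ coming from the third property of $\psi_r$.
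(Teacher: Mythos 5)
Your proof is correct and follows essentially the same approach as the paper's: write $\bphi_r(h,x) = (\phi(k_h)h, k_hx)$, note that $G$-invariance of $\mu$ trivializes the $X$-factor, bound $|h(k_h^{-1})|$ by $d(\bg,e)$ using item (3) of the lemma following Definition~\ref{defn:psi}, and apply Lemma~\ref{lem:precise}. You additionally make explicit the small computation $(\phi(k_h)h)(k_h) = -h(k_h^{-1})$, which the paper leaves implicit in the phrase ``Lemma~\ref{lem:precise} now implies the claim.''
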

\begin{proof}
Note that $\bphi_r(h,x) = (\phi(k_h)h, k_h x)$ where $\phi$ is as in definition \ref{def:phi} and $k_h$ is as in the previous definition. Now $h(e)=0$ and $h(k_h^{-1}) = -i $ iff $k_hh = \psi_r(h) \in \sH_i$. So, by definition \ref{defn:psi} item (3),  $|h(k_h^{-1})-h(e)| \le d(e,\bg)$. Lemma \ref{lem:precise} now implies the claim.
\end{proof}

\begin{defn}\label{defn:E'}
Let $\epsilon>0$ be such that 
$$\frac{\eta\times\mu(E_0)-(1+C_0)(1+Q)\epsilon}{\eta\times\mu(E_0)+(1+C_0)(1+Q)\epsilon}\ge 1-\frac{c}{2}$$
where $c>0$ is an in corollary \ref{cor:key} and $C_0$ is as in the previous lemma. By lemma \ref{lem:approx}, there exist sets $E'_i$ (for $i=0...Q$) such that
\begin{itemize}
\item for some $r>0$, for all $i$, $E'_i$ is $r$-determined,
\item $\eta\times \mu(E_i \Delta E'_i) < \epsilon$ for all $i$.
\end{itemize}
\end{defn}

\begin{defn} Let $E_{ij} = E_i \cap \bphi_r^{-1}(E_j)$. Let $E'_{ij} = E'_i \cap \bphi_r^{-1}(E'_j)$.
\end{defn}

\begin{lem}\label{lem:2epsilon} 
For any $0\le i,j \le Q$,
\begin{enumerate}
\item $E_{ij}, E'_{ij} \subset \sN^r \times X$,
\item Let $\sN^r_* = \sN^r - \cup_{0\le k \le Q} \, \sN^r_k$. Then $E_{0j} \subset (\sN^r_j \cup \sN^r_*)\times X$ (up to a measure zero set),
\item $\eta \times \mu(E_{ij} \Delta E'_{ij}) < (1+C_0)\epsilon$
\end{enumerate}
where $C_0>0$ is as in lemma \ref{lem:bphi}.
\end{lem}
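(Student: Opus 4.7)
The plan is to verify the three assertions in order, each reducing to a short set-theoretic argument once the definitions are unpacked.

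For (1), observe that $\bphi_r$ is only defined on the domain $\sN^r \times X$, so by construction $\bphi_r^{-1}(E_j) \subset \sN^r \times X$ (and likewise for $\bphi_r^{-1}(E'_j)$). Intersecting with $E_i$ or $E'_i$ preserves this containment, which handles both $E_{ij}$ and $E'_{ij}$.

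For (2), I will use the defining property of the approximating sets $E_i$ recalled just before definition \ref{defn:E'}: for $(\eta\times\mu)$-a.e.\ $(h,x) \in E_0$, whenever $g \in G$ satisfies $gh \in \sH_i$ with $0 \le i \le Q$, one has $(\beta^{-i} \times 1_X)(gh, gx) \in E_i$. Suppose $(h,x) \in E_{0j}$ with $h \in \sN^r_i$ for some $0 \le i \le Q$. Taking $g = k_h$, so that $k_hh = \psi_r(h) \in \sH_i$, one has $(k_hh)(e) = -i$ and hence $\beta^{-i}(k_hh) = k_hh + i = \overline{k_hh}$. Therefore
\[
\bphi_r(h,x) \,=\, (\overline{k_hh},\, k_hx) \,=\, (\beta^{-i} \times 1_X)(k_hh, k_hx) \,\in\, E_i.
\]
But $\bphi_r(h,x) \in E_j$ by hypothesis, and $E_i \cap E_j = \emptyset$ for $i \ne j$, forcing $i = j$. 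Hence up to a null set the only possibilities are $h \in \sN^r_j$ or $h \in \sN^r_*$ (i.e.\ $\psi_r(h)$ lies in some $\sH_k$ with $k \notin [0,Q]$), giving the desired containment.

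For (3), the symmetric difference of intersections satisfies the routine inclusion
\[
E_{ij} \Delta E'_{ij} \,\subset\, (E_i \Delta E'_i) \,\cup\, \bphi_r^{-1}(E_j \Delta E'_j),
\]
so it suffices to bound each piece. The first piece has $(\eta \times \mu)$-measure less than $\epsilon$ by definition \ref{defn:E'}. For the second, lemma \ref{lem:bphi} yields $\bphi_{r*}(\eta\times\mu) \le C_0(\eta\times\mu)$ on the image, and hence $(\eta\times\mu)(\bphi_r^{-1}(F)) \le C_0\,(\eta\times\mu)(F)$ for any Borel $F$. Applying this to $F = E_j \Delta E'_j$ gives a contribution of at most $C_0\epsilon$, and summing yields $(\eta\times\mu)(E_{ij}\Delta E'_{ij}) < (1+C_0)\epsilon$, as claimed.

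The only delicate point is the identification in (2) of $\bphi_r(h,x)$ with $(\beta^{-i} \times 1_X)(k_hh, k_hx)$; this is where the definition of $\sN^r_i$ and the overline normalization $\overline{h}(g) = h(g) - h(e)$ must be matched to the shift $\beta$. Once this is in hand the rest is a straightforward application of the approximation bound and the bounded Radon--Nikodym derivative.
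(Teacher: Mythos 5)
Your proof is correct and follows essentially the same approach as the paper's: part (1) is the domain observation, part (2) unwinds the defining a.e.\ property of the $E_i$'s together with the identification $\overline{k_hh} = \beta^{-i}(k_hh)$ for $k_hh \in \sH_i$ and then invokes disjointness, and part (3) combines the set-theoretic inclusion for symmetric differences with the bounded Radon--Nikodym derivative from lemma \ref{lem:bphi}. The one thing you spell out a bit more explicitly than the paper is the matching of the normalization $\overline{\cdot}$ with the shift $\beta^{-i}$, which is a helpful clarification but not a different argument.
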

\begin{proof}
\begin{enumerate}
\item This follows from Dom$(\psi_r) \subset \sN^r$.

\item If $(h,x) \in E_0$ and $g \in G$ is such that $g(h,x) \in \sH_k \times X$ for some $k$ with $0 \le k \le Q$ then by definition of $E_i$, it follows that $\overline{g(h,x)} \in E_k$ with probability one. Thus, if $\bphi_r(h,x)=\overline{g(h,x)}$ (for some $g \in G$) is in $E_j$ for some $0 \le j \le Q$ it follows from the pairwise disjointness of the sets $\{E_k\}_{k=0}^Q$ that $gh \in \sH_j$. This implies $(h,x) \in (\sN^r_j \cup \sN^r_*) \times X$ with probability one.

\item Observe that $E_{ij} \Delta E'_{ij} \subset E_i \Delta E'_i \cup \bphi_r^{-1}(E_j \Delta E'_j)$. Since $\eta\times \mu(E_i \Delta E'_i) < \epsilon$ and  $\eta\times \mu( \bphi_r^{-1}(E_j \Delta E'_j)) < C_0\epsilon$ (by lemma \ref{lem:bphi}) the lemma follows.
\end{enumerate}
\end{proof}

\begin{lem}\label{lem:previous}
For any $i,j$, let $E''_{ij}$ be the smallest $r$-determined set containing $E'_{ij}$. 
Then $E''_{ij} \cap (\sN^r \times X) = E'_{ij}$. Thus for any $k$, $E''_{ij} \cap (\sN^r_k \times X) = E'_{ij} \cap (\sN^r_k \times X)$.
\end{lem}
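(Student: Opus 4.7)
The plan is to prove the two inclusions separately. The inclusion $E'_{ij} \subseteq E''_{ij} \cap (\sN^r \times X)$ is immediate: $E'_{ij} \subseteq E''_{ij}$ by the minimality defining $E''_{ij}$, and $E'_{ij} \subseteq \sN^r \times X$ by lemma \ref{lem:2epsilon}(1). For the reverse inclusion, fix $(h, x) \in E''_{ij} \cap (\sN^r \times X)$. Since $E''_{ij}$ is $r$-determined and contains $E'_{ij}$ minimally, there exists $h' \in [h]_r$ (i.e., $\Block(\Par^n h) = \Block(\Par^n h')$ for $0 \le n \le r$; write $h \sim_r h'$) with $(h', x) \in E'_{ij}$. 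In particular $h' \in \sN^r$ by lemma \ref{lem:2epsilon}(1). We must verify $(h, x) \in E'_i$ and $\bphi_r(h, x) \in E'_j$.

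The first condition is immediate: $E'_i$ is $r$-determined and $h \sim_r h'$, so $(h, x) \in E'_i$ iff $(h', x) \in E'_i$. For the second, since $E'_j$ is $r$-determined it suffices to show that $\bphi_r(h, x)$ and $\bphi_r(h', x)$ share their $X$-coordinate and have their $\sH_0$-coordinates in a common $r$-cylinder. For the $X$-coordinate, unfolding $\bphi_r(h, x) = (\overline{k_h h}, k_h x)$ with $k_h = f_r(g_h)^{-1} \bg^{-1} g_h$, the element $g_h$ (defined by $g_h h = \Par^r h$) factors as $a_r^{-1} \cdots a_1^{-1}$, where each $a_n \in A$ is uniquely determined by $\Block(\Par^{n-1} h)$ (see definition \ref{defn:tsB}). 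Since $h \sim_r h'$ forces these blocks to match for $1 \le n \le r$, we obtain $g_h = g_{h'}$, hence $k_h = k_{h'}$ and $k_h x = k_{h'} x$.

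For the $\sH_0$-coordinates we must show $\Block(\Par^n(k_h h)) = \Block(\Par^n(k_h h'))$ for every $0 \le n \le r$ (these blocks are invariant under adding a constant to a horofunction, so the normalization by $\overline{\cdot}$ is harmless). At $n = r$ we have $\Par^r(\psi_r h) = \bg^{-1} \Par^r h$, which has block $C$ because $h \in \sN^r$ forces $\Par^r h \in B \cap \bg C$; the same holds for $h'$, so both level-$r$ blocks equal $C$. For $0 \le n < r$ the key observation is that $\psi_r h = f_r(g_h)^{-1} \bg^{-1} \Par^r h$ and $\psi_r h' = f_r(g_h)^{-1} \bg^{-1} \Par^r h'$ are $r$-fold descendants of $\bg^{-1} \Par^r h$ and $\bg^{-1} \Par^r h'$ obtained by applying the same group-element shift $f_r(g_h)^{-1}$. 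Since these two horofunctions share the block $C$, lemma \ref{lem:determined} implies that the geodesic descent in $\Gamma$ from $\bg^{-1} \Par^r h$ along its gradient line through $f_r(g_h)$ agrees (as a sequence of group elements and as a sequence of blocks) with the corresponding descent for $\bg^{-1} \Par^r h'$; the intermediate blocks along this descent are precisely $\Block(\Par^n(\psi_r h))$ and $\Block(\Par^n(\psi_r h'))$ for $0 \le n < r$.

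The main obstacle is propagating the level-$r$ block match downward through all levels $n < r$. Because $r$ may exceed the block-window constant $H$ from the definition of $\Block$, one applies lemma \ref{lem:determined} iteratively: starting from the horofunctions $\bg^{-1} \Par^r h$ and $\bg^{-1} \Par^r h'$ with common block $C$, their children along the gradient through $f_r(g_h)$ share blocks; then applying the lemma again to those matching children propagates the match further, and so on down to level $0$. The compatibility of the chosen descent across $h$ and $h'$ is guaranteed by $f_r(g_h) = f_r(g_{h'})$, established in the second paragraph. Once the block sequences of $\psi_r h$ and $\psi_r h'$ coincide at every level $0 \le n \le r$, combining with the identity of $X$-coordinates yields $\bphi_r(h, x) \in E'_j$, completing the proof that $(h, x) \in E'_{ij}$.
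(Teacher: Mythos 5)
Your proposal is correct and follows essentially the same route as the paper's proof: both pull back an $h'$ from the $r$-cylinder of $h$ lying in $E'_{ij}$, observe that $E'_i$ being $r$-determined handles the first coordinate, establish $g_h = g_{h'}$ (hence $k_h = k_{h'}$) from the matching block sequence and Definition \ref{defn:tsB}, and then invoke Lemma \ref{lem:determined} to propagate the level-$r$ block identity $C$ down through the gradient descent to all levels $0 \le n < r$. Your version spells out the factorization of $g_h$ and the $X$-coordinate agreement a bit more explicitly than the paper does, but the key steps and the appeal to Lemma \ref{lem:determined} are the same.
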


\begin{proof}
 It is immediate that $E'_{ij} \subset E''_{ij} \cap (\sN^r \times X)$. Let $(h,x) \in E''_{ij} \cap (\sN^r \times X)$. We need to show that $(h,x) \in E'_{ij}$. Because $E'_i$ is $r$-determined it is immediate that $(h,x) \in E'_i$. Therefore it suffices to show that $\bphi_r(h,x) \in E'_j$. By definition, there exists $h' \in \sH_0$ such that $(h',x) \in E'_{ij}$ and 
$$\Block(\Par^n(h))=\Block(\Par^n(h')) \textrm{ for all } 0\le n \le r.$$

Recall that $g_h \in G$ is such that $g_hh = \Par^r(h)$. It follows that $g_hh'=\Par^r(h')$ too. By definition, $\psi_r(h)=f_r(  g_h )^{-1}\bg^{-1}\Par^r(h)$. Hence $\psi_r(h')=f_r(  g_h )^{-1}\bg^{-1}\Par^r(h')$. Thus if $k_h \in G$ is such that $k_h h =\psi_r(h)$ then $k_h h'=\psi_r(h')$ too.

Since $E'_j$ is $r$-determined, it suffices to show that $\Block(\Par^n(\psi_r(h))) = \Block(\Par^n(\psi_r(h')))$ for all $0 \le n \le r$. Since both $h$ and $h'$ are in $\sN_r$, this is true for $n=r$. The general case follows from lemma \ref{lem:determined} and the fact that if $k_h \in G$ is such that $\psi_r(h) = k_hh$ then $\psi_r(h')=k_hh'$, too.
\end{proof}

\begin{proof}[Proof of theorem \ref{thm:step1}]
By lemma \ref{lem:2epsilon} item (2), $E_{0j} \subset (\sN^r_j \cup \sN^r_*) \times X$. By lemma \ref{lem:2epsilon} item (3) this implies that when $\eta \times \mu(E'_{0j})>0$,
$$\frac{\eta \times \mu(E'_{0j} \cap (\sN^r_j \cup \sN^r_*) \times X)}{\eta \times \mu(E'_{0j})} \ge \frac{\eta \times \mu(E_{0j}) - (1+C_0)\epsilon}{\eta \times \mu(E_{0j}) + (1+C_0)\epsilon}.$$
Since the collection $\{E_{0i}\}_{i=0}^Q$ partitions $E_0$, it follows that there is some $i\ge 0$ with $Q \ge i$ such that
$$\eta \times \mu(E_{0i}) \ge \frac{\eta \times \mu(E_0)}{1+Q}.$$
Fix this value of $i$. By lemma \ref{lem:2epsilon} item (3),
$$\eta \times \mu(E'_{0i}) \ge \eta\times\mu(E_{0i}) - (1+C_0)\epsilon \ge \frac{\eta\times\mu(E_0)}{1+Q}-(1+C_0)\epsilon>0.$$
The last inequality follows from the choice of $\epsilon$. So,
$$\frac{\eta \times \mu(E'_{0i} \cap (\sN^r_i \cup \sN^r_*) \times X)}{\eta \times \mu(E'_{0i})} \ge \frac{\eta \times \mu(E_{0}) - (1+C_0)(1+Q)\epsilon}{\eta \times \mu(E_{0}) + (1+C_0)(1+Q)\epsilon} \ge 1-\frac{c}{2}.$$

The second inequality is true by the choice of $\epsilon$ (see definition \ref{defn:E'}). On the other hand, corollary \ref{cor:key} implies that there are integers $s \ne t$ such that $0\le s,t \le Q$ and
$$\frac{\eta\times\mu\big(E''_{0i} \cap (\sN^r_s\times X)\big)}{\eta \times \mu\big(E''_{0i}\cap (\sN^r \times X)\big)} \ge c \textrm{ and } \frac{\eta\times\mu\big(E''_{0i} \cap (\sN^r_t\times X)\big)}{\eta \times \mu\big(E''_{0i}\cap (\sN^r \times X)\big)} \ge c.$$
Since $E''_{0i} \cap (\sN^r_s \times X)= E'_{0i}\cap (\sN^r_s\times X)$ (lemma \ref{lem:previous}) and $E'_{0i}=E'_{0i}\cap (\sN^r\times X)=E''_{0i} \cap (\sN^r \times X)$ (by definition), this implies that
$$\frac{\eta \times \mu\big(E'_{0i} \cap (\sN^r_s\times X)\big)}{\eta \times \mu(E'_{0i})} = \frac{\eta \times \mu\big(E''_{0i} \cap (\sN^r_s\times X)\big)}{\eta \times \mu\big(E''_{0i} \cap (\sN^r \times X)\big)} \ge c.$$
Similarly, 
$$\frac{\eta \times \mu\big(E'_{0i} \cap (\sN^r_t\times X)\big)}{\eta \times \mu(E'_{0i})} \ge c.$$
Since $s \ne t$, we may assume (after switching $s$ and $t$ if necessary) that $i\ne s$. Since the sets $\sN^r_j$ ($j \in \Z$) partition $\sN^r$, it follows that 
\begin{eqnarray*}
1&=&\frac{\eta \times \mu\big(E'_{0i} \cap (\sN^r\times X)\big)}{\eta \times \mu(E'_{0i})}\\
&\ge& \frac{\eta \times \mu\Big(E'_{0i} \cap \big((\sN_i^r \cup \sN^r_*) \times X\big)\Big)}{\eta \times \mu(E'_{0i})} + \frac{\eta \times \mu\big(E'_{0i} \cap (\sN_s^r\times X)\big)}{\eta \times \mu(E'_{0i})}\\
&\ge& 1-\frac{c}{2} + c > 1.
\end{eqnarray*}
 This contradiction implies the theorem.
\end{proof}

\section{Proof of the key lemma}\label{section:horospherical}


 If $A, B \subset G$ then let $AB=\{ ab \in G|\, a \in A, b \in B\}$. The goal of this section is to prove the key lemma \ref{lem:key}. However, we first prove the following helpful proposition.


\begin{prop}\label{horocycle theorem}
There exists a finite set $R \subset G$ such that for any $s \in \Z$ there exists a finite set $F(s) \subset G$ such that
$$\big\{ g \in G ~\big|~ \exists h \in \sH \textrm{ such that } h(g)=h(e)+s\big\}R \cup F(s) = G.$$
\end{prop}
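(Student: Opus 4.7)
The plan is to take $R$ to be a ball $B(e, C)$ centered at the identity with $C$ depending only on the hyperbolicity constant $\delta$ of $\Gamma$, and to place all group elements of word length less than $|s| + C_0$ into $F(s)$ for some $C_0 = C_0(\delta)$. Denote by $T_s = \{g \in G : \exists h \in \sH,\ h(g) = h(e) + s\}$ the set on the left-hand side of the claimed equation. I would need to show that every $g \in G$ with $|g| \ge |s| + C_0$ lies in $T_s \cdot R$.

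Given such a $g$ with $|g| = n$, first fix a geodesic $\gamma : [0,n] \to \Gamma$ from $e$ to $g$, and set $k = \lfloor (n-s)/2 \rfloor - C_\delta$ for a sufficiently large constant $C_\delta$; the lower bound on $n$ ensures $k \in [0,n]$. Since $G$ is non-elementary, I can choose a geodesic ray $\beta : [0,\infty) \to \Gamma$ with $\beta(t) = \gamma(t)$ for $0 \le t \le k$ and $\beta$ diverging from the extension of $\gamma$ beyond $\gamma(k)$; let $\xi = \beta(\infty) \in \partial \Gamma$. The Busemann-type limit
$$h(x) = \lim_{t \to \infty} \bigl(d(x,\beta(t)) - t\bigr)$$
defines an integer-valued horofunction in $\sH_0$ with $\pi(h) = \xi$; existence of this limit and membership in $\sH$ would follow by essentially the same argument as in the proof of Lemma \ref{lem:existence}.

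Standard $\delta$-hyperbolic geometry, applied to the triangle with vertices $e$, $g$, and a far point along $\beta$, yields $h(g) = n - 2k + O(\delta)$. With $C_\delta$ chosen large enough to dominate the $O(\delta)$ error, this gives both $h(g) \ge s$ and $h(g) - s \le C_\delta'$ for a further constant $C_\delta' = C_\delta'(\delta)$. The distance-like property of $h$ then provides a vertex $g' \in G$ on an $h$-gradient ray starting at $g$ with $h(g') = s$ and $d(g,g') = h(g) - s \le C_\delta'$. Setting $r = (g')^{-1} g$, we have $r \in B(e, C_\delta')$ and $h(g r^{-1}) = h(g') = s = h(e) + s$, so $g = g' \cdot r \in T_s \cdot R$ where we take $R := B(e, C_\delta')$. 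The group elements with $|g| < |s| + C_0$ are placed into $F(s) := B(e, |s| + C_0)$, which is finite.

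The main obstacle will be making the hyperbolic geometry estimates precise enough that $C_\delta$ and $C_\delta'$ are uniform in $g$ and $s$, and verifying rigorously that the Busemann limit lands in $\sH$ (integer-valued, $\epsilon$-convex, and distance-like). A secondary point is the existence of the branching-off ray $\beta$ at $\gamma(k)$: this should follow from non-elementarity, since in a non-elementary word hyperbolic group every vertex of the Cayley graph admits geodesic rays to multiple distinct boundary points.
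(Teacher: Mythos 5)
Your overall strategy matches the paper's: place a point $\gamma(k)$ at distance roughly $(|g|+s)/2$ from $g$ along a geodesic from $e$ to $g$, pick a boundary point $\xi$ ``transverse'' to $\gamma$ there, take the Busemann horofunction $h$ of $\xi$ so that $h(g) \approx |g|-2k \approx s$, and then slide along an $h$-gradient from $g$ by a bounded amount to land exactly in $h^{-1}(s)$. The gap is in the production of $\xi$. You posit a geodesic ray $\beta$ from $e$ with $\beta|_{[0,k]}=\gamma|_{[0,k]}$ which then ``diverges from the extension of $\gamma$,'' and you need this divergence to happen within a universally bounded time so that $(g|\xi)_e = k + O(\delta)$. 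Two things here are unjustified. First, existence: concatenating $\gamma|_{[0,k]}$ with a ray perpendicular to $\gamma$ at $\gamma(k)$ need not give a path that is geodesic from $e$, so you cannot simply splice; and while any sufficiently long geodesic segment extends to a ray (a consequence of regularity of the geodesic language, which the paper proves), that extension may well track $\gamma$. Second, quantitativeness: even once $\beta$ branches off $\gamma$ at $\gamma(k)$, two geodesic rays from $e$ that split there can fellow-travel for an arbitrarily long time in a $\delta$-hyperbolic space, so non-elementarity by itself does not give $(g|\xi)_e \le k + O(\delta)$ with a uniform constant.

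The paper avoids precisely this by never requiring $\beta$ to extend $\gamma$. It proves a ``coarse perpendicular'' lemma (from any vertex $z$ on a geodesic there is a ray leaving the $C$-neighborhood of the geodesic within a universal time $N(C)$, using a growth-rate counting argument) and then a ``coarse isosceles triangle'' lemma: setting $c=r(\infty)$ for that perpendicular ray, $\delta$-thinness of the triangle with vertices $e$, $\gamma(k)$, $c$ forces any geodesics $[e,c]$ and $[g,c]$ to pass within a universal distance of $\gamma(k)$, regardless of whether they agree with $\gamma$ at all. The estimate $|h(g)-h(e)-s|\le B$ is then obtained via the Busemann cocycle identity along $[g,c]$, together with Theorem \ref{thm:CP1} to compare the cocycles along $[g,c]$ and $[e,c]$. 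To repair your argument you should replace the ray $\beta$ extending $\gamma$ with this two-geodesics-to-$c$ device, or else supply a proof that a uniformly-quickly-diverging extension of $\gamma|_{[0,k]}$ to a geodesic ray from $e$ always exists.
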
 

We will say that a constant or a function is {\bf universal} if it depends only on $(G,A)$. 


\begin{lem}
There exists a universal constant $K>0$ such that if $T>K$ and $r_1:[0,T] \to \Gamma$ is any geodesic then there exists a geodesic ray $r_2:[0, \infty) \to \Gamma$ such that 
$$r_1(t)=r_2(t) \, , \, \forall 0 \le t\le T-K.$$
\end{lem}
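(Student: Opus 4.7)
The plan is to apply K\"onig's lemma to a locally finite tree of geodesic continuations in $\Gamma$, invoking $\delta$-hyperbolicity and the non-elementarity of $G$ (the standing assumption of the paper) to force the tree to have infinite depth.

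First, I set $K$ to be a sufficiently large multiple of $\delta$. Define $\mathcal{T}$ to be the rooted tree whose vertices are finite geodesic paths in $\Gamma$ starting at $r_1(0)$ and beginning with the segment $r_1|_{[0,T-K]}$, with an edge from a path to each of its one-step outward extensions (appending a neighbor of the endpoint at distance exactly one greater from $r_1(0)$). Since $\Gamma$ has uniformly bounded valence, $\mathcal{T}$ is locally finite, so by K\"onig's lemma any infinite branch of $\mathcal{T}$ -- whose existence is guaranteed as soon as $\mathcal{T}$ has infinite depth -- yields a geodesic ray $r_2$ of the required form.

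The technical heart is thus showing that $\mathcal{T}$ has infinite depth: there exist arbitrarily long geodesics from $r_1(0)$ passing through $r_1(T-K)$. I would proceed as follows. Since $\Gamma$ is infinite and locally finite, K\"onig's lemma applied to $\Gamma$ itself produces an infinite geodesic ray $\rho:[0,\infty)\to\Gamma$ starting at $r_1(T)$. I then choose $\rho$ so its asymptotic direction is transverse to $r_1$ as seen from $r_1(T)$, meaning the Gromov product $(r_1(0)|\rho(\infty))_{r_1(T)}$ is bounded by a universal constant depending only on $\delta$. Given such a $\rho$, the concatenation $r_1\cdot\rho$ is a $(1,O(\delta))$-quasi-geodesic ray from $r_1(0)$, and by the Morse lemma (stability of quasi-geodesics in hyperbolic spaces) its length-$n$ prefixes lie within bounded Hausdorff distance of true geodesic segments of length $n$ from $r_1(0)$. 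A pigeonhole argument on which such true geodesics begin with $r_1|_{[0,T-K]}$, together with K\"onig's lemma applied once more, produces vertices of $\mathcal{T}$ at arbitrary depth, as needed.

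The principal obstacle is the choice of $\rho$ with bounded $(r_1(0)|\rho(\infty))_{r_1(T)}$, which requires $\partial\Gamma$ to contain directions ``opposite'' to $r_1(0)$ as seen from $r_1(T)$. This is exactly where non-elementarity of $G$ enters: since $G$ acts minimally on $\partial\Gamma$, boundary points approximating any prescribed direction (in particular, ones for which the Gromov product at $r_1(T)$ with $r_1(0)$ is small) are plentiful. Concretely, one may take a loxodromic $g\in G$ whose axis passes near $r_1(T)$ transversely to $r_1$ and let $\rho$ follow one end of that axis. The universal constant $K$ is then chosen to absorb both the $O(\delta)$-slack from the thin-triangle comparison and the $O(\delta)$-error incurred in passing from the Morse-lemma approximation to a geodesic literally extending $r_1|_{[0,T-K]}$.
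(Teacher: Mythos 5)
Your proposal diverges substantially from the paper's approach, which is entirely combinatorial: the paper invokes the theorem (ECHLPT92, Thm.~3.4.5) that the set of geodesic words in a word hyperbolic group is a regular language $\sL$, and then uses a basic fact about deterministic finite-state automata --- there are only finitely many states that inevitably lead to failure, so if $K$ exceeds that number, any accepted word of length $>K$ has a prefix (obtained by deleting the last $K$ letters) that ends in a non-dead state and is therefore infinitely extendable within $\sL$. Since $\sL$ is prefix-closed, the resulting infinite word traces out a genuine geodesic ray extending $r_1|_{[0,T-K]}$ exactly. No hyperbolic geometry beyond the automaticity of geodesics is needed.

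Your argument has a genuine gap at its final step. The first half is essentially salvageable: by cocompactness and non-elementarity there is a bi-infinite geodesic $\gamma$ with endpoints $\xi_1,\xi_2$ within a universal distance $D$ of $r_1(T)$, and the ultrametric inequality for Gromov products forces $\min\{(r_1(0)\,|\,\xi_1)_{r_1(T)},\,(r_1(0)\,|\,\xi_2)_{r_1(T)}\}\le D+O(\delta)$, giving the transverse direction $\xi=\rho(\infty)$; a geodesic ray from $r_1(T)$ to $\xi$ exists by properness. The Morse lemma then produces, for each $n$, a genuine geodesic $\sigma_n$ from $r_1(0)$ of length roughly $n$ lying within a universal Hausdorff distance $M$ of the quasi-geodesic $r_1\cdot\rho$. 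But the lemma demands \emph{literal} coincidence $r_2(t)=r_1(t)$ on $[0,T-K]$, and $M$-Hausdorff-closeness of $\sigma_n$ to $r_1|_{[0,T]}$ does not imply that any $\sigma_n$ (or any subsequential limit ray) contains $r_1|_{[0,T-K]}$ as an initial segment. Your appeal to a ``pigeonhole argument on which such true geodesics begin with $r_1|_{[0,T-K]}$'' presupposes that some of them do; nothing in the Morse-lemma estimate guarantees this. Pigeonholing on the finitely many length-$(T-K)$ geodesic prefixes near $r_1|_{[0,T-K]}$ only yields some infinitely-extendable segment $\tau$ with $\tau(0)=r_1(0)$ and $d_{\mathrm{Haus}}(\tau,r_1|_{[0,T-K]})\le M$, and there is no reason $\tau$ should equal $r_1|_{[0,T-K]}$. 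This is exactly the discrete, combinatorial obstruction (dead ends and near-dead ends in the Cayley graph) that the regular-language argument is designed to handle uniformly, and which a purely coarse-geometric argument such as yours cannot see.
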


\begin{proof} A word in the generating set $A$ is called geodesic if the path in the Cayley graph $\Gamma$ that it determines (starting from the identity element) is geodesic. It is well-known that because $G$ is word hyperbolic, the set of all geodesic words forms a regular language $\sL$ (e.g., theorem 3.4.5 of [ECHLPT92]). Equivalently, $\sL$ is recognized by a deterministic finite state automaton. This fact is contained in theorem 1.2.7 of [ECHLPT92] where it is attributed to Kleene, Rabin and Scott. 

It is easy to see that if $\sL$ is any infinite language accepted by a finite state automaton then there exists a $K>0$ (depending only on $\sL$) such that if $w=s_1...s_n \in \sL$ and $n>K$ then the subword $s_1...s_{n-K}$ is infinitely extendable in the following sense. There exists an infinite word in $\sL$ that begins with $s_1...s_{n-K}...$. This is because there are only a finite number of states that inevitably lead to a failed state. So if $K$ is larger than the number of such states, the path $s_1...s_{n-K}$ must necessarily end in a state that does not inevitable lead to a failed state and is therefore, infinitely extendable.

Now let $r_1:[0,T] \to \Gamma$ be a geodesic. Then the word determined by $r_1$ restricted to $[0,T-K]$ is infinitely extendable. This implies the lemma.
\end{proof}

\begin{lem}
If $x,y \in G$ then there exists a bi-infinite geodesic $\gamma$ with $d(x,\gamma),d(y,\gamma) \le 2K$ where $K>0$ is as in the previous lemma. Here, $d(x,\gamma)=\inf\{d(x,y)\,|\, y \in \gamma\}$.
\end{lem}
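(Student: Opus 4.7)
The plan is to apply the previous lemma twice (to a geodesic between $x$ and $y$ and to its reverse) to produce two geodesic rays, show they converge to distinct boundary points of $\Gamma$, and then take a bi-infinite geodesic between those boundary points, which standard hyperbolic geometry will force to pass near $x$ and $y$.

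Concretely, let $\alpha:[0,L]\to\Gamma$ be a geodesic from $x$ to $y$ with $L=d(x,y)$. If $L\le K$ I would first pre-extend $\alpha$ past $y$ (which is possible since $G$ is non-elementary), replacing $y$ by a farther point and absorbing the extra displacement into the final bound; so assume $L>K$. The previous lemma applied to $\alpha$ yields a geodesic ray $r^+:[0,\infty)\to\Gamma$ with $r^+(0)=x$ and $r^+(t)=\alpha(t)$ for $0\le t\le L-K$, so in particular $d(y,r^+)\le K$. Applying it to the reverse $\alpha^{\mathrm{rev}}(t):=\alpha(L-t)$ produces a geodesic ray $r^-:[0,\infty)\to\Gamma$ with $r^-(0)=y$ and $d(x,r^-)\le K$. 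Write $\xi^\pm := r^\pm(\infty)\in\partial\Gamma$.

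I would then verify $\xi^+\ne\xi^-$. Two geodesic rays in a $\delta$-hyperbolic space converging to the same boundary point must become fellow-travellers at bounded Hausdorff distance (after parameter alignment). However $r^+$ and $r^-$ traverse $\alpha$ in opposite directions for a long initial stretch: at parameter $L-K$ the points $r^+(L-K)=\alpha(L-K)$ and $r^-(L-K)=\alpha(K)$ lie at distance $L-2K$ from each other along $\alpha$. For $L$ sufficiently large (which we can arrange by pre-extending $\alpha$), this exceeds any fixed hyperbolic bound, giving the desired contradiction. Since $\xi^+\ne\xi^-$, a bi-infinite geodesic $\gamma:\R\to\Gamma$ from $\xi^-$ to $\xi^+$ exists, obtained by an Arzel\`a--Ascoli extraction from the family of geodesic segments $\gamma_n$ joining $r^-(n)$ to $r^+(n)$.

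Finally, for each $n$ the segment $\gamma_n$ has the same endpoints as the concatenation of a tail of $r^-$, a geodesic segment from $y$ to $x$, and a tail of $r^+$; this concatenation is a quasi-geodesic from $r^-(n)$ to $r^+(n)$ with uniformly bounded quasi-geodesic constants, and by the Morse stability lemma in a $\delta$-hyperbolic space it lies at uniformly bounded Hausdorff distance from $\gamma_n$, hence in the limit from $\gamma$. Combined with $x\in r^+$ and $d(y,r^+)\le K$, this gives the bounds $d(x,\gamma)\le 2K$ and $d(y,\gamma)\le 2K$ once the universal constant $K$ produced by the previous lemma is chosen large enough to dominate the hyperbolicity constant and the Morse constant — which is permissible since $K$ arises from a regular-language/finite-state argument and can always be inflated. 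The main obstacle in this proof is exactly this last book-keeping step: tracking constants to replace the natural bound of $O(\delta)$ with the clean bound $2K$ claimed in the statement.
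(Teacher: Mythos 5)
Your approach diverges completely from the paper's, and unfortunately it contains a genuine error at the step you flag as ``the last book-keeping step.'' The paper's proof is far more elementary: split on whether $d(x,y)\le 2K$. If yes, take any bi-infinite geodesic through $x$; then $d(x,\gamma)=0$ and $d(y,\gamma)\le d(x,y)\le 2K$. If no, let $r_1:[0,T]\to\Gamma$ be a geodesic from $x$ to $y$ and invoke the language-theoretic extendability argument (the one proving the previous lemma, applied at \emph{both} ends) to extend $r_1|_{[K,T-K]}$ directly to a bi-infinite geodesic $\gamma$; then $\gamma$ passes through $r_1(K)$ and $r_1(T-K)$, which are within $K\le 2K$ of $x$ and $y$ respectively. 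No boundary points, no Arzel\`a--Ascoli, no Morse lemma are needed.

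The concrete flaw in your argument: you claim the concatenation of a tail of $r^-$, the segment $[y,x]$, and a tail of $r^+$ is ``a quasi-geodesic with uniformly bounded quasi-geodesic constants.'' It is not. By construction $r^+$ agrees with $\alpha$ on $[0,L-K]$, so starting from $x$ it travels all the way to $\alpha(L-K)$ (within $K$ of $y$) before escaping to $\xi^+$; symmetrically $r^-$ starting from $y$ travels to $\alpha(K)$ (within $K$ of $x$) before escaping to $\xi^-$. So your concatenation runs $r^-(n)\to(\text{near }x)\to y\to x\to(\text{near }y)\to r^+(n)$. The sub-path from $y$ to $\alpha(L-K)$ (via $x$) has length about $2L-K$ but displacement only $K$; the ratio diverges with $L$, so the quasi-geodesic constants are \emph{not} uniform, and the Morse stability lemma does not apply as you invoke it. (A repair is possible: observe that $\alpha(K)$ lies on \emph{both} rays $r^+$ and $r^-$ and use the thin ideal triangle with vertices $\xi^-,\alpha(K),\xi^+$ to conclude $\gamma$ passes within $O(\delta)$ of $\alpha(K)$, and likewise for $\alpha(L-K)$ — but this is a different argument than the one you wrote.) Beyond this, the ``pre-extend $\alpha$'' device for small $L$ is both unnecessary and delicate for producing the exact bound $2K$, whereas the paper's case $d(x,y)\le 2K$ handles it in one line. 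I'd also note the parenthetical ``Two geodesic rays... must become fellow-travellers at bounded Hausdorff distance (after parameter alignment)'' needs the alignment addressed before contrasting values at equal parameters — though the underlying fact that $\xi^+\ne\xi^-$ is indeed correct.
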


\begin{proof}
If $d(x,y) \le 2K$ then let $\gamma$ be any bi-infinite geodesic through $x$. Otherwise, let $r_1:[0,T]\to \Gamma$ be a geodesic with $r_1(0)=x$, $r_1(T)=y$. It follows from the previous lemma that $r_1$ restricted to $[K,T-K]$ can be extended to a bi-infinite geodesic: there exists a geodesic $r_2:\R \to \Gamma$ such that $r_2(t)=r_1(t)$ for all $t \in [K,T-K]$. Let $\gamma$ be this geodesic. In both cases, $d(x,\gamma),d(y,\gamma) \le 2K$. 
\end{proof}
The proof of the next lemma is similar to the one above.
\begin{lem}
If $x, y \in G$ then there exists a geodesic ray $r:[0,\infty)\to X$ such that $r(0)=x$ and $d(y,r[0,\infty)) \le K$ where $K>0$ is as in the previous lemma.
\end{lem}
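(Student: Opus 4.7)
The proof should closely parallel the previous lemma, but using only forward-extendability of geodesic words (since we want a one-sided ray from $x$, not a bi-infinite geodesic). The plan is as follows.

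First I would handle the trivial case $d(x,y)\le K$. Here any geodesic ray based at $x$ works, and such a ray exists because $G$ is nonelementary word hyperbolic: the language $\sL$ of geodesic words is infinite, so by the previous lemma's argument the empty prefix is infinitely forward-extendable (or equivalently, pick any geodesic ray from $e$ and translate by $x$).

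For the main case $d(x,y) > K$, let $T = d(x,y)$ and let $r_1\colon [0,T]\to\Gamma$ be a geodesic with $r_1(0)=x$, $r_1(T)=y$. Translating by $x^{-1}$, the edges of $r_1$ spell a geodesic word $w=s_1\cdots s_T\in\sL$ of length $T>K$. By exactly the automaton argument recalled in the previous lemma, the prefix $s_1\cdots s_{T-K}$ is infinitely forward-extendable in $\sL$; that is, there exists an infinite geodesic word $s_1 s_2 \cdots s_{T-K} s_{T-K+1} s_{T-K+2}\cdots$ in $\sL$. Reading this word from $x$ produces a geodesic ray $r\colon [0,\infty)\to\Gamma$ with $r(0)=x$ and $r(t)=r_1(t)$ for all $0\le t\le T-K$. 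In particular
\[
d\bigl(y,\,r[0,\infty)\bigr)\;\le\; d\bigl(r_1(T),\,r_1(T-K)\bigr)\;=\;K,
\]
which is the required estimate.

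I do not expect any significant obstacle: the only ingredients are (i) the regular-language structure of geodesic words in a word hyperbolic group and (ii) the observation that the forward-extendability argument from the previous lemma requires only a one-sided extension. The single point to be slightly careful about is the small-distance case $d(x,y)\le K$, which needs the existence of at least one geodesic ray from $x$; this is immediate from nonelementarity of $G$.
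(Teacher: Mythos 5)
Your proof is correct and is essentially the argument the paper intends: the paper simply remarks that the proof is similar to the bi-infinite case, and your argument (split on $d(x,y)\le K$ versus $d(x,y)>K$, then extend the initial segment $r_1|_{[0,T-K]}$ of a geodesic from $x$ to $y$ to a ray via the regular-language extension lemma, giving $d(y,r[0,\infty))\le K$) is exactly that adaptation.
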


\begin{lem}[Existence of Coarse Perpendiculars]
There exists a universal function $N:[0,\infty) \to [0,\infty)$ satisfying the following. Let $\gamma$ be any geodesic, $z$ be any vertex on $\gamma$ and $C>0$. Then there exists a geodesic ray $r:[0,\infty) \to \Gamma$ and a number $t \in [0, N(C)]$ such that $r(0)=z$ and $d(r(t),\gamma) > C$.
\end{lem}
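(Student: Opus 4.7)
The plan is to argue by contradiction, using the nonelementarity of $G$ together with an Arzel\`a--Ascoli-type compactness argument on the locally finite Cayley graph $\Gamma$. Suppose no such function $N$ exists. Then there is a constant $C>0$ and sequences $\gamma_n$ of geodesics with $z_n \in \gamma_n$ such that every geodesic ray $r:[0,\infty) \to \Gamma$ based at $z_n$ satisfies $d(r(t),\gamma_n) \le C$ for every $t \in [0,n]$. Using transitivity of $G$ on the vertex set of $\Gamma$, we may translate so that $z_n = e$ for every $n$.

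Since $\Gamma$ is locally finite and each $\gamma_n$ passes through $e$, a standard diagonal extraction produces a subsequence along which $\gamma_n$ converges locally uniformly to a geodesic $\gamma_\infty$ containing $e$; the limit is either a bounded segment (if the lengths of the $\gamma_n$ remain bounded) or a bi-infinite geodesic. I claim that for every geodesic ray $r$ based at $e$ one has $d(r(t),\gamma_\infty) \le C$ for all $t \ge 0$. Indeed, fix $t$; the hypothesis gives $d(r(t),\gamma_n) \le C$ for all $n \ge t$, and the local uniform convergence $\gamma_n \to \gamma_\infty$, combined with the local finiteness of $\Gamma$, lets one pass to the limit in this inequality.

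If $\gamma_\infty$ is a bounded segment, then its $C$-neighborhood is a finite subset of $\Gamma$ and so cannot contain the image of any geodesic ray: contradiction. Otherwise $\gamma_\infty$ is bi-infinite with two distinct boundary endpoints $\xi_\pm \in \partial \Gamma$, and then a standard hyperbolic-geometry argument (closest-point projections of $r(t)$ to $\gamma_\infty$ must escape to one of the two ends of $\gamma_\infty$) forces $r(\infty) \in \{\xi_+,\xi_-\}$ for every geodesic ray $r$ from $e$. Since $\Gamma$ is proper and $\delta$-hyperbolic, every boundary point is $r(\infty)$ for some such $r$, so $|\partial \Gamma| \le 2$; this contradicts the nonelementarity of $G$, which forces $\partial \Gamma$ to be uncountable.

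The step I expect to be the main obstacle is the limit passage in the middle paragraph: one has to verify that $d(r(t),\gamma_n) \le C$ genuinely yields $d(r(t),\gamma_\infty) \le C$ after extraction, and that the compactness argument delivers a geodesic (rather than a degenerate limit) while handling both the bounded-length and unbounded-length cases in a uniform way. Everything else, namely the fact that rays within bounded distance of a bi-infinite geodesic converge to one of its endpoints, and that every boundary point is realized by a ray from the basepoint, is standard in a proper $\delta$-hyperbolic space, and the proof should just string these facts together along the outline above.
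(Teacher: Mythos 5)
Your proposal is correct and takes a genuinely different route from the paper's. The paper's argument is direct and (semi-)quantitative: by theorem (Co93, th\'eor\`eme 7.2), $|S(z,n)|$ grows exponentially in $n$, while $|\Nb(\gamma,K+C)\cap S(z,n)|$ is uniformly bounded, so for $n$ at least some universal $N_1(K+C)$ there is a vertex $w\in S(z,n)$ with $d(w,\gamma)>K+C$; the preceding lemma extends $[z,w]$ to a geodesic ray from $z$ passing within $K$ of $w$, and the triangle inequality yields $d(r(t),\gamma)>C$ for some $t\le 2K+N_1(K+C)$. This produces an explicit-looking function $N$, at the cost of invoking the exponential-growth theorem. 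Your argument is a soft compactness/contradiction argument that yields no effective bound, but it is equally valid and in some ways more elementary, relying only on properness, Arzel\`a--Ascoli, the fact that a ray uniformly close to a complete geodesic converges to one of its two endpoints, the existence of a ray to each boundary point, and $|\partial\Gamma|>2$ for nonelementary $G$. The limit passage you flag as the main obstacle is routine: for each $R$, local finiteness means $\gamma_n\cap B(e,R)$ takes only finitely many values, so a diagonal argument makes it eventually constant, and then the point $p_n\in\gamma_n$ with $d(r(t),p_n)\le C$ lies in the fixed compact ball $B(e,t+C)$ and is eventually a point of $\gamma_\infty$. One small correction: your trichotomy omits the case where $\gamma_\infty$ is a geodesic \emph{ray} (one endpoint at infinity), which can occur when the $\gamma_n$ are finite segments through $e$ whose two half-lengths behave very differently. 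This does not hurt the proof --- in that case every ray from $e$ converges to the single endpoint of $\gamma_\infty$, giving $|\partial\Gamma|\le 1$ --- but you should either add the ray case or phrase the dichotomy simply as ``$\gamma_\infty$ has at most two points at infinity, hence $|\partial\Gamma|\le 2$.''
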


\begin{proof}
Recall that for $g \in G$, $S(g,n)$ and $B(g,n)$ denote the sphere and the ball of radius $n$ centered at $g \in G$. Let $\Nb(\gamma,K+C)$ denote the radius-$(K+C)$ neighborhood of $\gamma$. Then $\big|\Nb(\gamma,K+C) \cap S(z,n)\big|$ grows linearly in $n$ while $|S(z,n)|$ grows exponentially (by theorem \ref{thm:growth}). So there exists a universal function $N_1:\R\to \R$ such that for all $n\ge N_1(K+C)$, $S(z,n) \nsubseteq \Nb(\gamma,K+C)$.

Let $w \in S(z,N_1(K+C))$ be such that $d(w,\gamma)>K+C$. By the previous lemma, there exists a geodesic ray $r:[0,\infty)\to \Gamma$ such that $r(0)=z$ and $d(w, r[0,\infty)) \le K$. Thus there exists a $t>0$ with $d(w,r(t))\le K$. By the triangle inequality, this implies $d(\gamma,r(t)) \ge C$. 

By the triangle inequality again, $t=d(r(t),z)) \le d(r(t),w) + d(w,z) \le K + K+N_1(C)$. Hence the lemma is proven with $N(C)=2K + N_1(C)$.

\end{proof}

\begin{lem}[Existence of Coarse Isosceles Triangles]
There exists a universal constant $C>0$ such that the following holds. Let $\gamma \subset \Gamma$ be any geodesic. Let $z$ be a vertex on $\gamma$. Let $a$ and $b$ be the endpoints of $\gamma$ (which may be on $\partial \Gamma$). Then there exists a point $c \in \partial \Gamma$ so that if $[a,c]$ and $[b,c]$ are any two geodesics from $a$ to $c$ and from $b$ to $c$ respectively then there exist vertices $a' \in [a,c]$ and $b' \in [b,c]$ so that $d(a',z),d(b',z) \le C$.
\end{lem}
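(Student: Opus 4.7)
The plan is to take $c$ as the endpoint at infinity of a geodesic ray at $z$ that leaves $\gamma$ coarsely perpendicularly. Concretely, I would invoke the previous lemma with parameter $C_0:=4\delta$ to obtain a geodesic ray $r:[0,\infty)\to\Gamma$ with $r(0)=z$ and some $t_0\le N(C_0)$ satisfying $d(r(t_0),\gamma)>C_0$, and set $c:=r(\infty)\in\partial\Gamma$.

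The first real step is to show that $r$ never returns close to $\gamma$: $d(r(t),\gamma)>2\delta$ for every $t\ge t_0$. Indeed, if $d(r(t),y')\le 2\delta$ for some $t>t_0$ and $y'\in\gamma$, applying the thin-triangle property to the triangle with vertices $z,r(t),y'$ places $r(t_0)\in [z,r(t)]$ within $\delta$ of $[z,y']\cup [r(t),y']$. Since $[z,y']\subset\gamma$ and $[r(t),y']$ has length at most $2\delta$ (hence lies in the $2\delta$-neighbourhood of $\gamma$), the $\delta$-neighbourhood of $[z,y']\cup [r(t),y']$ lies in the $3\delta$-neighbourhood of $\gamma$. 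This contradicts $d(r(t_0),\gamma)>C_0=4\delta$.

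Next, given any geodesic $[a,c]$, I would consider the (possibly ideal) triangle with sides $\gamma_{az}\subset\gamma$ (from $a$ to $z$), $r$ (from $z$ to $c$), and $[a,c]$. By the thin-triangle property (valid for ideal triangles up to a small inflation of the constant), every point of $[a,c]$ lies in the $\delta$-neighbourhood of $\gamma_{az}\cup r$. Since $[a,c]$ issues from $a$ close to $\gamma_{az}$ and approaches $c$ close to $r$, a continuity argument along $[a,c]$ produces a switch point $p_*$ with $d(p_*,\gamma_{az})=d(p_*,r)\le \delta$. Writing $q_2=r(t_1)$ for the witness on $r$, one has $d(r(t_1),\gamma)\le 2\delta$, whence the previous step forces $t_1<t_0$ and so $d(z,p_*)\le t_1+\delta\le N(C_0)+\delta$. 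A vertex $a'\in[a,c]$ within distance $1$ of $p_*$ then satisfies $d(a',z)\le N(C_0)+\delta+1$; the same argument applied to $[b,c]$ yields $b'$, and we set $C:=N(C_0)+\delta+1$.

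The main obstacle is handling the case where $a$ or $b$ (or both) lie on $\partial\Gamma$: the associated triangles are ideal, so both the thin-triangle property and the existence of the switch point require their ideal versions. These are standard (approximate by finite triangles $[a_n,c_n]$ with $a_n\to a$ along $\gamma$ and $c_n=r(n)$, then extract limits), but must be carried out with a little care. One also needs to check $c\notin\{a,b\}$ so that $[a,c]$ and $[b,c]$ are honest geodesics, which follows from the divergence of $r$ away from $\gamma$ established in the second step.
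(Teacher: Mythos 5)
Your proposal is correct and essentially mirrors the paper's argument: take $c=r(\infty)$ along a coarse perpendicular emanating from $z$ and then exploit $\delta$-thinness of the (possibly ideal) triangle with vertices $a,z,c$. The paper is slightly more direct: it takes $C_0=\delta$ and applies the thin-triangle condition to the single point $r(t)$ on the side from $z$ to $c$ (since $d(r(t),\gamma)>\delta$, that point must be $\delta$-close to $[a,c]$, giving $a'$ with $d(a',z)\le\delta+N(\delta)$), thereby avoiding your non-return estimate and switch-point argument along $[a,c]$.
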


\begin{proof}
Recall that $\delta>0$ is the hyperbolicity constant of $\Gamma$. By the previous lemma, there exists a geodesic ray $r:[0,\infty)\to \Gamma$ and a $t\ge 0$, so that $r(0)=z$, $d(r(t), \gamma) \ge \delta$ and $t \le N(\delta)$. Let $c=r(\infty) \in \partial \Gamma$. Let $[a,c]$, $[b,c]$ be any geodesics from $a$ to $c$ and from $b$ to $c$ respectively. 

Consider the triangle with vertices $a,z,c$ and geodesic sides $\gamma$, $r$ and $[a,c]$. Because this triangle is $\delta$-thin, the point $r(t)$ is within the $\delta$-neighborhood of $\gamma \cup [a,c]$. By construction, $d(r(t),\gamma) > \delta$. Hence there exists an element $a' \in [a,c]$ with $d(r(t),a')\le \delta$. Similarly, there exists a vertex $b' \in [b,c]$ with $d(r(t),b') \le \delta$.

Since $t\le N(\delta)$, this implies that $d(z,a') \le \delta + N(\delta)$ and $d(z,b')\le \delta +N(\delta)$. This implies the lemma with $C=\delta + N(\delta)$. 

\end{proof}

\begin{lem}
There exists a universal constant $B$ such that if $x,y \in G$, $s \in \N$ and $d(x,y) >s \ge 0$ then there exists a horofunction $h \in \sH$ such that $|h(x)-h(y)-s|\le B$.
\end{lem}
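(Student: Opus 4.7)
The strategy is to construct a suitable boundary point $\xi \in \partial \Gamma$ via the Coarse Isosceles Triangle lemma applied at a well-chosen vertex $z$ on the geodesic $[x,y]$, and then read off $h(x)-h(y)$ from the $h$-gradient ray structure of any horofunction $h$ with $\pi(h)=\xi$.

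First, fix a geodesic $\gamma$ from $x$ to $y$ in $\Gamma$ (of integer length $d(x,y)$). Since $d(x,y)>s\ge 0$, the integer $n_0:=\lfloor (d(x,y)-s)/2\rfloor$ is nonnegative, so we may let $z$ be the vertex on $\gamma$ with $d(y,z)=n_0$. Then $d(x,z)-d(y,z)=d(x,y)-2n_0\in\{s,s+1\}$. This is the target horofunction gap.

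Next, apply the Existence of Coarse Isosceles Triangles lemma to $\gamma$ and $z$: there is a universal constant $C>0$ and a point $\xi\in\partial\Gamma$ such that any geodesic from $x$ to $\xi$ and any geodesic from $y$ to $\xi$ contains a vertex within distance $C$ of $z$. By the same Busemann-limit argument used in the proof of Lemma \ref{lem:existence} (taking $u_n\to\xi$ along a geodesic ray from $e$ to $\xi$ and extracting a subsequential limit of $d(\cdot,u_n)-d(e,u_n)$, cf.\ [CP01, proposition 2.9]), there exists $h\in\sH$ with $\pi(h)=\xi$. Let $r_x$ and $r_y$ be $h$-gradient rays starting at $x$ and $y$; by [CP01, Proposition 3.3] these are geodesic rays to $\pi(h)=\xi$, so the lemma produces vertices $x'\in r_x$ and $y'\in r_y$ with $d(x',z),d(y',z)\le C$.

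Finally, assemble the estimate. The $h$-gradient property gives $h(x)-h(x')=d(x,x')$ and $h(y)-h(y')=d(y,y')$, while the distance-like property immediately implies that $h$ is $1$-Lipschitz, so $|h(x')-h(y')|\le d(x',y')\le 2C$. The triangle inequality gives $|d(x,x')-d(x,z)|\le C$ and $|d(y,y')-d(y,z)|\le C$. Combining,
\[
\big|\,h(x)-h(y)-(d(x,z)-d(y,z))\,\big|\le 4C,
\]
and since $d(x,z)-d(y,z)\in\{s,s+1\}$ we conclude $|h(x)-h(y)-s|\le 4C+1$. Setting $B:=4C+1$ gives a universal constant, completing the proof.

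The only step needing care is the verification that the Coarse Isosceles Triangle lemma applies to the $h$-gradient rays $r_x, r_y$ rather than to some abstract pair of geodesics from $x, y$ to $\xi$; this is handled by the quantifier order in that lemma's statement, which fixes $\xi$ first and then asserts the conclusion for \emph{any} geodesics from $x$ and $y$ to $\xi$. The other ingredients (existence of $h$-gradient rays, their being geodesic, the $1$-Lipschitz property, and the existence of a horofunction with prescribed point at infinity) are all supplied by the preceding machinery, so the proof is essentially the geometric computation sketched above.
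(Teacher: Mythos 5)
Your proof is correct and the overall skeleton (shifted-midpoint $z$ on $[x,y]$, coarse isosceles triangle lemma at $z$ to produce $\xi$ and nearby vertices $x',y'$, then read off the horofunction gap) matches the paper's. The genuinely different part is the final assembly: the paper works with two separate Busemann cocycles $\phi_x, \phi_y$, one for each ray $[x,c]$ and $[y,c]$, evaluates $\phi_x(x,x')=d(x,x')$ and $\phi_y(y',y)=-d(y',y)$, and then has to compare $\phi_x(y',y)$ with $\phi_y(y',y)$ by invoking Theorem~\ref{thm:CP1}, paying a $128\delta$ penalty and arriving at $B=2+4C+128\delta$. You instead fix a single $h\in\sH$ with $\pi(h)=\xi$ first, and then use the quantifier order in the Coarse Isosceles Triangle lemma (``exists $\xi$ such that for any geodesics to $\xi$...'') to apply it to the $h$-gradient rays $r_x, r_y$, which by [CP01, Prop.\ 3.3, 4.1] are legitimate geodesic rays from $x,y$ to $\xi$. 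This gives the exact relations $h(x)-h(x')=d(x,x')$ and $h(y)-h(y')=d(y,y')$ for the same $h$, so no cocycle comparison is needed, the $128\delta$ term disappears, and you land at $B=4C+1$. Your choice $d(y,z)=\lfloor(d(x,y)-s)/2\rfloor$ is also a marginal simplification of the paper's two-step midpoint-then-shift construction, giving the target gap $d(x,z)-d(y,z)\in\{s,s+1\}$ directly. Both proofs are valid; yours is a mild streamlining that trades the Busemann-cocycle formalism and Theorem~\ref{thm:CP1} for the $h$-gradient ray machinery already set up in \S\ref{symbolic coding}, and it produces a sharper (though inessential, since only universality of $B$ is needed) constant.
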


\begin{proof}
Let $\gamma$ be a geodesic in $\Gamma$ from $x$ to $y$. Let $z' \in G$ be an approproximate midpoint of the geodesic segment between $x$ and $y$. That is to say, $z'$ is a vertex on $\gamma$ satisfying
$$d(z',x)\le \frac{d(x,y)+1}{2} \textrm{ and } d(z',y) \le \frac{d(x,y)+1}{2}.$$
Let $z$ be a vertex on $\gamma$ with $d(x,z)=d(x,z')+\lfloor s/2 \rfloor$.

By the previous lemma, there exists a point $c \in \partial \Gamma$ so that if $[x,c]$ and $[y,c]$ are any two geodesics from $x$ to $c$ and from $y$ to $c$ respectively then there exists vertices $x' \in [x,c]$ and $y' \in [y,c]$ so that $d(x',z),d(y',z) \le C$ where $C$ is universal.

Recall that the Busemann cocycle associated to a geodesic ray $r:[0,\infty) \to \Gamma$ is the function $\phi:G\times G \to \R$ defined by
$$\phi(g_1,g_2) = \lim_{t\to \infty} d(g_1,r(t)) - d(g_2,r(t)).$$
It satisfies the cocycle identity $\phi(g_1,g_3) = \phi(g_1,g_2)+\phi(g_2,g_3)$, the antisymmetry $\phi(g_1,g_2)=-\phi(g_2,g_1)$ and the inequality $|\phi(g_1,g_2)| \le d(g_1,g_2)$ for all $g_1,g_2,g_3 \in G$. The function $h(g):=\phi(g,e)$ is a horofunction [CP01, Proposition 2.9] and $\phi(g_1,g_2)=h(g_1)-h(g_2)$. 

Let $\phi_x$ be the Busemann cocycle associated to the ray $[x,c]$ and let $\phi_y$ be the Busemann cocycle associated to the ray $[y,c]$. Let $h_x \in \sH$ be the Busemann horofunction $h_x(g):=\phi_x(g,e)$. It suffices to show that $|h_x(x)-h_x(y)-s|\le B$, i.e., $|\phi_x(x,y) -s| \le B$ for some universal constant $B$.

By the cocycle identity, $\phi_x(x,y) = \phi_x(x,x') + \phi_x(x',y') + \phi_x(y',y)$. Since $|\phi_x(x',y')| \le d(x',y') \le 2C$, 
$$|\phi_x(x,y)-s| \le |\phi_x(x,x') + \phi_x(y',y)-s| + 2C.$$
By theorem \ref{thm:CP1}, $|\phi_x(y',y) - \phi_y(y',y)| \le 128\delta$. Thus,
$$|\phi_x(x,y)-s| \le |\phi_x(x,x') + \phi_y(y',y)-s| + 2C + 128\delta.$$
Since $x' \in [x,c]$, $\phi_x(x,x')= d(x,x')$. Since $y' \in [y,c]$, $\phi_y(y',y) = - d(y',y)$. Hence 
$$|\phi_x(x,y)-s| \le |d(x,x') - d(y,y')-s| + 2C + 128\delta.$$
According to the triangle inequality, $|d(x,x')- d(x,z)| \le d(z,x')\le C$ and $|d(y,y')- d(y,z)| \le d(z,y')\le C$. So
$$|\phi_x(x,y)-s| \le |d(x,z) - d(y,z)-s| + 4C + 128\delta.$$
By the definition of $z$, $d(x,z)=d(x,z')+\lfloor s/2 \rfloor$, $d(y,z)=d(y,z')-\lfloor s/2 \rfloor$. Thus
$$ |d(x,z) - d(y,z)-s| \le |d(x,z')-d(y,z')| + 1.$$
Since $z'$ is an approximate midpoint, $|d(x,z')-d(y,z')| \le 1$. This proves the lemma with $B=2+4C+128\delta$.
\end{proof}

\begin{proof}[Proof of proposition \ref{horocycle theorem}]
Let $R$ be the ball of radius $B$ centered at the identity in $G$. Let $F(s)$ be the ball of radius $|s+1|$ centered at the identity in $G$. The previous lemma implies
$$\big\{ g \in G ~\big|~ \exists h \in \sH \textrm{ s.t. } h(g)=h(e)+s\big\}R \cup F(s) = G.$$

\end{proof}


\begin{defn}\label{def:sBr}
A block $B \in \sB$ is {\bf recurrent} if it is contained in a directed cycle. Here we are considering $\sB$ as a directed graph (definition \ref{defn:tsB}). Let $\sB_r \subset \sB$ denote the set of recurrent blocks. Observe that there exists a constant $C_r>0$ such that if $h \in \sH_0$ is arbitrary then $\Block(\Par^n(h))$ is recurrent for all $n \ge C_r$. Indeed, we can choose $C_r$ to be the number of nonrecurrent blocks. From this it follows that for any $\eta \in M_R(\sH_0)$ there exists some $B \in \sB_r$ such that $\eta(B)>0$.
\end{defn}

\begin{lem}
If $h\in\sH$, $g_1\in G$, $q\in \Z$ and for some $C_1>0$, $|h(g_1)-q| \le C_1$ then there exists $g_2 \in G$ satisfying
\begin{itemize}
\item $d(g_2,g_1)\le C_1+ C_2$ where $C_2$ is a universal constant,
\item $h(g_2) = q$,
\item $\Block(g_2^{-1}h)$ is recurrent.
\end{itemize}
\end{lem}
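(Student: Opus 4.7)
The plan is to produce $g_2$ by first finding a nearby $g_3 \in G$ with $h(g_3) = q + C_r$, where $C_r$ is the universal constant from Definition~\ref{def:sBr}, and then setting $g_2 := \Par_h^{C_r}(g_3)$. Automatically $h(g_2) = h(g_3) - C_r = q$ and $d(g_2, g_3) = C_r$. To see that $\Block(g_2^{-1}h)$ is recurrent, I first unravel the definition of the parent to obtain the identity $\Par(g^{-1}h) = \Par_h(g)^{-1}h$ for every $g \in G$ and every $h \in \sH$; iterating this gives $g_2^{-1}h = \Par^{C_r}(g_3^{-1}h)$. Since blocks are invariant under adding an integer constant to a horofunction and $\Par$ commutes with such shifts, I may normalize $g_3^{-1}h$ into $\sH_0$ without affecting either side. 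The observation in Definition~\ref{def:sBr} that $\Block(\Par^n(h'))$ is recurrent for every $h' \in \sH_0$ and every $n \ge C_r$ then forces $\Block(g_2^{-1}h)$ to be recurrent.

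Constructing $g_3$ splits into two cases. If $h(g_1) \ge q + C_r$, the distance-like property of $h$ immediately yields $g_3 \in h^{-1}(q+C_r)$ with $d(g_1, g_3) = h(g_1) - (q+C_r) \le C_1 - C_r$. If $h(g_1) < q + C_r$, a nearby point of strictly larger $h$-value is needed. Here my plan is to construct a long geodesic ray $\gamma : [-n, \infty) \to \Gamma$ with $\gamma(0)$ within a universal distance of $g_1$, with $\gamma(\infty) = \pi(h)$, and with $n$ chosen so that Proposition~\ref{prop4.4} applied to $\gamma$ forces $h(\gamma(-n)) \ge q + C_r$. Concretely, I take the $h$-gradient ray $r : [0, \infty) \to \Gamma$ with $r(0) = g_1$, reverse a long initial segment $r|_{[0, T]}$, and extend it to an infinite geodesic ray using the $K$-extension lemma at the start of this section; this produces an (essentially) bi-infinite geodesic passing within $O(K)$ of $g_1$ whose forward end is $\pi(h)$. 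Proposition~\ref{prop4.4}, combined with the inclusion $R_t \subset B(r(t), 16\delta)$, then gives $h(\gamma(-n)) \ge h(\gamma(0)) + n - 16\delta$, so choosing $n$ of order $(q+C_r) - h(g_1) + K + \delta$ produces the desired excess; a final application of distance-like descends to $g_3 \in h^{-1}(q+C_r)$ at additional cost $O(\delta)$.

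The main obstacle is the backward-extension step: ensuring that the reverse of an $h$-gradient ray based at $g_1$ admits a prescribed-length forward extension staying close to $g_1$. This is essentially the same flavour of argument as the coarse-perpendicular and coarse-isosceles-triangle lemmas from earlier in the section, combined with the $K$-extensibility of finite geodesics afforded by the regular language $\sL$ of geodesic words. Once this geometric preliminary is in hand, summing all the distance estimates yields $d(g_1, g_2) \le C_1 + C_2$ with $C_2 = 2C_r + 2K + O(\delta)$, depending only on $(G, A)$.
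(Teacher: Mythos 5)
Your proof is a genuinely different route from the paper's, and it is sound in outline, but it has a technical gap in the ``ascent'' case that you have not closed.

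\textbf{Comparison.} The paper's argument is purely combinatorial in the block graph. It first descends at most $C_r$ parent steps from $g_1$ to reach $f$ with $\Block(f^{-1}h)$ recurrent. If $q\le h(f)$, it keeps descending (and uses that applying $\Par$ preserves recurrence once a recurrent block is reached). If $q> h(f)$, it ascends \emph{through descendants of} $f$ by following the directed cycle on which $\Block(f^{-1}h)$ lives, producing $g_2$ with $\Par_h^m(g_2)=f$ in a recurrent block. Your plan instead reduces the whole problem to producing a nearby $g_3$ with $h(g_3)=q+C_r$ and then simply setting $g_2=\Par_h^{C_r}(g_3)$; recurrence of $\Block(g_2^{-1}h)$ then follows from the single observation in Definition~\ref{def:sBr} after normalizing $g_3^{-1}h$ into $\sH_0$ (which is legitimate since $\Block$ and $\Par$ both commute with adding a constant). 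This ``overshoot by $C_r$ and descend'' trick is a nice uniformization: it replaces the paper's case analysis at the level of block types by a case analysis purely on $h$-values, and it never needs to walk forward along a cycle in $\tsB$ to generate descendants. The price you pay is that when $h(g_1)<q+C_r$ you must construct a nearby point of \emph{larger} $h$-value by geometry rather than by combinatorics.

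\textbf{The gap.} In that case you propose to reverse a long initial segment $r|_{[0,T]}$ of the $h$-gradient ray from $g_1$, extend the reversed segment to a geodesic ray via the $K$-extension lemma, and declare the result an ``(essentially) bi-infinite geodesic whose forward end is $\pi(h)$'' to which Proposition~\ref{prop4.4} applies. As written this does not produce a geodesic ray $\gamma:[-n,\infty)\to\Gamma$ with $\gamma(\infty)=\pi(h)$: the $K$-extension ray agrees with $r$ only on a middle portion and then heads off in an a priori unrelated direction, while the concatenation of the reversed extension with $r$ is merely a local geodesic whose global geodesicity you have not justified. Proposition~\ref{prop4.4} is stated for genuine geodesic rays converging to $\pi(h)$, so an argument is required. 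The standard fix in this setting is to take any $\xi\in\partial\Gamma$ with $\xi\ne\pi(h)$, let $\gamma$ be a bi-infinite geodesic from $\xi$ to $\pi(h)$, and use $\delta$-thinness of the ideal triangle with vertices $\xi,\pi(h),g_1$ (together with the gradient ray $r$ and any ray from $g_1$ to $\xi$) to see that $\gamma$ passes within $O(\delta)$ of $g_1$; this gives the needed $\gamma:[-n,\infty)\to\Gamma$ for all $n$ at once. Alternatively one could invoke the lemma on coarse isosceles triangles already proved in this section. With such a $\gamma$ in hand, your application of Proposition~\ref{prop4.4} and the final descent via the distance-like property do give a $g_3$ with $h(g_3)=q+C_r$; the resulting constant is $C_2 = 2C_r + O(K+\delta)$ rather than your stated $2C_r+2K+O(\delta)$, but that is cosmetic since both are universal. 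You should also record that Proposition~\ref{prop4.4} requires $n>16\delta$, so when $q+C_r-h(g_1)$ is small $n$ must still be taken at least of order $\delta$.
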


\begin{proof}
Set $f=\Par_h^n(g_1)$ where $n \ge 0$ is chosen so that $\Block(f^{-1}h)$ is recurrent and $n \le C_r$ is universally bounded. 

Recurrence implies that $\Block(\Par^m_h(f)^{-1}h)$ is recurrent for all $m\ge 0$. So if $q \le h(f)$ then we may set $g_2 = \Par^m_h(f)$ where $m=h(f)-q$. In this case $d(g_2,g_1)\le |h(g_1)-q|\le C_1$. So we can set $C_2=0$.

Suppose that $q > h(f)$. Recurrence implies that for every $m\ge 0$ there exists $g_2 \in G$ such that $\Par^m_h(g_2)=f$ and $\Block(g_2^{-1}h)$ is recurrent. So set $m=q-h(f)$. Then $h(g_2)=q$ and 
$$d(g_2,g_1) \le d(g_2,f) + d(f,g_1) = m + n \le |q-h(f)|+C_r \le |h(g_1)-q| +2C_r.$$
So we are done.
\end{proof}

\begin{defn}
For $\eta \in M_R(\sH_0)$, $q \in \Z$ and $B,C \in \sB_r$, let 
$$G_\eta(q,B,C)=\{g \in G\, |  \, \eta(g\sH_q \cap B \cap gC)>0\}.$$
Let $G_\eta(q)=\cup_{B,C} \, G_\eta(q,B,C)$.
\end{defn}

\begin{lem}
There exist finite sets $L,R, F(q) \subset G$ satisfying the following. For all $\eta \in M_R(\sH_0)$ such that $\alpha\eta=\eta$,
$$LG_\eta(q)R \cup F(q) = G.$$
$L$ and $R$ do not depend on $\eta$ or $q$.
\end{lem}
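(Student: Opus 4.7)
The plan is to combine Proposition~\ref{horocycle theorem} with the quasiconformality of $\pi_*\eta$ and the (unnamed) lemma preceding Definition~\ref{def:sBr}. First, apply Proposition~\ref{horocycle theorem} with $s=-q$: since the condition $h(g)=h(e)-q$ is invariant under replacing $h$ by $h-h(e)\in\sH_0$, this yields a universal finite set $R\subset G$ and a finite $F_0(q)\subset G$ such that
$$ H_{-q}\cdot R\;\cup\;F_0(q)=G,\qquad H_{-q}:=\{g\in G:\exists h\in\sH_0,\ h(g)=-q\}. $$
Hence it suffices to produce a universal finite $L\subset G$ and a finite $F_1(q)$ with $H_{-q}\subseteq L\cdot G_\eta(q)\cup F_1(q)$, and to take $F(q):=F_1(q)R\cup F_0(q)$.

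Fix $g_0\in H_{-q}$. The horofunction $h$ witnessing $g_0\in H_{-q}$ need not lie in $\mathrm{supp}(\eta)$, and the heart of the argument is to replace it by one that does, at the cost of a universal bounded error in the horocycle value. Inspecting the Busemann construction used in the proof of Proposition~\ref{horocycle theorem}, the point $c:=\pi(h)\in\partial\Gamma$ can be taken in an open shadow $U\subset\partial\Gamma$ of a ball of universal radius around the point $r(t)$ built in the Coarse Perpendicular / Coarse Isosceles lemmas; and \emph{every} $\xi\in U$ yields a Busemann horofunction $h_\xi$ with $|h_\xi(g_0)+q|\leq B$ for a universal constant $B$. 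By Lemma~\ref{lem:quasiconformal}, $\pi_*\eta$ is $G$-quasiconformal; by Theorem~\ref{thm:ps} such measures are equivalent to the Patterson--Sullivan measure and in particular have full support on $\partial\Gamma$, so $\pi_*\eta(U)>0$ and hence $\eta(\pi^{-1}(U))>0$. The $\alpha$-invariance of $\eta$ together with the Perron--Frobenius analysis in Theorem~\ref{thm:eigenvalue} forces $\eta$ to be concentrated on recurrent blocks, so I can choose $h'\in\mathrm{supp}(\eta)\cap\pi^{-1}(U)$ with $\Block(h')\in\sB_r$; Theorem~\ref{thm:CP1} then yields $|h'(g_0)+q|\leq B+64\delta=:C_1$.

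Applying the unnamed lemma immediately before Definition~\ref{def:sBr} to $(h',g_0)$ with the role of ``$q$'' there being $-q$, I obtain $g'\in G$ with $d(g',g_0)\leq C_1+C_2$, $h'(g')=-q$, and $\Block((g')^{-1}h')\in\sB_r$. Setting $l:=g_0(g')^{-1}$ places $l$ in the universal ball $L$ of radius $C_1+C_2$ around $e$. To conclude $g'\in G_\eta(q)$, observe that $\{h''\in\sH_0:h''(g')=-q,\ \Block(h'')=\Block(h'),\ \Block((g')^{-1}h'')=\Block((g')^{-1}h')\}$ is, up to bounded extra data about nearby ancestors, a finite-depth $\Par$-cylinder around $h'$; because $h'\in\mathrm{supp}(\eta)$ and both blocks are recurrent, this set has positive $\eta$-measure, witnessing $g'\in G_\eta(q)$. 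Combining with the horocycle reduction completes the argument.

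The main obstacle is the second paragraph: extracting from the proof of Proposition~\ref{horocycle theorem} that the admissible boundary points $c$ form an \emph{open} subset of $\partial\Gamma$ (rather than a single point) so that quasiconformality of $\pi_*\eta$ can be brought to bear to produce $h'\in\mathrm{supp}(\eta)$ with bounded horocycle error. Once that open-set / density fact is in hand, the rest is a direct assembly of the previous lemma, Theorem~\ref{thm:CP1}, and the Perron--Frobenius consequence of $\alpha$-invariance.
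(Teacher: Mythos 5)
Your overall architecture (reduce via Proposition~\ref{horocycle theorem}, pass to $\mathrm{supp}(\eta)$ via quasiconformality, perturb with the unnamed lemma, conclude via a clopen set containing a support point) matches the paper's, but there are two genuine gaps, one of which you flag yourself and one of which you don't.

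First, the ``open set $U$ of admissible boundary points'' that you worry about is not needed. The paper's route is simpler: $\pi_*\eta$ being quasiconformal is, by Theorem~\ref{thm:ps}, equivalent to Patterson--Sullivan measure and hence has full support on $\partial\Gamma$. Since $\sH_0$ is compact and $\pi$ is continuous, $\pi(\mathrm{supp}(\eta))$ is a closed subset of $\partial\Gamma$ carrying $\pi_*\eta$; full support forces $\pi(\mathrm{supp}(\eta))=\partial\Gamma$. So one can take $h_2\in\mathrm{supp}(\eta)$ with $\pi(h_2)=\pi(h_1)$ for the \emph{single} boundary point produced by the Busemann construction; no openness is required.

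Second --- and this is the more serious gap --- your assertion that ``$\alpha$-invariance together with the Perron--Frobenius analysis forces $\eta$ to be concentrated on recurrent blocks'' is not justified by anything in the paper and appears to be false. What $\alpha$-invariance gives (via Lemma~\ref{lem:matrix} and Theorem~\ref{thm:eigenvalue}) is that $\veta_0$ is a nonnegative eigenvector of $M$ with eigenvalue $e^{e(\Gamma)}$. But a Perron eigenvector can place positive mass on transient vertices that receive edges \emph{from} the recurrent part: e.g.\ with $\sB=\{B_1,B_2,B_3\}$, $B_1,B_2$ forming a 2-cycle with self-loops and an edge $B_1\to B_3$ only, the eigenvector with eigenvalue $2$ is $(1,1,\tfrac12)$, which has mass on the transient block $B_3$. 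The observation in Definition~\ref{def:sBr} only controls blocks in the direction of $\Par$ (towards ancestors), not towards children, so it does not rule out this configuration. The paper sidesteps the issue entirely: rather than demanding $\Block(h_2)$ be recurrent, it applies the unnamed lemma a \emph{second} time (with $g_1=e$, $q=0$) to produce $e_1\in G$ with $d(e,e_1)$ universally bounded, $h_2(e_1)=0$, and $\Block(e_1^{-1}h_2)$ recurrent, and then changes basepoint to $h_3:=e_1^{-1}h_2$. Since $h_3\in\sH_0$ and $\mathrm{supp}(\eta)$ is $R$-saturated, $h_3\in\mathrm{supp}(\eta)$, and now $\Block(h_3)$ \emph{is} recurrent by construction. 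This basepoint change, together with $g_3=e_1^{-1}g_2$, is the step your proof is missing, and without it the conclusion $g'\in G_\eta(q)$ (which requires \emph{both} blocks in $\sB_r$) does not follow.
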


\begin{proof}
Let $F(q)$ be as in proposition \ref{horocycle theorem}. Let $g_0 \in G-F(q)$ be arbitrary. The same proposition implies that there exists $h_1 \in \sH_0$ and $g_1\in G$ such that $d(g_0,g_1)\le C_0$ (where $C_0$ is a universal constant) and $h_1(g_1)=-q$.

 By lemma \ref{lem:quasiconformal}, $\pi_*(\eta)$ is quasiconformal. Theorem \ref{thm:ps} now implies that the support of $\pi_*(\eta)$ is all of $\partial \Gamma$. In particular, there exists $h_2 \in  \support(\eta)$ such that $\pi(h_2)=\pi(h_1)$. By theorem \ref{thm:CP1}, $||h_2-h_1||_\infty \le C_1$ where $C_1$ is a universal constant. Since $h_1(g_1)=-q$ the previous lemma implies that there exists $g_2 \in G$ such that
\begin{itemize}
\item $d(g_2,g_1) \le C_1+C_2$, where $C_2$ is a universal constant,
\item $h_2(g_2)=-q$ and
\item $\Block(g_2^{-1}h_2)$ is recurrent.
\end{itemize}
It follows from the previous lemma that there exists $e_1 \in G$ such that 
\begin{itemize}
\item $d(e,e_1) \le C_2$,
\item $h_2(e_1)=0$ and
\item $\Block(e_1^{-1}h_2)$ is recurrent.
\end{itemize}
Since $h_2(e_1)=0$ it follows that $h_3:=e_1^{-1}h_2$ is in the support of $\eta$. Set $g_3:=e_1^{-1}g_2$. Observe that:
\begin{itemize}
\item $h_3 \in \support(\eta)$,
\item $\Block(h_3)$ is recurrent,
\item $\Block(g_3^{-1}h_3) = \Block(g_2^{-1}h_2)$ is recurrent,
\item $h_3(g_3)=h_2(g_2)=-q$ and
\item there exist elements $l,r \in G$ such that $lg_3r=g_0$ and $d(l,e),d(r,e) \le C_3$ where $C_3 \ge 0$ is a universal constant.
\end{itemize}
Consider the set $g_3 \sH_q \cap \Block(h_3) \cap g_3\Block(g_3^{-1}h_3)$. It contains $h_3$ and so, it has nontrivial intersection with $\support(\eta)$. Since it is clopen, this implies $\eta(g_3 \sH_q \cap \Block(h_3) \cap g_3\Block(h_3))>0$. Thus $g_3 \in G_\eta(q)$. Since $lg_3r=g_0$ and $g_0$ is arbitrary, this implies the lemma: $L$ and $R$ are the set of all elements in $G$ with distance at most $C_3$ from the identity element.

\end{proof}



\begin{proof}[Proof of lemma \ref{lem:key}]
 Let $N>0$ be larger than the product $|L||R||\sB_r|^2$ where $L,R$ are as in the previous lemma and $\sB_r$ is as in definition \ref{def:sBr}. Let $f \in G - \cup_{q=0}^N F(q)$. By the previous lemma $LG_\eta(q)R \cup F(q) = G$ for all $q$. Therefore, for every $q$ with $0\le q \le N$, there exists elements $l_q \in L^{-1}$, $r_q \in R^{-1}$ and $B_q, C_q \in \sB_r$  such that $l_qfr_q \in G_\eta(q,B_q,C_q)$. 

By the pigeonhole principle, there must exist integers $s,t$ such that $0 \le s <t\le N$ such that $l_s=l_t$, $r_s=r_t$, $B_s=B_t$ and $C_s=C_t$. Let $\bg= l_s f r_s=l_t f r_t$. Then $\bg \in G_\eta(s,B_s,C_s) \cap G_\eta(t,B_t,C_t)$ which implies the lemma.

\end{proof}

\section{Proof of theorem \ref{thm:app2} and corollary \ref{cor:app}}\label{sec:app}

\begin{proof}[Proof of theorem \ref{thm:app2}]

We start with a number of reductions. Since $X$ is finite and $G \curvearrowright (X,\mu)$ is ergodic, it follows that $X=G/H$ for some finite index subgroup $H<G$ and $\mu$ is the uniform measure. The first step is to reduce to the case in which $H$ is normal.

Suppose that the theorem is true whenever $X=G/N$ where $N$ is a finite-index normal subgroup of $G$. Let $H<G$ be an arbitrary finite index subgroup. Then there exists a finite index normal subgroup $N<G$ such that $N<H$ (for example, let $N$ be the intersection of all the conjugates of $H$ in $G$). Since $G/N$ projects onto $G/H$ in a $G$-equivariant manner, if the theorem is true when $X=G/N$, it must be true when $X=G/H$. So it suffices to assume that $X=G/N$ where $N\vartriangleleft G$.

$G$ admits an action on $\Z^G \times G/N$ {\it on the right} by 
$$\Phi(g)(h,xN) = (h, xNg) = (h,xgN).$$
This action commutes with the diagonal left action (i.e., the action $g(h,xN) = (gh,gxN)$ on $\Z^G \times G/N$). Hence, it pushes forward to an action on $M_R(\sH_0 \times G/N)$ as well as $M(\Z^G \times G/N)$. This action is weak* continuous and linear. 

Suppose that the theorem is true whenever $x=N \in G/N$, i.e., every subsequential limit point of the sequence
$$ \Big\{\frac{1}{|K|}\sum_{k \in K} u_{n,kN}\Big\}$$ is of the form $\eta \times \mu$ for some probability measure $\eta \in M(\Z^G)$. For $g \in G$, $\Phi(g)_*$ is weak* continuous. So the above implies that every subsequential limit point of the sequence
$$ \Big\{\frac{1}{|K|}\sum_{k \in K} \Phi(g_*)u_{n,kN}\Big\} = \Big\{\frac{1}{|K|}\sum_{k \in K} u_{n,kgN}\Big\} $$ 
is of the form $\Phi(g)_*(\eta \times \mu)=\eta\times \mu$ for some probability measure $\eta \in M(\Z^G)$. So the theorem is true for $gN \in G/N$ too. Since $g \in G$ is arbitrary, it now suffices to show that the theorem is true when $x=N\in G/N$.

Let $\omega \in M_R(\sH_0 \times G/N)$. The measure 
$${\bar \omega} := \frac{1}{|G/N|} \sum_{gN \in G/N} \Phi(g)_*\omega$$
is a product measure ${\bar \omega} = \eta \times \mu$ for some $\eta \in M_R(\sH_0)$. So $\omega$ is absolutely continuous to $\eta \times \mu$ for some $\eta \in M_R(\sH_0)$. 

(Incidentally, since $\omega$ is arbitrary, theorem \ref{thm1} and theorem \ref{thm:main} now imply that there is a bound on the number of ergodic measures in $M_R(\sH_0 \times G/N)$ that depends only on $(G,A)$ and not on $N$.)

Let $\eta \in \sH_0$ be ergodic. By theorem \ref{thm:main}, $\eta \times \mu$ splits into a finite number of ergodic components: $\eta \times \mu = t_1\omega_1 + ... + t_q \omega_q$ where $q \le Q=Q(G,A)$.

 Let $G_0<G$ be the intersection of all subgroups of index at most $Q!$. Since $Q$ depends only on $(G,A)$, $G_0$ does not depend on $N$. Since $\Phi(g)_* (\eta \times \mu) = \eta \times \mu$, the $\Phi_*$-action of $G$ permutes the ergodic components of $\eta\times \mu$. Thus $\Phi_*$ gives a homomorphism from $G$ into the permutation group on $\{\omega_1,....,\omega_q\}$. Let $H$ denote the kernel of this action. Since $|G/H| \le q!$, it follows that $G_0 < H$. Since every $\omega \in M_R(\sH_0\times X)$ is absolutely continuous to a product measure, it follows that for all $g \in G_0$, $\Phi(g)_* \omega = \omega$. Thus
$${\bar \omega} := \frac{1}{|K|} \sum_{k \in K} \Phi(k)_*\omega = \eta \times \mu$$
for some $\eta \in M_R(\sH_0)$.

Because the $\Phi$-action of $G$ is weak* continuous, it follows that any subsequential limit of
$$\Big\{\frac{1}{|K|} \sum_{k \in K} \Phi(k)_* u_{n,N}\Big\} = \Big\{\frac{1}{|K|} \sum_{k \in K}  u_{n,kN}\Big\}$$
is of the form $\eta \times \mu$ for some $\eta \in M_R(\sH_0)$. Here we have used the fact (discussed in lemma \ref{lem:existence}) that every subsequential limit of $u_n$ is contained in $M_R(\sH_0)$. This concludes the proof.
\end{proof}









\begin{proof}[Proof of corollary \ref{cor:app}]
Let $f:{\bar G} \to \R$ be continuous. Let $G=H_0>H_1>...>e$ be a descending sequence of finite index subgroups of $G$ such that if $N<G$ is any finite index subgroup then $H_i < N$ for some $i$. For example, we could choose $H_i$ to be the intersection of all subgroups of $G$ of index at most $i$.

Let $f_i: {\bar G} \to \R$ be the conditional expectation of $f$ with respect to the $\sigma$-algebra on ${\bar G}$ obtained from the projection ${\bar G} \to G/H_i$ by pulling back the power set of $G/H_i$. By the martingale convergence theorem $\{f_i\}_{i=1}^\infty$ converges in the sup-norm to $f$ as $i\to\infty$.

By theorem \ref{thm:app}, for all $i\ge 0$, 
\begin{eqnarray*}
\int f_i d\mu &=& \lim_{n \to \infty} \frac{1}{|K||S(e,n)|} \sum_{g \in S(e,n)}  \sum_{k \in K} f_i(gkx).
\end{eqnarray*}

Let $\epsilon>0$. Then there exists $N$ such that for all $i>N$, $||f-f_i||_\infty < \epsilon$ and $|\int f_id\mu - \int f d\mu| <\epsilon$. The above equation now implies 
\begin{eqnarray*}
\Big|\int f d\mu -\lim_{n\to \infty} \frac{1}{|K||S(e,n)|} \sum_{g \in S(e,n)}  \sum_{k \in K} f(gkx) \Big| \le 2\epsilon.
\end{eqnarray*}
Since $\epsilon>0$ is arbitrary, this implies the corollary.
\end{proof}

\section{Conclusion}\label{sec:concl}



Given a discrete group $G$, let  $2^G$ denote the set of all subsets of $G$ with the product topology. It is a compact metrizable space (in fact, homeomorphic to a Cantor set). Let $2^G_e \subset 2^G$ denote the set of subsets of $G$ that contain the identity element $e \in G$. $G$ acts on $2^G$ in the usual way: if $S \subset G$ and $g \in G$ then $gS=\{gs |\, s \in S\}$. This action induces an equivalence relation on $2^G$. Let $R$ denote the restriction of this relation to $2^G_e$. 

Let $\sS:=M_R(2^G_e)$ denote the space of $R$-invariant Borel probability measures on $2^G_e$. This space generalizes the set of subgroups of $G$: if $H$ is a subset of $G$, then the Dirac measure $\delta_H$ concentrated at $H$ is in $\sS$ iff $H$ is a subgroup of $G$. It is interesting to think of measures in $\sS$ as being ``like subgroups''. For example, if $G \curvearrowright (X,\mu)$ and $\eta \in \sS$ then the ``induced action'' of $\eta$ on $X$ is the measure space $(2^G_e \times X, \eta \times \mu)$ with the equivalence relation induced by the diagonal action of $G$ on $2^G\times X$.

For another example, recall that a {\bf horosphere} of a word hyperbolic group $G$ is a level set of a horofunction. If $\eta \in \sS$ is concentrated on the space $HS$ of horospheres that contain the identity element, then it is interesting to speculate that the relationship between $\eta$ and $G$ should be analogous to the relationship between a maximal unipotent subgroup of $SO(n,1)$ and $SO(n,1)$. For example, it can be shown that the leaves of $(HS,\eta)$ have polynomial growth with respect to a very natural leafwise metric (cf. [Ad94]). The obvious map from $\sH_0$ to $HS$ is uniformly finite-to-1 (by theorem \ref{thm:CP2}) and relation-preserving. Thus theorems \ref{thm1} and \ref{thm:main} apply to $M_R(HS)$ in place of $M_R(\sH_0)$. 

For a third example, if $G$ is a 1-ended word hyperbolic group then, a well-known question (attributed to Gromov) asks, does $G$ have a subgroup $H$ isomorphic to the fundamental group of a closed surface of genus at least 2? A slight variation asks, does $G$ have a {\it quasiconvex} subgroup $H$ isomorphic to the fundamental group of a closed surface of genus at least 2? We can weaken this question to: is there a measure $\eta \in \sS$ such that each subset $S \in \support(\eta)$ is quasi-isometric to the hyperbolic plane?


\begin{thebibliography}{99}

\bibitem[Ad94]{Ad94} Adams, S. \textit{ Boundary amenability for word hyperbolic groups and an application to smooth dynamics of simple groups}.  Topology  33  (1994),  no. 4, 765--783. 


\bibitem[AL07]{AL07} Aldous, D., Lyons, R., \textit{Processes on Unimodular Random Networks}, Electr. J. Probl., vol 12 (2007), 1454-1508.

\bibitem[BLPS99]{BLPS99} Benjamini, I.; Lyons, R.; Peres, Y.; Schramm, O.
\textit{Group-invariant percolation on graphs}. Geom. Funct. Anal. 9 (1999), no. 1, 29--66.
 
\bibitem[BM77]{BM77} Bowen, R.; Marcus, B., \textit{Unique ergodicity for horocycle foliations}, Israel J. Math. {\bf 26} (1977), no. 1, 43--67. 

\bibitem[BM02]{BM02} Burger, M.; Monod, N. \textit{Continuous bounded cohomology and applications to rigidity theory.}  Geom. Funct. Anal.  12  (2002),  no. 2, 219--280.


\bibitem[Bu02]{Bu02} Bufetov, A. I. \textit{Convergence of spherical averages for actions of free groups}.  Ann. of Math. (2)  {\bf 155}  (2002),  no. 3, 929--944.


\bibitem[Co93]{Co93} Coornaert, M., \textit{Mesures de Patterson-Sullivan sur le Bord d'un Espace hyperbolique au sens de Gromov}, Pacific J. of Math. {\bf 159}, No.2, pp. 241-270, (1993).

\bibitem[CP01]{CP01} Coornaert, M., Papadopoulos, A., \textit{Horofunctions and Symbolic Dynamics on Gromov Hyperbolic Groups}, Glasg. Math. J.  43  (2001),  no. 3, 425--456.

\bibitem[CP02]{CP02} Coornaert, M.; Papadopoulos, A.,\textit{ Symbolic coding for the geodesic flow associated to a word hyperbolic group},  Manuscripta Math.  109  (2002),  no. 4, 465--492.

\bibitem[ECHLPT92]{ECHLPT92} Epstein, David B. A.; Cannon, J. W.; Holt, D. F.; Levy, S. V. F.; Paterson, M.S.; Thurston, W. P. \textit{Word processing in groups}. Jones and Bartlett Publishers, Boston, MA, 1992. xii+330 pp. 


\bibitem[FM77]{FM77} Feldman, J.; Moore, C.C. \textit{Ergodic equivalence relations and von Neumann algebras I}, Trans. Amer. Math. Soc., 234, 289-- 324, (1977).


\bibitem[FN98]{FN98} Fujiwara, K.; Nevo, A. \textit{Maximal and pointwise ergodic theorems for word-hyperbolic groups}.  Ergodic Theory Dynam. Systems  18  (1998),  no. 4, 843--858.


\bibitem[GdlH90]{GdlH} Ghys, E., de la Harpe, P. (eds.), \textit{Sur les groupes hyperboliques d'apres Mikhael Gromov}, Progress in Mathematics, vol. 83. Birkhauser Boston Inc., Boston, MA, (1990), Papers from the Swiss Seminar on Hyperbolic Groups held in Bern, 1988. 




\bibitem[GN07]{GN07} Gorodnik, A., Nevo A. \textit{ The ergodic theory of lattice subgroups}. math.DS/0605596 


\bibitem[Gr87]{Gr} Gromov, M. \textit{Hyperbolic Groups}, Essays in Group Theory, Math. Sci. Res. Inst. Publ. vol. 8, Springer, New York, pp. 75-263, (1987).

\bibitem[Gu69]{Gu} Guivarc' h, Y.\textit{ G\'en\'eralisation d'un th\'eor\`eme de von Neumann.} C. R. Acad. Sci. Paris Sér. A-B  268  (1969) A1020--A1023.

\bibitem[Ha97]{Ha97} H\"aggstr\"om, O. \textit{Infinite clusters in dependent automorphism invariant percolation on trees}. Ann. Probab. 25 (1997), no. 3, 1423--1436. 



\bibitem[Ka00]{Ka00} Kaimanovich, V. A. \textit{The Poisson formula for groups with hyperbolic properties}.  Ann. of Math. (2)  152  (2000),  no. 3, 659--692. 

\bibitem[Ka03]{Ka03} Kaimanovich, V. A. \textit{Double ergodicity of the Poisson boundary and applications to bounded cohomology}.  Geom. Funct. Anal.  13  (2003),  no. 4, 852--861.





\bibitem[Mo01]{Mo01} Monod, N. \textit{ Continuous bounded cohomology of locally compact groups}. Lecture Notes in Mathematics, 1758. Springer-Verlag, Berlin, 2001. x+214 pp.

\bibitem[Ne06]{Ne06} Nevo, A. \textit{Pointwise ergodic theorems for actions of groups}.  Handbook of dynamical systems. Vol. 1B,  871--982, Elsevier B. V., Amsterdam, (2006).

\bibitem[Ne07]{Ne07} Nevo, A. \textit{On the ball averaging problem in ergodic theory}. In preparation.

\bibitem[NS94]{NS94} Nevo, A.; Stein, E. M. \textit{A generalization of Birkhoff's pointwise ergodic theorem},  Acta Math.  {\bf 173}  (1994),  no. 1, 135--154. 

\bibitem[Zi84]{Zi84} R.J. Zimmer, \textit{Ergodic Theory and
  Semisimple Lie Groups}. Birkh\"auser, (1984).


\end{thebibliography}
\end{document}